\documentclass[11pt,a4paper]{article}
\usepackage[utf8]{inputenc}
\usepackage{amsmath,amssymb,latexsym,color,enumerate,amsthm,mathtools}
\usepackage{indentfirst}
\usepackage[mathscr]{eucal}
\usepackage[colorlinks,linkcolor=blue,anchorcolor=green,citecolor=magenta]{hyperref}
\usepackage{setspace}
\usepackage{authblk}
\allowdisplaybreaks

\newtheorem{thm}{Theorem}[section]
\newtheorem{prop}[thm]{Proposition}
\newtheorem{lemma}[thm]{Lemma}
\newtheorem{cor}[thm]{Corollary}
\newtheorem{conj}[thm]{Conjecture}
\newtheorem{obs}[thm]{Observation}
\newtheorem{rema}{Remark}
\newtheorem{defin}{Definition}[section]
\newenvironment{definition}{\begin{defin}\rm}{\end{defin}}

\newcommand{\Fq}{\mathbb F_q}
\newcommand{\calA}{\mathcal A}
\newcommand{\calB}{\mathcal B}

\newcommand{\calF}{\mathcal F}
\newcommand{\calG}{\mathcal G}
\newcommand{\calH}{\mathcal H}
\newcommand{\calL}{\mathcal L}
\newcommand{\calS}{\mathcal S}
\newcommand{\calT}{\mathcal T}
\newcommand{\qbinom}[2]{\genfrac{[}{]}{0pt}{}{#1}{#2}_{q}}
\newcommand{\vbinom}[2]{\genfrac{[}{]}{0pt}{}{#1}{#2}}

\begin{document}
\renewcommand{\baselinestretch}{1.3}

\title{Projective Ore-Degree Conditions for Intersection Theorems
in Vector Spaces}

\author[1]{Mengyu Cao\thanks{E-mail: \texttt{myucao@ruc.edu.cn}. M. Cao is supported by the National Natural Science Foundation of China (12301431) and Beijing Natural Science Foundation (1262010).}}
\author[2]{Mei Lu\thanks{E-mail: \texttt{lumei@tsinghua.edu.cn}. M. Lu is supported by the National Natural Science Foundation of China (Grant 12571372) and Beijing Natural Science Foundation (Grant 1262010).}}
\author[2]{Xuyang Yan\thanks{E-mail: \texttt{yanxuyan24@mails.tsinghua.edu.cn}}}
\author[2]{Haixiang Zhang\thanks{Corresponding author. E-mail: \texttt{zhang-hx22@mails.tsinghua.edu.cn.}}}

\affil[1]{\small Institute for Mathematical Sciences, Renmin University of China, Beijing 100086, China}
\affil[2]{\small Department of Mathematical Sciences, Tsinghua University, Beijing 100084, China}
\date{}
\maketitle

\begin{abstract}
Let $V$ be an $n$-dimensional vector space over the finite field $\Fq$, and
let $\calF\subsetneq\vbinom{V}{k}$.  The \emph{projective Ore-degree} of
$\calF$ is the minimum, over all $k$-subspaces $S\notin\calF$, of the sum
of the $\calF$-degrees of the projective points contained in $S$.  We prove
sharp projective Ore analogues of the vector-space
Erd\H{o}s--Ko--Rado and Hilton--Milner theorems.  The Ore--Erd\H{o}s--Ko--Rado
theorem holds for $n\ge2k+1$, with equality only for a full point-star.
For nontrivial intersecting families, we determine the sharp
Ore--Hilton--Milner threshold, together with the complete equality
classification, when $q\ge3$ and $n\ge2k+1$, or when $q\ge2$ and
$n\ge2k+2$.  We further determine a sharp projective Ore-degree threshold
forcing a direct-sum matching of size $s$ when $s\ge3$ and
$n\ge(2s-1)k-s+4$, and derive a multicolour Ramsey consequence.

\medskip
\noindent {\em MSC classification:} 05D05, 05C65

\noindent {\em Key words:} Ore-degree, direct-sum matching,
Erd\H{o}s--Ko--Rado theorem, Hilton--Milner theorem
\end{abstract}

\section{Introduction}\label{sec:intro}

\subsection{Background and motivation}

Intersection theorems form a central part of extremal set theory and finite
geometry.  The Erd\H{o}s--Ko--Rado theorem determines the maximum size of an
intersecting family of uniform subsets, while the Hilton--Milner theorem
determines the maximum under the additional requirement that the family have
no common element \cite{ErdosKoRado,HiltonMilner}.  Their vector-space
analogues replace subsets by subspaces and binomial coefficients by Gaussian
coefficients.  Foundational results of Hsieh and Frankl--Wilson established
the vector-space Erd\H{o}s--Ko--Rado theorem, and subsequent work determined
the vector-space Hilton--Milner extremum and the large nontrivial
$t$-intersecting families
\cite{Hsieh,FranklWilson,BlokhuisEtAl,WangXuZhang,CaoLvWangZhou}.
Recently, Ihringer and Kupavskii developed subspace spreadness and a
peeling--simplification procedure which gives a low-dimensional governing
structure for large $t$-intersecting families already in the natural range
$n\ge2k+1$ \cite{IhringerKupavskii}.  Their result provides an important
framework for studying the critical range of sharp intersection and
stability problems.
For general background on Erd\H{o}s--Ko--Rado theory and its algebraic
formulations, see \cite{GodsilMeagher}.

Classical intersection theorems control the total size of a family.  Degree
conditions instead measure how the family is distributed among the local
objects of the host space.  In uniform set systems, minimum-degree versions
of the Erd\H{o}s--Ko--Rado and Hilton--Milner theorems were developed in
\cite{HuangZhao,FranklHanHaoYi}.  In finite vector spaces, Shan and Zhou
proved a sharp $d$-degree Erd\H{o}s--Ko--Rado theorem
\cite{ShanZhouDegree}.  Ore-type hypotheses provide a different local-to-global
statistic: rather than controlling every individual degree, they control the
sum of degrees over each nonedge.  This point of view originates in Ore's
degree-sum criterion for Hamilton cycles in graphs \cite{Ore}.  Balogh,
Palmer and Raeisi recently introduced the corresponding parameter for
uniform hypergraphs and proved Ore-degree versions of the
Erd\H{o}s--Ko--Rado and Hilton--Milner theorems, as well as a matching
theorem \cite{BaloghPalmerRaeisi}.

The purpose of the present paper is to develop this Ore-degree programme on a
Grassmannian.  The passage from sets to subspaces involves two genuine
geometric choices.  First, the local objects should be projective points,
rather than nonzero vectors, so that degrees are invariant under scalar
multiplication and each $k$-subspace contains the same intrinsic collection of
local objects.  Second, for three or more subspaces, pairwise trivial
intersection does not imply rank additivity.  The appropriate analogue of a
set matching is therefore a \emph{direct-sum matching}.  This is also the
notion used in the recent vector-space Erd\H{o}s matching problem of Feng,
Shangguan, Yang and Zhang \cite{FengShangguanYangZhang}.

Our main contribution is an exact incidence inequality that converts a lower
bound on projective Ore-degree into a lower bound on the size of the family.
Combining it with the large-family classification and covering-number
analysis of Cao, Lv, Wang and Zhou gives the sharp Ore--Hilton--Milner
threshold when $q\ge3$ and $n\ge2k+1$, or when $q\ge2$ and $n\ge2k+2$.
For $k\ge4$, the two boundary layers follow from sharper comparisons with
the covering-number bounds underlying their classification; at the single
exceptional comparison $(q,k,n)=(3,4,9)$, an additional averaging argument
over subspaces disjoint from a fixed member closes the gap.  The case $k=3$
follows from De Boeck's classification of large intersecting families of
planes \cite{DeBoeck,IhringerKupavskii}; see
Appendix~\ref{app:k3-classification} for the explicit classification.  We also determine
every equality family: the outer part of the canonical construction is rigid,
while its members contained in the distinguished $(k+1)$-space may be chosen
freely subject only to nontriviality; for $k=3$ there are two further families
arising from the exceptional three-dimensional construction.
The only case left at the natural dimension boundary is
$q=2$ and $n=2k+1$.  The structure theorem of Ihringer and Kupavskii strongly
suggests that the same conclusion should hold there, but its present
remainder estimate does not by itself give the exact Ore threshold.  We record
this conjectural binary extension, together with the optimality of the
dimension range, in Remark~\ref{rema:hm-binary-boundary}.  We also obtain a
sharp large-dimension theorem for direct-sum matchings and a multicolour
Ramsey consequence.

\subsection{Definitions and intersection results}

Throughout the paper, $q$ is a prime power, $V$ is an $n$-dimensional vector
space over $\Fq$, and
\[
 \vbinom{V}{k}=\{F\le V:\dim F=k\}
\]
is the Grassmannian of $k$-subspaces of $V$.  Its cardinality is the Gaussian
coefficient
\[
 \qbinom{n}{k}=\prod_{i=0}^{k-1}\frac{q^{n-i}-1}{q^{k-i}-1}.
\]
We use the convention $\qbinom{a}{b}=0$ if $b<0$ or $b>a$.

A family $\calF\subseteq\vbinom{V}{k}$ is \emph{intersecting} if
$\dim(F\cap F')\ge1$ for all $F,F'\in\calF$.  It is \emph{trivial} if all
its members contain a common $1$-subspace and \emph{nontrivial} otherwise.

The elements of $\vbinom{V}{1}$ are the projective points.  For
$x\in\vbinom{V}{1}$ define
\[
 d_{\calF}(x)=|\{F\in\calF:x\le F\}|.
\]
For $S\in\vbinom{V}{k}$ define
\[
 d_{\calF}(S)=\sum_{x\in\vbinom{S}{1}}d_{\calF}(x).
\]
If $\calF\ne\vbinom{V}{k}$, its \emph{projective Ore-degree} is
\begin{equation}\label{eq:ore-definition}
 \sigma_{k,q}(\calF)=
 \min\left\{d_{\calF}(S):S\in\vbinom{V}{k}\setminus\calF\right\}.
\end{equation}
When no confusion can arise, we write simply $\sigma(\calF)$.

The use of projective points rather than nonzero vectors is the natural
normalization: every $k$-subspace contains exactly $\qbinom{k}{1}$ projective
points.  Unlike a minimum point-degree, the parameter in
\eqref{eq:ore-definition} permits low-degree points, but controls how such
points can be distributed together inside every nonmember.  For a point $X\in\vbinom{V}{1}$, the \emph{point-star centred at $x$} is
\[
 \calS(X)=\left\{F\in\vbinom{V}{k}:X\le F\right\}.
\]
We also call $\calS(X)$ the \emph{full point-star} to emphasize that it
contains every $k$-subspace of $V$ containing $X$.
Every point $y\ne X$ has degree $\qbinom{n-2}{k-2}$ in $\calS(X)$, and hence
\[
 \sigma_{k,q}(\calS(X))=
 \qbinom{k}{1}\qbinom{n-2}{k-2}.
\]
Our first theorem is the projective $q$-analogue of the Ore-degree
Erd\H{o}s--Ko--Rado theorem.

\begin{thm}[Projective Ore--EKR theorem]\label{thm:ore-ekr}
Let $q$ be a prime power, $k\ge2$ and $n\ge2k+1$.  If
$\calF\subseteq\vbinom{V}{k}$ is intersecting, then
\begin{equation}\label{eq:ore-ekr-bound}
 \sigma_{k,q}(\calF)\le
 \qbinom{k}{1}\qbinom{n-2}{k-2}.
\end{equation}
Equality holds if and only if $\calF$ is a full point-star.
\end{thm}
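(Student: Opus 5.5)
The plan is to split according to whether $\calF$ is trivial. In each case I bound $\sigma_{k,q}(\calF)$ by an average of $d_{\calF}(S)$ over a suitable set of nonmembers, using $\sigma_{k,q}(\calF)\le d_{\calF}(S)$ for every $S\notin\calF$. The basic identity is double counting: for any $S$,
\[
 d_{\calF}(S)=\sum_{F\in\calF}\qbinom{\dim(F\cap S)}{1}.
\]
Averaging this over a set of $S$ that is transitive under a suitable subgroup of $\mathrm{GL}(V)$ reduces $\sigma$ to a function of $|\calF|$ and a few incidence counts.

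\emph{Trivial case.} Suppose $\calF\subseteq\calS(X)$, and let $\calT$ be the set of $k$-subspaces $S$ with $X\not\le S$. All of these are nonmembers. For a fixed $F\ni X$, the average of $\qbinom{\dim(F\cap S)}{1}$ over $S\in\calT$ equals the number of points of $F$ other than $X$, namely $q\qbinom{k-1}{1}$, times the probability that a fixed point $y\ne X$ lies in a random $S\in\calT$. Summing over $F\in\calF$, the average of $d_{\calF}(S)$ over $\calT$ is
\[
 \frac{|\calF|}{\qbinom{n-1}{k-1}}\cdot \qbinom{k}{1}\qbinom{n-2}{k-2},
\]
since this is exactly the value for the full star, where every $S\in\calT$ has the same degree. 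Hence $\sigma\le\qbinom{k}{1}\qbinom{n-2}{k-2}$, with strict inequality unless $|\calF|=\qbinom{n-1}{k-1}$, that is, unless $\calF=\calS(X)$. If $\calF$ is empty or has a single member, the bound is immediate from the same averaging.

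\emph{Nontrivial case.} Here I average over all nonmembers. Summing $d_{\calF}(S)$ over all $S$ gives $|\calF|\qbinom{k}{1}\qbinom{n-1}{k-1}$. Summing over $S\in\calF$ gives $\sum_{F,F'\in\calF}\qbinom{\dim(F\cap F')}{1}\ge|\calF|^2$, because every pair of members shares a point. Writing $m=|\calF|$, this yields
\[
 \sigma_{k,q}(\calF)\le g(m):=\frac{m\bigl(\qbinom{k}{1}\qbinom{n-1}{k-1}-m\bigr)}{\qbinom{n}{k}-m}.
\]
One checks that $g$ is increasing on the relevant range. I then insert the vector-space Hilton--Milner bound $m\le h_q(n,k)$ from \cite{BlokhuisEtAl}, valid for $n\ge2k+1$, where
\[
 h_q(n,k)=\qbinom{n-1}{k-1}-q^{k(k-1)}\qbinom{n-k-1}{k-1}+q^k.
\]
It remains to verify $g(h_q(n,k))<\qbinom{k}{1}\qbinom{n-2}{k-2}$, which gives strict inequality in this case, so equality can only come from the full star.

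\emph{Main obstacle.} The hard step is this final numerical comparison at the boundary $n=2k+1$, especially for $q=2$. There $h_q$ is a constant fraction of $\qbinom{n-1}{k-1}$, while the target is about $\qbinom{n-1}{k-1}\qbinom{k}{1}/q^{\,n-k}$ times a constant, so the margin is only a constant factor. I would first bound $\qbinom{n}{k}/\qbinom{n-1}{k-1}$ and $h_q/\qbinom{n-1}{k-1}$ by explicit products such as $\prod(1-q^{-i})$ estimates.

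If the margin fails for small $q$, I would sharpen the pair sum using nontriviality. Some pairs of members meet in more than a point: every member meets the cover structure of a Hilton--Milner-type family in a $2$-space or more. Alternatively, I would restrict the averaging to nonmembers missing a fixed member $F_0\in\calF$. This is the kind of extra averaging over subspaces disjoint from a fixed member that the introduction mentions for the Hilton--Milner case.
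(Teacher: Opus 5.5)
Your nontrivial case follows the paper's route: average $d_{\calF}(S)$ over all nonmembers, then apply the vector-space Hilton--Milner bound. Your estimate $\sum_{F,F'\in\calF}\qbinom{\dim(F\cap F')}{1}\ge m^2$ is a slightly sharper, intersecting-specific replacement for the Cauchy--Schwarz step in Lemma~\ref{lem:ore-size}. With $p=\qbinom{k}{1}$ and $N=\qbinom{n}{1}$, that step gives $\sum_x d_x^2\ge p^2m^2/N$ for arbitrary families. Your trivial case is a genuinely different argument. The paper uses Observation~\ref{obs:monotone} plus a local perturbation: a point $y\ne X$ of a deleted star member has deficient degree and lies in a nonmember avoiding $X$. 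Your averaging over the $k$-spaces avoiding $X$ instead gives the quantitative bound $\sigma_{k,q}(\calF)\le \frac{|\calF|}{\qbinom{n-1}{k-1}}\qbinom{k}{1}\qbinom{n-2}{k-2}$. Both are correct.

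The step you flag as the main obstacle is routine, and none of your fallback sharpenings is needed. By \eqref{eq:incidence-identity}, $pR=p^2G/N$, where $R=\qbinom{n-1}{k-1}$ and $G=\qbinom{n}{k}$. Since $p^2<N$, you get $g(m)\le \frac{p^2}{N}m$ for every $m$, so you do not even need monotonicity of $g$. It therefore suffices to show $h_q(n,k)<\frac{N}{p}Q$ with $Q=\qbinom{n-2}{k-2}$. Lemma~\ref{lem:gaussian-difference}, together with $p<q^k$, gives
\[
 h_q(n,k)\le pQ+q^k<q^k(Q+1)\le q^{k+1}Q\le q^{n-k}Q<\frac{q^n-1}{q^k-1}\,Q=\frac{N}{p}Q .
\]
This holds uniformly, including $q=2$, $n=2k+1$; it is exactly Lemma~\ref{lem:ekr-separation}.

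What is genuinely missing is the case $k=2$. The Hilton--Milner theorem you invoke is proved only for $k\ge3$. Also, \cite{BlokhuisEtAl} covers $q=2$ only for $n\ge2k+2$; the case $q=2$, $n=2k+1$ is due to \cite{WangXuZhang}. For $k=2$ you need a direct argument, as in the paper. Take distinct $A,B\in\calF$ meeting in a point $X$, and $C\in\calF$ with $X\not\le C$. Then $C$ meets $A$ and $B$ in distinct points, so $C\le A+B$. Every other member either avoids $X$ (same argument) or is spanned by $X$ and its intersection point with $C$. So $\calF$ lies in the $3$-space $A+B$, and $|\calF|\le q^2+q+1$. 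This equals $h_q(n,2)$, so your comparison then applies without change.
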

\begin{rema}[Optimality of the EKR dimension threshold]
	The condition $n\ge2k+1$ cannot be lowered while retaining the equality
	characterization.  If $n=2k$ and $H$ is a $(2k-1)$-subspace, then
	$\vbinom{H}{k}$ is intersecting and is not a full point-star.  Every nonmember
	meets $H$ in dimension $k-1$, whence
	\[
	\sigma_{k,q}\!\left(\vbinom{H}{k}\right)
	=\qbinom{k-1}{1}\qbinom{2k-2}{k-1}
	=\qbinom{k}{1}\qbinom{2k-2}{k-2}.
	\]
	Thus it has exactly the point-star Ore-degree.
\end{rema}

We next state the nontrivial version.  Let $X<M\le V$ with
$\dim X=1$ and $\dim M=k+1$. Define
\begin{equation}\label{eq:HM-definition}
 \calH_1(X,M)=
 \left\{F\in\vbinom{V}{k}:X\le F,\ \dim(F\cap M)\ge2\right\}
 \cup\vbinom{M}{k}.
\end{equation}
Its \emph{outer part} is
\[
 \calH_1^\circ(X,M)=
 \left\{F\in\vbinom{V}{k}:X\le F,\ \dim(F\cap M)\ge2,\
 F\nleq M\right\}.
\]
We will prove that
\[
 \sigma_{k,q}(\calH_1(X,M))=
 \qbinom{k}{1}
 \left(
 \qbinom{n-2}{k-2}
 -q^{k(k-2)}\qbinom{n-k-2}{k-2}
 \right).
\]
When $k=3$, a second construction has the same Ore-degree: for a
$3$-subspace $Z$, let
\[
 \calH_3(Z)=\left\{F\in\vbinom{V}{3}:\dim(F\cap Z)\ge2\right\}.
\]

\begin{thm}[Sharp projective Ore--Hilton--Milner theorem]
\label{thm:ore-hm-sharp}
Let $q$ be a prime power and $k\ge3$.  Assume either
$
 q\ge3\ \text{ and }\ n\ge2k+1,\
\text{or}\ q\ge2\ \text{ and }\ n\ge2k+2.
$
If
$\calF\subseteq\vbinom{V}{k}$ is a nontrivial intersecting family, then
\[
 \sigma_{k,q}(\calF)\le
 \qbinom{k}{1}
 \left(
 \qbinom{n-2}{k-2}
 -q^{k(k-2)}\qbinom{n-k-2}{k-2}
 \right).
\]
Moreover, equality holds if and only if one of the following occurs:
\begin{enumerate}[\rm(i)]
\item for some $X<M$ with $\dim X=1$ and $\dim M=k+1$,
\[
 \calF=\calH_1^\circ(X,M)\cup\calG,
\]
where $\calG\subseteq\vbinom{M}{k}$ and at least one member of $\calG$
does not contain $X$;
\item $k=3$ and, for some $3$-subspace $Z$,
$
 \calF=\calH_3(Z)\ \hbox{or}\
 \calF=\calH_3(Z)\setminus\{Z\}.
$
\end{enumerate}
\end{thm}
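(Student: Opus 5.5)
The plan is to turn the Ore-degree hypothesis into a lower bound on $|\calF|$, and then use the known classification of large nontrivial intersecting families of $k$-subspaces.

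\textbf{Step 1: an incidence inequality.} Double count pairs $(x,S)$ with $S\in\vbinom{V}{k}\setminus\calF$ and $x\le S$. Each point $x$ lies in $\qbinom{n-1}{k-1}$ $k$-subspaces, so
\[
 \sum_{S\notin\calF} d_{\calF}(S)=\sum_{x} d_{\calF}(x)\Bigl(\qbinom{n-1}{k-1}-d_{\calF}(x)\Bigr)+\text{(a correction from pairs inside $\calF$)} .
\]
More precisely, summing $d_{\calF}(S)$ over all $S$ gives $\sum_x d_{\calF}(x)\qbinom{n-1}{k-1}=|\calF|\qbinom{k}{1}\qbinom{n-1}{k-1}$. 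We then subtract $\sum_{F\in\calF}d_{\calF}(F)$. For the latter sum we use that, for $F\in\calF$, $d_{\calF}(F)=\sum_{G\in\calF}\qbinom{\dim(F\cap G)}{1}$. The trivial bound here is at most $|\calF|\bigl(\qbinom{k}{1}+(|\calF|-1)\qbinom{k-1}{1}\bigr)$.

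This yields
\[
 \sigma(\calF)\Bigl(\qbinom{n}{k}-|\calF|\Bigr)\le |\calF|\qbinom{k}{1}\qbinom{n-1}{k-1}-\sum_{F\in\calF}d_{\calF}(F).
\]
The right-hand side is increasing in $|\calF|$ in the relevant range. Hence $\sigma(\calF)\ge\sigma(\calH_1)$ forces $|\calF|$ to be at least roughly the Hilton--Milner size $|\calH_1(X,M)|$, up to a loss we must track carefully.

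\textbf{Step 2: structure.} With $|\calF|$ near the HM bound, invoke the Cao--Lv--Wang--Zhou classification of maximal nontrivial intersecting families above the second-largest size. These come in two kinds. The first is a subfamily of an $\calH_1(X,M)$-type family. The second, for $k=3$ only, is of $\calH_3(Z)$-type via De Boeck. The covering-number analysis handles the case $\tau(\calF)\ge 3$: it shows such families are far smaller than our lower bound. We expect the boundary cases $n=2k+1$ (with $q\ge3$) and $n=2k+2$ (with $q=2$) to need sharper comparisons, including a separate averaging argument at $(q,k,n)=(3,4,9)$. There we average $d_{\calF}(S)$ over $k$-spaces $S$ disjoint from a fixed member.

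\textbf{Step 3: the exact Ore-degree inside the structures.} Suppose $\calF\subseteq\calH_1(X,M)$. Compute $d_{\calF}(S)$ for a nonmember $S$ through $X$ with $\dim(S\cap M)=1$. The $\calH_1$ members through points of $S$ are those through $X$ meeting $M$ in dimension $\ge2$. Every point $y\ne X$ of $S$ contributes $\qbinom{n-2}{k-2}-q^{k(k-2)}\qbinom{n-k-2}{k-2}$. This is the count of $k$-spaces through $\langle X,y\rangle$ meeting $M$ only in $X$, subtracted from the full star. The point $X$ contributes a comparable amount. Sharpening the choice of $S$, one checks that the minimum is the stated value.

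Deleting any outer member $F\in\calH_1^\circ$ makes $F$ a nonmember. We show $d_{\calF}(F)$ is then strictly smaller than the threshold, since it misses the inner contributions. So the outer part is rigid. Members of $\vbinom{M}{k}$ affect only degrees of points in $M$, and no minimizing nonmember sees them. So $\calG$ is arbitrary, subject to nontriviality. The analogous computation for $\calH_3(Z)$ gives the same value, and shows that removing anything beyond $Z$ drops $\sigma$.

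\textbf{Main obstacle.} The main obstacle is Step 1–2 at the boundary dimensions. The inequality loses too much when $\sum_F d_{\calF}(F)$ is only crudely bounded. The resulting size bound then falls below the classification threshold in exactly the cases $n=2k+1$ and $n=2k+2$ with small $q$. To fix this, I would first try bounding $\sum_F d_{\calF}(F)$ using intersecting-ness: pairs meet in dimension $1$ generically. Failing that, I would instead lower-bound $\sigma$ only over nonmembers disjoint from a fixed $F_0\in\calF$, and average there.
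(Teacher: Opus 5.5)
Your architecture matches the paper's: Ore-degree gives a lower bound on $|\calF|$, that feeds a classification of large maximal nontrivial intersecting families, and the classification drives the equality analysis. However, Step~1 and the rigidity part of Step~3 argue in the wrong direction. Write $p=\qbinom{k}{1}$ and $D=\qbinom{n-2}{k-2}-q^{k(k-2)}\qbinom{n-k-2}{k-2}$.

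\textbf{Step 1.} In $\sigma(\calF)\bigl(\qbinom{n}{k}-|\calF|\bigr)\le p\qbinom{n-1}{k-1}|\calF|-\sum_{F\in\calF}d_{\calF}(F)$, the term $\sum_{F}d_{\calF}(F)=\sum_x d_{\calF}(x)^2$ is \emph{subtracted}. So bounding $\sigma$ above in terms of $|\calF|$ needs a \emph{lower} bound on it. Your ``trivial bound'' $|\calF|\bigl(p+(|\calF|-1)\qbinom{k-1}{1}\bigr)$ is an upper bound and yields nothing. Your proposed repair reads as a sharper version of that same upper bound. The missing step is Cauchy--Schwarz, $\sum_x d_{\calF}(x)^2\ge p^2|\calF|^2/\qbinom{n}{1}$. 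Since $\qbinom{n}{1}\qbinom{n-1}{k-1}=p\qbinom{n}{k}$, the factor $\qbinom{n}{k}-|\calF|$ cancels exactly and gives $|\calF|\ge\qbinom{n}{1}\sigma(\calF)/p^2$ (Lemma~\ref{lem:ore-size}). What intersecting-ness genuinely supplies is the lower bound $\sum_x d_{\calF}(x)^2\ge|\calF|(|\calF|+p-1)$, which would also serve. With this, for $k\ge4$ the size bound suffices except at $(q,k,n)=(3,4,9)$. There your second fallback, averaging over $k$-spaces disjoint from a fixed $F_0\in\calF$ to get $\sigma(\calF)\le\frac{p(p-1)}{\qbinom{n}{1}-p}(|\calF|-1)$, is exactly what the paper uses.

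\textbf{Step 2} is coarser than the actual situation:
\begin{enumerate}[\rm(a)]
\item For $k\ge4$, Cao--Lv--Wang--Zhou return $\calH_2(X,M,C)$ for every $\dim C\in\{k+1,\dots,2k-1,n\}$, plus $\calH_3^{(4)}(Z)$ when $k=4$. All of these except $\dim C=k+1$ (which is $\calH_1(X,C)$) must be beaten by explicit comparisons with $\frac{\qbinom{n}{1}}{p}D$.
\item Their theorem needs $n\ge2k+3$. At $n\in\{2k+1,2k+2\}$ you must rerun their covering-number lemmas for $\tau=2$ as well as $\tau\ge3$.
\item At $(k,n)=(3,7)$, the family $\vbinom{Y}{3}$ with $\dim Y=5$ is larger than $\qbinom{7}{1}$. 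Size does not exclude it, so you must compute $\sigma_{3,q}(\vbinom{Y}{3})=\qbinom{4}{2}<p^2$ directly.
\end{enumerate}

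\textbf{Step 3.} The witnesses are wrong. A nonmember $S$ of $\calH_1(X,M)$ through $X$ is never near-minimal. There $X$ has degree at least $\qbinom{n-2}{k-2}>D$, so $d_{\calH_1(X,M)}(S)\ge\qbinom{n-2}{k-2}+(p-1)D>pD$. The minimizers are the $k$-spaces disjoint from $M$, which exist since $n\ge2k+1$ and all of whose points have degree $D$.

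Likewise, deleting an outer member $F$ does \emph{not} make $F$ a low nonmember. In $\calH_1(X,M)\setminus\{F\}$, the space $F$ still contains at least $q+1$ points of $M$. Each has degree at least $\qbinom{n-2}{k-2}-1=D+q^{k(k-2)}\qbinom{n-k-2}{k-2}-1$, which pushes the degree sum of $F$ above $pD$.

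The correct rigidity argument runs through points outside $M$. Suppose $\calF\subseteq\calH_1(X,M)$ and $\sigma(\calF)=pD$. Then every $k$-space $S$ disjoint from $M$ satisfies $pD\le d_{\calF}(S)\le d_{\calH_1(X,M)}(S)=pD$, so each point of $S$ has $\calF$-degree exactly $D$. Every $y\not\le M$ lies in such an $S$, and every outer member contains such a $y$. Hence $\calH_1^\circ(X,M)\subseteq\calF$.

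For sufficiency, ``no minimizing nonmember sees $\calG$'' is not enough. Thinning $\vbinom{M}{k}$ creates new nonmembers and lowers the degrees of points of $M$. You must check two things:
\begin{enumerate}[\rm(a)]
\item Every point keeps degree at least $D$. Points of $M$ already have degree at least $\qbinom{n-2}{k-2}-\qbinom{k-1}{1}$ in $\calH_1^\circ(X,M)$, and $q^{k(k-2)}>\qbinom{k-1}{1}$.
\item The members of $\calH_1^\circ(X,M)$ intersect in exactly $X$. This is what makes ``some member of $\calG$ avoids $X$'' the precise nontriviality condition.
\end{enumerate}
The same disjoint-from-$Z$ argument, rather than a deleted member itself, is what pins down $\calH_3(Z)$ and $\calH_3(Z)\setminus\{Z\}$.
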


\begin{rema}[Binary critical case and optimality of the dimension range]
\label{rema:hm-binary-boundary}
The only unresolved case at the natural boundary $n=2k+1$ is $q=2$.  We
conjecture that, for $q=2$, $k\ge3$, and $n=2k+1$, the inequality and the
complete equality classification in Theorem~\ref{thm:ore-hm-sharp} continue
to hold.  The condition $n\ge2k+1$ in the $q\ge3$ branch of that theorem is
best possible, and the same would then be true for the conjectural binary
extension.  Indeed, if $n=2k$ and $H$ is a $(2k-1)$-subspace, then
$\vbinom{H}{k}$ is nontrivial and intersecting, and
\[
 \sigma_{k,q}\!\left(\vbinom{H}{k}\right)
 =\qbinom{k}{1}\qbinom{2k-2}{k-2}.
\]
This is strictly larger than
\[
 \qbinom{k}{1}
 \left(\qbinom{2k-2}{k-2}-q^{k(k-2)}\right),
\]
which is the proposed right-hand side at $n=2k$.
\end{rema}

\subsection{Vector-space matching}

For two subspaces, trivial intersection is the exact analogue of disjointness.
For three or more subspaces, however, pairwise trivial intersection does not
force their sum to be direct.  For example, three distinct $1$-subspaces of a
$2$-subspace are pairwise trivially intersecting, although their sum has
dimension $2$.  Thus two inequivalent notions occur in finite geometry:
partial-spread matchings, whose members are pairwise trivially intersecting,
and direct-sum matchings.

For a faithful analogue of a set matching in all cardinalities, we use the
rank-additive notion introduced in the vector-space Erd\H{o}s matching problem
\cite{FengShangguanYangZhang}.

\begin{definition}\label{def:direct-matching}
A \emph{direct-sum matching} of size $s$ in
$\calF\subseteq\vbinom{V}{k}$ is a collection
$F_1,\ldots,F_s\in\calF$ satisfying
\[
 \dim(F_1+\cdots+F_s)=sk.
\]
The largest possible $s$ is the \emph{direct-sum matching number} of $\calF$, denoted by
$\nu_{\oplus}(\calF)$.
\end{definition}

For $s=2$, the direct-sum condition is simply $F_1\cap F_2=\{0\}$, so the
intersection theorems above are unchanged.  The distinction is indispensable
for the higher matching result.  Let $W\le V$ have dimension $m$ and define
\begin{equation}\label{eq:B-family}
 \calB_q(V,k;W)=
 \left\{F\in\vbinom{V}{k}:F\cap W\ne\{0\}\right\}.
\end{equation}
No $m+1$ members of this family can have direct sum, because a direct sum would
provide $m+1$ linearly independent nonzero vectors in $W$.  Moreover,
\[
 |\calB_q(V,k;W)|=
 \qbinom{n}{k}-q^{mk}\qbinom{n-m}{k}.
\]
For $m\ge1$, put
\[
 D_q(n,k,m)=
 \qbinom{n-1}{k-1}
 -q^{m(k-1)}\qbinom{n-m-1}{k-1}.
\]
Every point outside $W$ has degree $D_q(n,k,m)$ in
$\calB_q(V,k;W)$, and every nonmember is disjoint from $W$.  Consequently,
\[
 \sigma_{k,q}(\calB_q(V,k;W))=\qbinom{k}{1}D_q(n,k,m).
\]

\begin{thm}[Projective Ore condition for direct-sum matchings]
\label{thm:ore-matching}
Fix a prime power $q$ and integers $k\ge2$ and $s\ge2$.  Suppose that
$n\ge2k+1$ when $s=2$, and that
$n\ge(2s-1)k-s+4$ when $s\ge3$.  If
$\calF\subsetneq\vbinom{V}{k}$ and
\begin{equation}\label{eq:ore-matching-bound}
 \sigma_{k,q}(\calF)>
 \qbinom{k}{1}
 \left(
 \qbinom{n-1}{k-1}
 -q^{(s-1)(k-1)}\qbinom{n-s}{k-1}
 \right),
\end{equation}
then $\nu_{\oplus}(\calF)\ge s$.\end{thm}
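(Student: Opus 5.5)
Note first that the threshold in \eqref{eq:ore-matching-bound} is exactly $\qbinom{k}{1}D_q(n,k,s-1)$, the Ore-degree of $\calB_q(V,k;W)$ with $\dim W=s-1$. The plan is to turn the Ore hypothesis into a lower bound on $|\calF|$ and then apply a vector-space Erd\H{o}s matching theorem.

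\textbf{Step 1: an incidence inequality.} For any $\calF\subsetneq\vbinom{V}{k}$, put $\calF^c=\vbinom{V}{k}\setminus\calF$ and double count pairs $(x,S)$ with $S\in\calF^c$ and $x\le S$, weighting each pair by $d_\calF(x)$. This gives
\[
 |\calF^c|\,\sigma(\calF)\le\sum_{S\in\calF^c}d_\calF(S)=\sum_{x}d_\calF(x)\,d_{\calF^c}(x)
 =\sum_x d_\calF(x)\Bigl(\qbinom{n-1}{k-1}-d_\calF(x)\Bigr).
\]
By Cauchy--Schwarz, $\sum_x d_\calF(x)^2\ge(\qbinom{k}{1}|\calF|)^2/\qbinom{n}{1}$, so the right-hand side is at most $\qbinom{k}{1}|\calF|\bigl(\qbinom{n-1}{k-1}-\qbinom{k}{1}|\calF|/\qbinom{n}{1}\bigr)=\qbinom{k}{1}|\calF|\,|\calF^c|\,\qbinom{k}{1}/\qbinom{n}{1}$. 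Hence
\[
 \sigma(\calF)\le \frac{\qbinom{k}{1}^2}{\qbinom{n}{1}}\,|\calF|.
\]
This averaging bound is too weak to be sharp, because the extremal family $\calB_q$ has very unequal point degrees: points of $W$ lie in all $\qbinom{n-1}{k-1}$ $k$-spaces through them. A sharp conversion must keep the quadratic term rather than flatten it.

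\textbf{Step 2: reduction to an Erd\H{o}s matching theorem.} Suppose $\nu_\oplus(\calF)\le s-1$. For $s\ge3$ and $n\ge(2s-1)k-s+4$, I would invoke the vector-space Erd\H{o}s matching theorem of Feng--Shangguan--Yang--Zhang, in a stability form. It gives $|\calF|\le|\calB_q(V,k;W)|$, and, away from $\calB_q$-type families, a much smaller bound. For $s=2$ the statement follows directly from Theorem~\ref{thm:ore-ekr}, since $D_q(n,k,1)=\qbinom{n-2}{k-2}$.

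\textbf{Step 3: the two cases for $s\ge3$.}
\begin{itemize}
\item If $\calF\subseteq\calB_q(V,k;W)$ for some $(s-1)$-space $W$, then $\calF\ne\vbinom{V}{k}$ forces a nonmember. Either some $S$ disjoint from $W$ exists, and then every point of $S$ has degree at most $D_q(n,k,s-1)$, so $\sigma\le\qbinom{k}{1}D_q$. Or else I choose a nonmember meeting $W$ and bound its degree sum directly.
\item Otherwise the stability bound makes $|\calF|$ much smaller than $|\calB_q|$. I then pick a nonmember $S$ at random, or average $d_\calF(S)$ over all $S$ disjoint from a fixed member, using the margin in $n$ to push the average below the threshold.
\end{itemize}

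\textbf{Main obstacle.} The hardest part is this second case. It needs an averaging bound on $d_\calF(S)$ over nonmembers that handles the high-degree points, and it needs stability quantitative enough to be valid already at $n\ge(2s-1)k-s+4$. I would first try conditioning on the set $T$ of points of degree greater than $D_q$: show that $\dim\langle T\rangle\le s-1$, since otherwise a greedy construction builds a direct-sum matching, and then average over the $k$-spaces disjoint from $\langle T\rangle$.
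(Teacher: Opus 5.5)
Your $s=2$ reduction and your treatment of the case $\calF\subseteq\calB_q(V,k;W)$ are correct and match the paper. The ``or else'' branch there never occurs, since $n\ge k+s-1$ always provides a $k$-space disjoint from $W$.

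\textbf{The gap.} The genuine gap is the complementary case, which you leave open as the ``main obstacle''. The missing observation is that your Step~1 inequality, which you discarded as too weak, is exactly what closes it. Sharpness is only needed in the containment case, and monotonicity settles that case exactly. In the other case there is slack. For $\calB_q$ itself, Step~1 loses only a bounded factor against $|\calB_q|$, because the high-degree points are just the $\qbinom{s-1}{1}$ points of $W$. The stability bound, by contrast, is smaller than $|\calB_q|$ by a factor of about $q$.

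Concretely, Step~1 and \eqref{eq:ore-matching-bound} give $|\calF|>\frac{\qbinom{n}{1}}{\qbinom{k}{1}}D_q(n,k,s-1)$. The Feng--Shangguan--Yang--Zhang stability theorem (Theorem~\ref{thm:linear-stability}, whose hypothesis with $m=s-1$ is exactly $n\ge(2s-1)k-s+4$) gives $|\calF|\le\ell_q(n,k,s-1)$, the size of the recursive Hilton--Milner-type family $\calL_q(n,k,s-1)$. What you still need is the explicit comparison
\[
 \frac{\qbinom{n}{1}}{\qbinom{k}{1}}D_q(n,k,s-1)>\ell_q(n,k,s-1)\qquad\bigl(n\ge(2s-1)k-s+4\bigr),
\]
which is Lemma~\ref{lem:matching-separation}. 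Heuristically the left side is at least about $\frac{q}{q+1}\qbinom{s-1}{1}\qbinom{n-1}{k-1}$. The right side is at most $\qbinom{s-2}{1}\qbinom{n-1}{k-1}$ plus lower-order terms from the base family $\calL_q(n-s+2,k,1)$. Those terms, and the loss in $D_q$, must still be controlled by finite Gaussian estimates such as Lemma~\ref{lem:gaussian-difference}. Your proposal names neither $\ell_q$ nor this inequality, so the argument is incomplete precisely where the content lies.

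\textbf{Your suggested substitutes do not work.} Averaging $d_\calF(S)$ over $k$-spaces disjoint from a fixed member relies on such spaces being nonmembers. That holds for intersecting families but fails for $s\ge3$, so the Ore hypothesis says nothing about them. The lemma you want is also false: it says that in a family with $\nu_\oplus(\calF)\le s-1$, the points of degree greater than $D_q(n,k,s-1)$ span at most $s-1$ dimensions. For $k=2$ one has $D_q(n,2,s-1)=\qbinom{s-1}{1}$. In $\vbinom{H}{2}$ with $\dim H=2s-1$, every point of $H$ has degree $\qbinom{2s-2}{1}>\qbinom{s-1}{1}$, yet $\nu_\oplus\bigl(\vbinom{H}{2}\bigr)\le s-1$. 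The greedy step breaks because degree above $D_q(n,k,s-1)$ only guarantees a member through the point avoiding a prescribed $(s-1)$-dimensional subspace. Extending a partial direct-sum matching $F_1,\dots,F_j$ requires avoiding the $jk$-dimensional space $F_1+\cdots+F_j$, which that degree bound does not control.
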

\begin{rema}[Projective Ore condition for direct-sum matchings]
The strict inequality is best possible: if $W$ is an $(s-1)$-subspace, then
$\nu_{\oplus}(\calB_q(V,k;W))\le s-1$, while its Ore-degree equals the
right-hand side of \eqref{eq:ore-matching-bound}.
In fact, this construction has direct-sum matching number exactly $s-1$
whenever $n\ge (s-1)k$.
\end{rema}

Theorem~\ref{thm:ore-matching} is the direct vector-space counterpart of the
Ore matching theorem in \cite{BaloghPalmerRaeisi}.  Its proof combines a
projective incidence inequality with the Hilton--Milner-type stability theorem
for the vector-space Erd\H{o}s matching problem
\cite{FengShangguanYangZhang}.  For $s\ge3$, its dimension threshold is the
one in that stability theorem; the case $s=2$ is covered by
Theorem~\ref{thm:ore-ekr}.

The matching theorem immediately yields the same multicolour consequence as
in the set case.

\begin{definition}\label{def:colouring}
Let $c$ be a positive integer and write $[c]=\{1,\ldots,c\}$.  A
\emph{$c$-colouring} of a family
$\calF\subseteq\vbinom{V}{k}$ is a map
\[
 \chi:\calF\longrightarrow[c].
\]
The members of $\chi^{-1}(i)$ are said to have colour $i$.  A direct-sum
matching $F_1,\ldots,F_s$ in $\calF$ is \emph{monochromatic} if
\[
 \chi(F_1)=\cdots=\chi(F_s).
\]
If their common colour is $i$, we call them a monochromatic direct-sum
matching of colour $i$ and size $s$.
\end{definition}

\begin{cor}[A direct-sum Ramsey consequence]\label{cor:ramsey}
Fix $q$, $k\ge2$, and positive integers $a_1,\ldots,a_c$.  Set
\[
 M=\sum_{i=1}^{c}(a_i-1).
\]
If $M=0$, assume $n\ge k$; if $M=1$, assume $n\ge2k+1$; and if $M\ge2$,
assume $n\ge(2M+1)k-M+3$.  If
$\calF\subsetneq\vbinom{V}{k}$ satisfies
\begin{equation}\label{eq:ramsey-ore}
 \sigma_{k,q}(\calF)>
 \qbinom{k}{1}\left(
 \qbinom{n-1}{k-1}
 -q^{M(k-1)}\qbinom{n-M-1}{k-1}
 \right),
\end{equation}
then every $c$-colouring of $\calF$ contains, for some $i\in[c]$, a
monochromatic direct-sum matching of size $a_i$ in colour $i$.
\end{cor}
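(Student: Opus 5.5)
The plan is to reduce the corollary to Theorem~\ref{thm:ore-matching} with $s=M+1$ by a standard pigeonhole argument.

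\emph{Degenerate case $M=0$.} Here every $a_i=1$. Any $\calF$ satisfying \eqref{eq:ramsey-ore} is nonempty, since the empty family has Ore-degree $0$ while the right-hand side is $0$ and the inequality is strict. So any single member $F\in\calF$, of colour $i=\chi(F)$, is a monochromatic direct-sum matching of size $a_i=1$. The hypothesis $n\ge k$ only guarantees that the Grassmannian is nonempty.

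\emph{Case $M\ge1$.} Put $s=M+1\ge2$. The right-hand side of \eqref{eq:ramsey-ore} is exactly the right-hand side of \eqref{eq:ore-matching-bound} with $s-1=M$ and $n-s=n-M-1$.
\begin{itemize}
\item If $M=1$, then $s=2$, and the hypothesis $n\ge2k+1$ is the one required in Theorem~\ref{thm:ore-matching}.
\item If $M\ge2$, then $s\ge3$, and $(2s-1)k-s+4=(2M+1)k-(M+1)+4=(2M+1)k-M+3$, which matches the assumed bound.
\end{itemize}
Hence Theorem~\ref{thm:ore-matching} applies and gives $F_1,\ldots,F_{M+1}\in\calF$ with $\dim(F_1+\cdots+F_{M+1})=(M+1)k$.

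\emph{Pigeonhole step.} Colour $F_1,\ldots,F_{M+1}$ by $\chi$ and suppose that, for every $i$, at most $a_i-1$ of them have colour $i$. Then
\[
M+1\le\sum_i(a_i-1)=M,
\]
a contradiction. So some colour $i$ appears on at least $a_i$ of these subspaces. Any subcollection of a direct-sum matching is again a direct-sum matching: if the sum of all $M+1$ subspaces has dimension $(M+1)k$, then every subfamily of $t$ of them has sum of dimension $tk$, because dimension is subadditive. Choosing $a_i$ of the colour-$i$ members therefore gives the required monochromatic direct-sum matching of size $a_i$ in colour $i$.

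Nothing here is a genuine obstacle. The only points needing care are checking the arithmetic that the dimension thresholds match, and handling the degenerate case $M=0$ separately, since Theorem~\ref{thm:ore-matching} requires $s\ge2$.
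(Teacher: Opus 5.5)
Your proposal is correct and follows the paper's proof essentially verbatim: the $M=0$ case via $\sigma(\calF)>0$ forcing $\calF\neq\emptyset$, then Theorem~\ref{thm:ore-matching} with $s=M+1$ (the dimension thresholds and right-hand sides match as you computed), then pigeonhole over the $M+1$ matching members. Your justification that a subcollection of a direct-sum matching is again one, via subadditivity of dimension, makes explicit a step the paper leaves implicit.
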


Theorems \ref{thm:ore-ekr} and \ref{thm:ore-hm-sharp} use the incidence inequality
together with sharp cardinality and structural theorems; at the boundary
$n=2k+1$, the latter also uses covering-number estimates, a
disjoint-subspace averaging lemma, and the classification of large
intersecting families of planes.
Theorem~\ref{thm:ore-matching} instead depends on the recent stability theorem
for the vector-space Erd\H{o}s matching problem.

The rest of the paper is organized as follows.  Section~\ref{sec:prelim}
contains the Gaussian coefficient identities, the incidence inequality, and
the Ore-degree calculations for the extremal constructions.
Section~\ref{sec:inputs} records the external structural results used in
the proofs.  Sections~\ref{sec:ekr-hm} and \ref{sec:matching} prove the main
theorems.

\section{Counting and the projective Ore inequality}\label{sec:prelim}

We first record standard Gaussian coefficient facts.

\begin{lemma}\label{lem:gaussian}
Let $N\ge r\ge1$ and let $a\ge0$ be fixed.  Empty products are understood
to be $1$.
\begin{enumerate}[\rm(i)]
\item
\[
 \qbinom{N}{r}=\qbinom{N-1}{r-1}+q^r\qbinom{N-1}{r}.
\]
\item For every positive integer $h$ with $N-h\ge r$,
\begin{equation}\label{eq:telescoping-gaussian}
 \qbinom{N}{r}-q^{hr}\qbinom{N-h}{r}
 =\sum_{j=0}^{h-1}q^{jr}\qbinom{N-j-1}{r-1}.
\end{equation}
\item As $n\to\infty$,
\[
 \qbinom{n-a}{r}
 =\frac{q^{r(n-a-r)}}{\prod_{i=1}^{r}(1-q^{-i})}
 \bigl(1+O(q^{-n})\bigr).
\]
\item If $L$ is an $\ell$-subspace of an $N$-dimensional vector space, then
the number of $r$-subspaces $U$ satisfying $U\cap L=\{0\}$ is
\[
 q^{\ell r}\qbinom{N-\ell}{r}.
\]
\item
\[
 q^{r(N-r)}\le\qbinom{N}{r}<4q^{r(N-r)}.
\]
\end{enumerate}
Statements (iii)--(v) remain valid for $r=0$.
\end{lemma}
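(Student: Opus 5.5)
We prove the five parts separately, working directly from the product formula
\[
 \qbinom{N}{r}=\prod_{i=0}^{r-1}\frac{q^{N-i}-1}{q^{r-i}-1}.
\]

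\textbf{Part (i).} This is the Pascal rule. The quickest route is algebraic. Write $\qbinom{N-1}{r-1}$ and $q^r\qbinom{N-1}{r}$ over the common denominator $\prod_{i=1}^{r}(q^i-1)$. The bracketed factor in the sum is then $(q^r-1)+q^r(q^{N-r}-1)=q^N-1$, which gives $\qbinom{N}{r}$.

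Alternatively, fix a hyperplane $H$ of the ambient $N$-space and count $r$-subspaces. Those inside $H$ number $\qbinom{N-1}{r}$. Each one not inside $H$ meets $H$ in an $(r-1)$-space $U$, and the $r$-spaces through a given $U$ not inside $H$ number $(q^{N-r+1}-1)/(q-1)-(q^{N-r}-1)/(q-1)=q^{N-r}$. This yields the dual identity $\qbinom{N}{r}=\qbinom{N-1}{r}+q^{N-r}\qbinom{N-1}{r-1}$. So I would simply use the algebraic check for the stated form.

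\textbf{Part (ii).} Induct on $h$. For $h=1$, this is (i) rearranged: $\qbinom{N}{r}-q^r\qbinom{N-1}{r}=\qbinom{N-1}{r-1}$. For the inductive step, apply the $h=1$ case to $\qbinom{N-h}{r}$ with $N-h-1\ge r$:
\[
q^{hr}\qbinom{N-h}{r}=q^{hr}\qbinom{N-h-1}{r-1}+q^{(h+1)r}\qbinom{N-h-1}{r}.
\]
Adding $q^{hr}\qbinom{N-h-1}{r-1}$ to the sum for $h$ gives the formula for $h+1$. Equivalently, the sum telescopes.

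\textbf{Part (iii).} Take the ratio. For fixed $r$ and $a$, each factor is
\[
\frac{q^{n-a-i}-1}{q^{r-i}-1}=\frac{q^{n-a-i}(1-q^{-(n-a-i)})}{q^{r-i}(1-q^{-(r-i)})}.
\]
The exponents sum to $\sum_{i=0}^{r-1}(n-a-r)=r(n-a-r)$. The denominator corrections give $\prod_{j=1}^r(1-q^{-j})$. The numerator corrections are $1+O(q^{-n})$, since there are only finitely many of them and $a,r$ are fixed. For $r=0$ both sides equal $1$.

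\textbf{Part (iv).} Count ordered bases. The number of ordered $r$-tuples of vectors $v_1,\dots,v_r$ with $\langle L,v_1,\dots,v_j\rangle$ of dimension $\ell+j$ at each stage is
\[
\prod_{j=0}^{r-1}(q^N-q^{\ell+j}).
\]
Divide by the number of ordered bases of an $r$-space, $\prod_{j=0}^{r-1}(q^r-q^j)$. Pulling $q^{\ell}$ out of each numerator factor gives
\[
q^{\ell r}\prod_{j=0}^{r-1}\frac{q^{N-\ell}-q^{j}}{q^r-q^j}=q^{\ell r}\qbinom{N-\ell}{r}.
\]
For $r=0$ the count is $1$.

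\textbf{Part (v).} This is the only step needing any care. Each factor satisfies $(q^{N-i}-1)/(q^{r-i}-1)\ge q^{N-r}$, because $q^{N-i}-1\ge q^{N-r}(q^{r-i}-1)$ is equivalent to $q^{N-r}\ge1$. This gives the lower bound.

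For the upper bound, write
\[
\qbinom{N}{r}\le q^{r(N-r)}\prod_{j=1}^r(1-q^{-j})^{-1}.
\]
It then suffices to show $\prod_{j\ge1}(1-q^{-j})\ge\prod_{j\ge1}(1-2^{-j})>1/4$. I would use $\prod_{j\ge1}(1-x^j)\ge1-x-x^2$, which follows from Euler's pentagonal theorem or directly from $1-x^j\ge$ a telescoping bound. At $x=1/2$ this gives $1/4$, but only non-strictly, so it must be sharpened. Instead, compute $(1-\tfrac12)(1-\tfrac14)=3/8$ and bound the tail by $\prod_{j\ge3}(1-2^{-j})\ge1-\sum_{j\ge3}2^{-j}=3/4$. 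This gives $9/32>1/4$. The $r=0$ case is trivial.
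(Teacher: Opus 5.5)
Your proposal is correct and follows the paper's proof essentially step by step. Part (i) is the Gaussian recurrence and (ii) is its iteration. Part (iii) rewrites the product formula in the same way, and (v) uses the same factor-by-factor lower bound and the same $\frac12\cdot\frac34\cdot\frac34=\frac{9}{32}>\frac14$ estimate. The only variation is in (iv): you count ordered bases independent of $L$, whereas the paper uses a bijection with graphs of linear maps from an $r$-subspace of a complement of $L$ into $L$. Both give $q^{\ell r}\qbinom{N-\ell}{r}$ equally directly.
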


\begin{proof}
Part (i) is the usual Gaussian recurrence, and iterating it gives (ii).  Part
(iii) follows from
\[
 \qbinom{n-a}{r}
 =q^{r(n-a-r)}
 \prod_{j=0}^{r-1}
 \frac{1-q^{-(n-a-j)}}{1-q^{-(r-j)}}.
\]
For (iv), choose a complement $W$ of $L$.  Every $r$-subspace disjoint from
$L$ is the graph of a linear map from an $r$-subspace of $W$ to $L$.
There are $\qbinom{N-\ell}{r}$ choices for the domain and $q^{\ell r}$
choices for the map.  Finally, the product formula shows the lower bound in
(v).  For the upper bound, the numerator factors are smaller than $1$, while
\[
 \prod_{i=1}^{r}(1-q^{-i})
 \ge \frac12\frac34
 \left(1-\sum_{i=3}^{\infty}2^{-i}\right)
 =\frac9{32}>\frac14.
\]
\end{proof}

The following finite estimate replaces all asymptotic separation arguments.

\begin{lemma}\label{lem:gaussian-difference}
If $M,t,r$ are positive integers and $M\ge t+r$, then
\begin{align*}
 \left(1-4q^{-(M-t-r+2)}\right)
 \qbinom{t}{1}\qbinom{M-1}{r-1}
 \le \qbinom{M}{r}-q^{tr}\qbinom{M-t}{r}
 \le \qbinom{t}{1}\qbinom{M-1}{r-1}.
\end{align*}
\end{lemma}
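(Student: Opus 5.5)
We start from the telescoping identity of Lemma~\ref{lem:gaussian}(ii), applied with $N=M$, $h=t$ and $r$. Since $M-t\ge r$, it gives
\[
 \qbinom{M}{r}-q^{tr}\qbinom{M-t}{r}
 =\sum_{j=0}^{t-1}q^{jr}\qbinom{M-j-1}{r-1}.
\]
Both bounds will come from comparing each summand with $q^{j}\qbinom{M-1}{r-1}$, because
\[
 \qbinom{t}{1}=\sum_{j=0}^{t-1}q^j .
\]
This reduces everything to estimating the ratio
\[
 \rho_j=\frac{q^{j(r-1)}\qbinom{M-j-1}{r-1}}{\qbinom{M-1}{r-1}},
\]
since the $j$-th summand equals $q^j\rho_j\qbinom{M-1}{r-1}$. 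If $r=1$, every $\rho_j=1$ and both sides coincide, so we may assume $r\ge2$.

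\textbf{Upper bound.} Each $r-1$ factor of the quotient has the form
\[
 \frac{q^{M-j-1-i}-1}{q^{M-1-i}-1}\le q^{-j}, \qquad 0\le i\le r-2,
\]
because $q^{j}(q^{a-j}-1)=q^{a}-q^{j}\le q^{a}-1$. Hence $\rho_j\le1$, and summing gives the upper bound. Equivalently, $q^{jr}\qbinom{M-j-1}{r-1}$ counts the $r$-subspaces meeting a fixed $t$-space in a prescribed way, and the upper bound is a union bound over the points of that $t$-space. The algebraic route avoids this.

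\textbf{Lower bound.} Write
\[
 1-\rho_j=1-\prod_{i=0}^{r-2}\frac{q^{a_i}-q^{j}}{q^{a_i}-1},
 \qquad a_i=M-1-i .
\]
Each factor equals $1-\frac{q^{j}-1}{q^{a_i}-1}$, and
\[
 \frac{q^{j}-1}{q^{a_i}-1}\le q^{j-a_i}\cdot\frac{1}{1-q^{-a_i}}.
\]
Using $1-\prod(1-x_i)\le\sum x_i$, we get
\[
 1-\rho_j\le\sum_{i=0}^{r-2}\frac{q^{j-(M-1-i)}}{1-q^{-(M-1-i)}} .
\]
The smallest exponent occurs at $i=r-2$, where $a_{r-2}=M-r+1\ge t+1\ge2$, so the denominators are at least $1-q^{-2}\ge 3/4$. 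The geometric sum over $i$ is at most $q^{j-(M-r+1)}\cdot\frac{1}{1-q^{-1}}$. Altogether,
\[
 1-\rho_j\le\frac{8}{3}\,q^{j-(M-r+1)}\le\frac{8}{3}\,q^{(t-1)-(M-r+1)}
 =\frac83\,q^{-(M-t-r+2)} ,
\]
using $j\le t-1$. Since $8/3<4$, this gives $\rho_j\ge1-4q^{-(M-t-r+2)}$ for every $j$. Multiplying by $q^{j}\qbinom{M-1}{r-1}$ and summing over $j$ yields the lower bound.

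The only delicate step is controlling the constant: the uniform bound at $j=t-1$ has to stay below $4$ for all $q\ge2$. If the crude geometric bound turns out too lossy for small $M-r$, we would instead bound the product exactly, using $\prod_i(1-q^{j-a_i})\ge 1-\sum_i q^{j-a_i}$ together with $q^{j}-1<q^{j}$ directly. When $1-4q^{-(M-t-r+2)}$ is negative (only possible for $M-t-r+2\le1$, which is excluded since $M\ge t+r$), the lower bound is trivial anyway.
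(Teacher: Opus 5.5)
Your proof is correct and follows the paper's argument. Both proofs start from the telescoping identity, divide each summand by $q^j\qbinom{M-1}{r-1}$, bound the product factors by $1$ for the upper bound, and use $\prod(1-x_i)\ge 1-\sum x_i$ with a geometric sum for the lower bound; your exact factor form $1-\frac{q^j-1}{q^{a_i}-1}$ just gives the sharper constant $8/3$ in place of the paper's $4$, and the closing hedge is unnecessary.
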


\begin{proof}
By \eqref{eq:telescoping-gaussian}, the middle expression is
\[
 \sum_{j=0}^{t-1}q^{jr}\qbinom{M-j-1}{r-1}.
\]
For $0\le j\le t-1$, division by $\qbinom{M-1}{r-1}$ and the product
formula give
\[
 q^j\prod_{u=0}^{r-2}
 \frac{1-q^{-(M-j-1-u)}}{1-q^{-(M-1-u)}}.
\]
Every factor in the product is at most $1$, proving the upper bound after
summing $q^j$.  For the lower bound, each factor is at least
$1-2q^{-(M-j-1-u)}$.  The product is therefore at least
\[
 1-2\sum_{u=0}^{r-2}q^{-(M-j-1-u)}
 \ge1-4q^{-(M-t-r+2)}.
\]
Summing again proves the result.
\end{proof}

Put
\[
 G=\qbinom{n}{k},\qquad R=\qbinom{n-1}{k-1}.
\]
Counting incident pairs $(x,F)$ with $x\le F$ gives
\begin{equation}\label{eq:incidence-identity}
 \qbinom{n}{1}R=\qbinom{k}{1}G.
\end{equation}

\begin{obs}\label{obs:monotone}
If $\calA\subseteq\calB\subsetneq\vbinom{V}{k}$, then
\[
 \sigma_{k,q}(\calA)\le\sigma_{k,q}(\calB).
\]
\end{obs}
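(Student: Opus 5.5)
The argument compares the two minima in the definition \eqref{eq:ore-definition} directly: degrees are monotone in the family, and the set of admissible nonmembers can only shrink when the family grows.

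First we check that both quantities are defined. Since $\calB\subsetneq\vbinom{V}{k}$, the set $\vbinom{V}{k}\setminus\calB$ is nonempty. Since $\calA\subseteq\calB$, we also have $\calA\ne\vbinom{V}{k}$, so $\sigma_{k,q}(\calA)$ makes sense.

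Next, pointwise monotonicity of degrees. For every projective point $x$,
\[
 d_{\calA}(x)=|\{F\in\calA:x\le F\}|\le|\{F\in\calB:x\le F\}|=d_{\calB}(x),
\]
because the first set is contained in the second. Summing over the $\qbinom{k}{1}$ points of any $k$-subspace $S$ gives $d_{\calA}(S)\le d_{\calB}(S)$.

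Finally, we compare the minima. Choose $S\in\vbinom{V}{k}\setminus\calB$ attaining $\sigma_{k,q}(\calB)=d_{\calB}(S)$. Since $\calA\subseteq\calB$, this $S$ is not in $\calA$ either, so it is an admissible competitor in the minimum defining $\sigma_{k,q}(\calA)$. Therefore
\[
 \sigma_{k,q}(\calA)\le d_{\calA}(S)\le d_{\calB}(S)=\sigma_{k,q}(\calB).
\]

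No step here is a genuine obstacle. The only points needing care are the order of the inclusion of nonmember sets, $\vbinom{V}{k}\setminus\calB\subseteq\vbinom{V}{k}\setminus\calA$, and the hypothesis $\calB\ne\vbinom{V}{k}$, which guarantees that a minimizer $S$ exists.
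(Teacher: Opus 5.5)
Your proof is correct and follows the paper's argument: every nonmember of $\calB$ is a nonmember of $\calA$, degrees are monotone under inclusion, and so restricting to nonmembers of $\calB$ can only raise the minimum for $\calA$. Picking an explicit minimizer $S$ for $\sigma_{k,q}(\calB)$, rather than comparing the two minima over $S\notin\calB$ directly as the paper does, is an inessential variation.
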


\begin{proof}
Every nonmember of $\calB$ is a nonmember of $\calA$, and
$d_{\calA}(S)\le d_{\calB}(S)$ for every such $S$.  Therefore
\[
 \sigma(\calA)
 \le\min_{S\notin\calB}d_{\calA}(S)
 \le\min_{S\notin\calB}d_{\calB}(S)=\sigma(\calB).
\]
\end{proof}

The next lemma is the projective counterpart of the basic quadratic
Ore-degree inequality for uniform set systems.

\begin{lemma}\label{lem:ore-size}
If $\calF\subsetneq\vbinom{V}{k}$, then
\begin{equation}\label{eq:ore-size}
 |\calF|\ge
 \frac{\qbinom{n}{1}}{\qbinom{k}{1}^{2}}\,\sigma_{k,q}(\calF).
\end{equation}
Equality can hold only if all projective points have the same degree in
$\calF$.
\end{lemma}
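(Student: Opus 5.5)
The plan is a double count of the quantity $\sum_S d_{\calF}(S)$ over all $k$-subspaces $S$, with Cauchy--Schwarz applied to the part of the sum coming from members of $\calF$. Write $f=|\calF|$, and recall $G=\qbinom{n}{k}$ and $R=\qbinom{n-1}{k-1}$.

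\textbf{Step 1: the total sum.} Each projective point $x$ lies in exactly $R$ subspaces $S\in\vbinom{V}{k}$. Exchanging the order of summation gives
\[
 \sum_{S\in\vbinom{V}{k}} d_{\calF}(S)=R\sum_{x}d_{\calF}(x)=R\qbinom{k}{1}f ,
\]
where the last equality counts the incident pairs $(x,F)$ with $F\in\calF$.

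\textbf{Step 2: members and nonmembers.} Split the total sum into members and nonmembers. For members,
\[
 \sum_{F\in\calF}d_{\calF}(F)=\sum_x d_{\calF}(x)^2\ge \frac{\bigl(\qbinom{k}{1}f\bigr)^2}{\qbinom{n}{1}}
\]
by Cauchy--Schwarz over the $\qbinom{n}{1}$ points. For nonmembers, each of the $G-f$ terms is at least $\sigma_{k,q}(\calF)$ by definition. Hence
\[
 (G-f)\,\sigma_{k,q}(\calF)\le R\qbinom{k}{1}f-\frac{\qbinom{k}{1}^2f^2}{\qbinom{n}{1}} .
\]

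\textbf{Step 3: simplifying.} Using \eqref{eq:incidence-identity} to substitute $R=\qbinom{k}{1}G/\qbinom{n}{1}$, the right-hand side factors as
\[
 \frac{\qbinom{k}{1}^2}{\qbinom{n}{1}}\,f\,(G-f).
\]
Since $\calF$ is a proper subfamily, $G-f>0$. Dividing by $G-f$ yields \eqref{eq:ore-size}.

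\textbf{Step 4: equality.} Equality in \eqref{eq:ore-size} forces equality in the Cauchy--Schwarz step. That happens exactly when all the degrees $d_{\calF}(x)$ are equal.

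There is no real obstacle here. The only point needing care is the algebraic factorization in Step 3, which relies on the identity \eqref{eq:incidence-identity}.
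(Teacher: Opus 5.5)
Your proof is correct and is essentially the paper's argument. The paper sums the Ore inequality over nonmembers and writes that sum directly as $\sum_x d_{\calF}(x)\bigl(R-d_{\calF}(x)\bigr)$, which is your total $R\qbinom{k}{1}f$ minus the member contribution $\sum_x d_{\calF}(x)^2$; it then uses the same Cauchy--Schwarz bound, the same factorization via \eqref{eq:incidence-identity}, and the same equality analysis.
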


\begin{proof}
Write $f=|\calF|$ and $d_x=d_{\calF}(x)$.  Summing the defining Ore
inequality over all nonmembers of $\calF$ gives
\begin{align*}
 \sigma(\calF)(G-f)
 \le \sum_{S\notin\calF}\sum_{x\le S}d_x=\sum_{x\in\vbinom{V}{1}}d_x(R-d_x).
\end{align*}
Since each member of $\calF$ contains $\qbinom{k}{1}$ points,
\[
 \sum_xd_x=\qbinom{k}{1}f.
\]
‌By the Cauchy–Schwarz inequality,
\[
 \sum_xd_x^2\ge
 \frac{\qbinom{k}{1}^{2}f^2}{\qbinom{n}{1}},
\]
with equality only when all $d_x$ are equal.  Hence, using
\eqref{eq:incidence-identity},
\begin{align*}
 \sigma(\calF)(G-f)
 \le \qbinom{k}{1}Rf
 -\frac{\qbinom{k}{1}^{2}f^2}{\qbinom{n}{1}}=\frac{\qbinom{k}{1}^{2}}{\qbinom{n}{1}}f(G-f).
\end{align*}
Since $G-f>0$, division proves \eqref{eq:ore-size}.
\end{proof}

\begin{lemma}\label{lem:ore-disjoint-average}
Let $n\ge2k$, 
$\emptyset\ne\calF\subseteq\vbinom{V}{k}$ be intersecting and put
\[
 p=\qbinom{k}{1},\qquad N=\qbinom{n}{1}.
\]
Then
\begin{equation}\label{eq:ore-disjoint-average}
 \sigma_{k,q}(\calF)\le
 \frac{p(p-1)}{N-p}\bigl(|\calF|-1\bigr).
\end{equation}
\end{lemma}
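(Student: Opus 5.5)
The plan is to bound the minimum in \eqref{eq:ore-definition} by an average over a well-chosen set of nonmembers. I would fix any $F_0\in\calF$ and let $\calT$ be the set of $k$-subspaces $S$ with $S\cap F_0=\{0\}$. Since $\calF$ is intersecting, every $S\in\calT$ is a nonmember of $\calF$. By Lemma~\ref{lem:gaussian}(iv), $|\calT|=q^{k^2}\qbinom{n-k}{k}$, which is positive because $n\ge 2k$. In particular $\calF\ne\vbinom{V}{k}$, so $\sigma(\calF)$ is defined. Hence
\[
 \sigma(\calF)\le\frac1{|\calT|}\sum_{S\in\calT}d_{\calF}(S)
 =\frac1{|\calT|}\sum_{x\in\vbinom{V}{1}} d_{\calF}(x)\,\bigl|\{S\in\calT:x\le S\}\bigr|.
\]

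Next I would compute the inner count. It vanishes for $x\le F_0$. For $x\nleq F_0$ it is independent of $x$, because the stabilizer of $F_0$ in $GL(V)$ acts transitively on the points outside $F_0$ and preserves $\calT$. Double counting pairs $(x,S)$ with $x\le S\in\calT$ gives $p|\calT|=(N-p)\,c$, so $c=p|\calT|/(N-p)$. Therefore
\[
 \sigma(\calF)\le\frac{p}{N-p}\sum_{x\nleq F_0}d_{\calF}(x).
\]

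Finally, I would evaluate the remaining sum by interchanging the order of summation:
\[
 \sum_{x\nleq F_0}d_{\calF}(x)=\sum_{F\in\calF}\Bigl(p-\Bigl|\vbinom{F\cap F_0}{1}\Bigr|\Bigr).
\]
The term for $F=F_0$ is $0$. For every other $F$, the intersecting property gives $\dim(F\cap F_0)\ge1$, so $F\cap F_0$ contains at least one point and the term is at most $p-1$. Summing gives at most $(p-1)(|\calF|-1)$, which yields \eqref{eq:ore-disjoint-average}.

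No step is a serious obstacle. The only points needing care are the transitivity argument, which could alternatively be replaced by a direct count of subspaces disjoint from $F_0$ that contain a given $x$, and the use of $n\ge2k$ to guarantee that $\calT\neq\emptyset$.
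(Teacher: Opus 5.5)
Your proof is correct and follows the paper's argument: you average $d_{\calF}(S)$ over the $k$-subspaces disjoint from a fixed $F_0\in\calF$, and then bound $\sum_{x\nleq F_0}d_{\calF}(x)$ by $(p-1)(|\calF|-1)$ using that every other member meets $F_0$. The only cosmetic difference is how you get the ratio $p/(N-p)$: you use transitivity of the stabilizer of $F_0$ plus double counting, whereas the paper computes $A=q^{k^2}\qbinom{n-k}{k}$ and $B=q^{k(k-1)}\qbinom{n-k-1}{k-1}$ explicitly and applies the adjacent-ratio identity.
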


\begin{proof}
Fix $F_0\in\calF$ and average $d_{\calF}(S)$ over all $k$-subspaces
$S$ disjoint from $F_0$.  Every such $S$ is a nonmember of $\calF$.
Their number is
\[
 A=q^{k^2}\qbinom{n-k}{k}.
\]
For each point $x\nleq F_0$, the number of these subspaces containing $x$
is
\[
 B=q^{k(k-1)}\qbinom{n-k-1}{k-1}.
\]
Consequently,
\[
 A\,\sigma_{k,q}(\calF)
 \le B\sum_{x\nleq F_0}d_{\calF}(x).
\]
Now
\[
 \sum_{x\le F_0}d_{\calF}(x)
 =\sum_{F\in\calF}\qbinom{\dim(F\cap F_0)}{1}
 \ge |\calF|+p-1,
\]
because $F_0$ contributes $p$ and every other member contributes at
least one.  Since $\sum_xd_{\calF}(x)=p|\calF|$, it follows that
\[
 \sum_{x\nleq F_0}d_{\calF}(x)
 \le(p-1)(|\calF|-1).
\]
Finally, the adjacent-ratio identity and
$N-p=q^k\qbinom{n-k}{1}$ give
\[
 \frac BA=
 q^{-k}\frac{\qbinom{n-k-1}{k-1}}{\qbinom{n-k}{k}}
 =\frac{p}{N-p}.
\]
Substitution proves \eqref{eq:ore-disjoint-average}.
\end{proof}

\subsection{The full point-star and Hilton--Milner constructions}

\begin{prop}\label{prop:star-degree}
Let $X\in\vbinom{V}{1}$ and $n\ge k+1$.  Then
\begin{equation}\label{eq:star-ore}
 \sigma_{k,q}(\calS(X))=
 \qbinom{k}{1}\qbinom{n-2}{k-2}.
\end{equation}
If $\calF$ is a proper subfamily of $\calS(X)$, then
\[
 \sigma_{k,q}(\calF)<
 \qbinom{k}{1}\qbinom{n-2}{k-2}.
\]
\end{prop}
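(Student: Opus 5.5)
For the first assertion I will compute $d_{\calS(X)}(S)$ directly for every nonmember $S$, i.e.\ every $k$-subspace $S$ with $X\nleq S$. The degree of $X$ itself is $\qbinom{n-1}{k-1}$, but $X$ is not a point of $S$. Every point $y\ne X$ has degree $\qbinom{n-2}{k-2}$, since the members containing $y$ correspond to $(k-2)$-subspaces of $V/(X+y)$. Hence every nonmember has exactly the same value,
\[
 d_{\calS(X)}(S)=\qbinom{k}{1}\qbinom{n-2}{k-2}.
\]
Nonmembers exist because $n\ge k+1$, so $\calS(X)\ne\vbinom{V}{k}$, and this common value is the minimum in \eqref{eq:ore-definition}. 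This gives \eqref{eq:star-ore}.

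For the strict inequality, let $\calF\subsetneq\calS(X)$. By Observation~\ref{obs:monotone} we already have $\sigma(\calF)\le\sigma(\calS(X))$, so the task is to find one nonmember of $\calF$ whose degree sum drops strictly below $\qbinom{k}{1}\qbinom{n-2}{k-2}$. I will choose a subspace $F_0\in\calS(X)\setminus\calF$ and look for a $k$-subspace $S$ that avoids $X$ and shares at least one point $y$ with $F_0$.

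For such an $S$, every point $z\le S$ satisfies $z\ne X$, so
\[
 d_{\calF}(z)\le d_{\calS(X)}(z)=\qbinom{n-2}{k-2}.
\]
At $y$ the inequality is strict, because $F_0\ni y$ has been removed. Moreover $S\notin\calS(X)\supseteq\calF$, so $S$ is a genuine nonmember of $\calF$. Summing over the points of $S$ then gives $\sigma(\calF)\le d_{\calF}(S)<\qbinom{k}{1}\qbinom{n-2}{k-2}$.

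The only remaining step is the existence of $S$, which is easy and is the only place where the hypotheses are used. When $k\ge2$, the subspace $F_0$ contains a point $y\ne X$. Because $n\ge k+1$, we can take a $k$-subspace $S$ through $y$ that avoids $X$: work in $V/y$, which has dimension $n-1\ge k$, and choose a $(k-1)$-subspace there that avoids the image of $X$, which is a point. When $k=1$, the Gaussian coefficient $\qbinom{n-2}{-1}$ vanishes, so the quantity in \eqref{eq:star-ore} is $0$, and the strict inequality would be impossible. The statement is therefore implicitly for $k\ge2$, as in the surrounding theorems, and I will note this in the proof.
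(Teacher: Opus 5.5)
Your proposal is correct and follows the paper's proof essentially step for step: the common degree $\qbinom{n-2}{k-2}$ of all points $y\ne X$ gives the exact value, and a deleted member $F_0$, a point $y\le F_0$ with $y\ne X$, and a $k$-subspace through $y$ avoiding $X$ give the strict drop. Your remark that the strict inequality fails for $k=1$ (where the only proper subfamily is $\emptyset$ and $\sigma_{1,q}(\emptyset)=0$) is accurate; the paper's proof tacitly uses $k\ge2$ when it picks a point $y\ne X$ inside the deleted member.
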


\begin{proof}
If $y\ne X$ is a point, then the members of $\calS(X)$ containing $y$ are
exactly the $k$-subspaces containing the $2$-subspace $X+y$.  Hence
$d_{\calS(X)}(y)=\qbinom{n-2}{k-2}$.  Every nonmember of the star avoids
$X$, proving \eqref{eq:star-ore}.

Now let $E\in\calS(X)\setminus\calF$.  Choose a point $y\le E$ with
$y\ne X$, and choose a $k$-subspace $S$ containing $y$ but not $X$.  Then
$S\notin\calF$ and
\[
 d_{\calF}(y)<\qbinom{n-2}{k-2},
\]
while every other point of $S$ has degree at most
$\qbinom{n-2}{k-2}$.  Thus $d_{\calF}(S)$ is strictly smaller than the
right-hand side of \eqref{eq:star-ore}.
\end{proof}

\begin{prop}\label{prop:hm-degree}
Let $X<M\le V$ with $\dim X=1$ and $\dim M=k+1$.  If $n\ge2k+1$, then
\begin{equation}\label{eq:HM-ore-exact}
 \sigma_{k,q}(\calH_1(X,M))=
 \qbinom{k}{1}
 \left(
 \qbinom{n-2}{k-2}
 -q^{k(k-2)}\qbinom{n-k-2}{k-2}
 \right).
\end{equation}
\end{prop}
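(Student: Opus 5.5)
The plan is to compute every projective point degree in $\calH_1(X,M)$ explicitly and then minimise $d_{\calH_1}(S)$ over the nonmembers $S$. Write $\calH=\calH_1(X,M)$, $p=\qbinom{k}{1}$ and
\[
 D=\qbinom{n-2}{k-2}-q^{k(k-2)}\qbinom{n-k-2}{k-2}.
\]
The target identity \eqref{eq:HM-ore-exact} is then $\sigma(\calH)=pD$. I would prove it in two parts: first $d_{\calH}(S)\ge pD$ for every nonmember $S$, then exhibit one nonmember attaining equality.

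\textbf{Degrees of points.} There are three kinds of points.
\begin{enumerate}[(a)]
\item Let $y\le M$ with $y\ne X$. Every $k$-subspace containing the $2$-space $X+y\le M$ meets $M$ in dimension at least $2$, so all $\qbinom{n-2}{k-2}$ of them lie in $\calH$. The remaining members through $y$ are the $k$-subspaces of $M$ containing $y$ but not $X$; there are $\qbinom{k}{k-1}-\qbinom{k-1}{k-2}=q^{k-1}$ of these. Hence
\[
 d_{\calH}(y)=\qbinom{n-2}{k-2}+q^{k-1}>D.
\]
\item Let $y\nleq M$, and put $T=X+y$, so that $T\cap M=X$. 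A member through $y$ cannot lie in $M$, so it lies in the outer part. It therefore contains $T$, and we must exclude those $F\ge T$ with $F\cap M=X$. In the quotient $V/T$, of dimension $n-2$, these correspond to $(k-2)$-subspaces meeting the $k$-space $(M+T)/T$ trivially. By Lemma~\ref{lem:gaussian}(iv) there are $q^{k(k-2)}\qbinom{n-k-2}{k-2}$ of them. Hence $d_{\calH}(y)=D$.
\item The point $X$ itself. Every outer member through any $y\nleq M$ also contains $X$, so $d_{\calH}(X)\ge D$.
\end{enumerate}
The counting in (b) is the only step needing care, and I expect it to be the main obstacle. It relies on the reduction to the quotient $V/T$ together with Lemma~\ref{lem:gaussian}(iv).

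\textbf{Lower bound over nonmembers.} A nonmember $S$ is either (1) a subspace with $X\nleq S$ and $S\nleq M$, or (2) one with $X\le S$ and $S\cap M=X$.
\begin{enumerate}[(1)]
\item Every point of $S$ has degree at least $D$, by (a) and (b). So $d_{\calH}(S)\ge pD$, with equality exactly when no point of $S$ lies in $M$, that is, when $S\cap M=\{0\}$.
\item Here $S$ consists of $X$ and $p-1$ points outside $M$. Thus $d_{\calH}(S)=d_{\calH}(X)+(p-1)D\ge pD$ by (c).
\end{enumerate}

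\textbf{Attainment.} Since $n\ge 2k+1$ gives $n-(k+1)\ge k$, a $k$-subspace $S$ with $S\cap M=\{0\}$ exists, for instance inside a complement of $M$. Such an $S$ avoids $X$ and is not contained in $M$, so it is a nonmember of type (1), and $d_{\calH}(S)=pD$ by (b). Combining this with the lower bound proves \eqref{eq:HM-ore-exact}.
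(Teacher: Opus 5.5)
Your proposal is correct and follows essentially the same route as the paper: the quotient count in $V/(X+y)$ via Lemma~\ref{lem:gaussian}(iv) gives degree exactly $D$ at every point outside $M$, every other point has degree at least $D$, and a $k$-subspace disjoint from $M$ (which exists because $n\ge2k+1$) attains $pD$. Your exact value $\qbinom{n-2}{k-2}+q^{k-1}$ for points of $M$ other than $X$, and your split of the nonmembers into two types, are finer than needed. The uniform bound $d_{\calH}(x)\ge D$ for every point already gives $d_{\calH}(S)\ge pD$ for all nonmembers $S$.
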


\begin{proof}
Fix a point $y\not\le M$.  There are $\qbinom{n-2}{k-2}$ $k$-subspaces
containing $X+y$.  Such a subspace fails the first condition in
\eqref{eq:HM-definition} precisely when its intersection with $M$ is $X$.
In the quotient $V/(X+y)$ this is equivalent to choosing a $(k-2)$-subspace
disjoint from the $k$-subspace $(M+X+y)/(X+y)$.  By
Lemma~\ref{lem:gaussian}(iv), the number excluded is
\[
 q^{k(k-2)}\qbinom{n-k-2}{k-2}.
\]
No member of $\vbinom{M}{k}$ contains $y$, and hence
\begin{equation}\label{eq:HM-outside-point-degree}
 d_{\calH_1(X,M)}(y)=
 \qbinom{n-2}{k-2}
 -q^{k(k-2)}\qbinom{n-k-2}{k-2}.
\end{equation}
If $y\le M$ and $y\ne X$, then every $k$-subspace containing $X+y$ belongs
to $\calH_1(X,M)$, so its degree is at least
$\qbinom{n-2}{k-2}$, which is at least the right-hand side of
\eqref{eq:HM-outside-point-degree}.  The degree of $X$ is still
larger.  Finally, $n\ge2k+1$ permits a $k$-subspace $S$ disjoint from $M$.
It is a nonmember, all its points have the degree in
\eqref{eq:HM-outside-point-degree}, and summing over its
$\qbinom{k}{1}$ points proves \eqref{eq:HM-ore-exact}.
\end{proof}

\begin{prop}
\label{prop:h3-degree}
Let $k=3$, let $Z$ be a $3$-subspace, and assume $n\ge6$.  Then
\begin{equation}\label{eq:H3-ore}
 \sigma_{3,q}(\calH_3(Z))=\qbinom{3}{1}^{2}.
\end{equation}
Moreover,
\[
 \qbinom{n-2}{1}-q^3\qbinom{n-5}{1}=\qbinom{3}{1}.
\]
\end{prop}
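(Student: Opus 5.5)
The plan is to compute point-degrees in $\calH_3(Z)$ directly, classify points by whether they lie in $Z$, and then minimize over nonmembers $S$ according to $\dim(S\cap Z)\in\{0,1\}$. (Nonmembers are exactly the $3$-subspaces meeting $Z$ in dimension at most $1$.)

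First I will compute the degree of a point $y\nleq Z$. A $3$-subspace $F\ni y$ with $\dim(F\cap Z)\ge2$ contains a $2$-subspace $L\le Z$ with $y\notin L$. Then $F=L+y$, because $y\notin Z$ forces $F\cap Z=L$. Conversely, every $2$-subspace $L\le Z$ gives such an $F$. Hence
\[
 d(y)=\qbinom{3}{2}=\qbinom{3}{1}.
\]

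Next I will take a point $y\le Z$. Every $3$-subspace $F$ containing $y$ and a line $L$ of $Z$ through $y$ belongs to the family. Counting the lines of $Z$ through $y$ and then the extensions of each line to $3$-spaces, with overlaps only at $F=Z$, gives
\[
 d(y)=\qbinom{2}{1}\Bigl(\qbinom{n-2}{1}-1\Bigr)+1.
\]
This is far larger than $\qbinom{3}{1}$ when $n\ge6$. In fact only the bound $d(y)\ge\qbinom{3}{1}$ is needed, so the exact count is not essential.

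Now I minimize $d(S)$ over nonmembers. If $S\cap Z=\{0\}$, which is possible since $n\ge6$, then all $\qbinom{3}{1}$ points of $S$ have degree $\qbinom{3}{1}$, so $d(S)=\qbinom{3}{1}^2$. If $\dim(S\cap Z)=1$, then one point of $S$ has degree at least $\qbinom{3}{1}$ and the other points have degree exactly $\qbinom{3}{1}$, so $d(S)\ge\qbinom{3}{1}^2$. Therefore the minimum is $\qbinom{3}{1}^2$, which proves \eqref{eq:H3-ore}.

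For the identity, I apply Lemma~\ref{lem:gaussian}(ii) with $N=n-2$, $r=1$, $h=3$. This gives
\[
 \qbinom{n-2}{1}-q^3\qbinom{n-5}{1}=\sum_{j=0}^{2}q^j\qbinom{n-3-j}{0}=1+q+q^2=\qbinom{3}{1}.
\]
The identity can also be checked directly: $\frac{(q^{n-2}-1)-(q^{n-2}-q^3)}{q-1}=\frac{q^3-1}{q-1}$. It shows that this value coincides with the $k=3$ case of Proposition~\ref{prop:hm-degree}.

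I expect no real obstacle here. The only care needed is in the degree count for points outside $Z$: the map $L\mapsto L+y$ must be a bijection, which holds because $y\notin Z$ gives $F\cap Z=L$.
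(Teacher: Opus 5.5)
Your proof is correct and follows the paper's own argument. The bijection $L\mapsto L+y$ gives degree $\qbinom{3}{1}$ for every point off $Z$, and points of $Z$ have larger degree. A $3$-subspace disjoint from $Z$, which exists since $n\ge6$, attains the minimum, and the identity reduces to $1+q+q^2$. Your exact count $\qbinom{2}{1}\bigl(\qbinom{n-2}{1}-1\bigr)+1$ for points of $Z$ and your appeal to Lemma~\ref{lem:gaussian}(ii) add detail the paper leaves implicit, but they do not change the route.
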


\begin{proof}
For a point $y\not\le Z$, a member of $\calH_3(Z)$ containing $y$ is
uniquely of the form $L+y$, where $L$ is a $2$-subspace of $Z$.  Thus
$d_{\calH_3(Z)}(y)=\qbinom{3}{2}=\qbinom{3}{1}$.  Points in $Z$ have larger
degree, and a $3$-subspace disjoint from $Z$ witnesses
\eqref{eq:H3-ore}.  Finally,
\[
 \qbinom{n-2}{1}-q^3\qbinom{n-5}{1}
 =1+q+q^2=\qbinom{3}{1}.
\]
\end{proof}

\subsection{The matching construction}

\begin{prop}\label{prop:B-degree}
Let $W\le V$ with  $\dim W=m\ge1$, and assume $n\ge k+m$.  Then
\begin{enumerate}[\rm(i)]
\item
$|\calB_q(V,k;W)|=\qbinom{n}{k}-q^{mk}\qbinom{n-m}{k}$;
\item $\nu_{\oplus}(\calB_q(V,k;W))\le m$;
\item
$\sigma_{k,q}(\calB_q(V,k;W))=\qbinom{k}{1}D_q(n,k,m)$.
\end{enumerate}
If $n\ge mk$, then the matching number in (ii) is exactly $m$.
\end{prop}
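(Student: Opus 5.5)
Part (i) is a complement count.  A $k$-subspace fails to lie in $\calB_q(V,k;W)$ exactly when it meets $W$ trivially.  By Lemma~\ref{lem:gaussian}(iv), applied with $L=W$, $\ell=m$, $r=k$, $N=n$, there are $q^{mk}\qbinom{n-m}{k}$ such subspaces.  Subtracting this from $\qbinom{n}{k}$ gives (i).  Part (ii) is the observation already made after \eqref{eq:B-family}.  Suppose $F_1,\dots,F_{m+1}\in\calB_q(V,k;W)$ had $\dim(F_1+\cdots+F_{m+1})=(m+1)k$.  Pick nonzero $w_i\in F_i\cap W$.  Since the sum is direct, the $w_i$ are linearly independent, giving $m+1$ independent vectors in the $m$-space $W$, which is absurd.

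For (iii), I will first compute the degree of a point $y\nleq W$.  The $k$-subspaces through $y$ correspond to $(k-1)$-subspaces of $V/y$, and $W$ maps isomorphically onto an $m$-subspace $(W+y)/y$.  A $k$-subspace $F\ni y$ meets $W$ trivially iff $F/y$ meets $(W+y)/y$ trivially.  Here I must check that if $F\cap W=\{0\}$ then $F/y\cap (W+y)/y=0$ as well.  Indeed, if $f=w+\lambda y$ with $f\in F$, then $w\in F\cap W=0$, so the class is zero; the converse is clear.  Lemma~\ref{lem:gaussian}(iv) in the $(n-1)$-dimensional space $V/y$ then gives $d(y)=\qbinom{n-1}{k-1}-q^{m(k-1)}\qbinom{n-m-1}{k-1}=D_q(n,k,m)$.

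A point $x\le W$ lies in every $k$-subspace containing it, so $d(x)=\qbinom{n-1}{k-1}\ge D_q(n,k,m)$.  Every nonmember $S$ is disjoint from $W$, so all its $\qbinom{k}{1}$ points have degree exactly $D_q(n,k,m)$, giving $d(S)=\qbinom{k}{1}D_q(n,k,m)$.  A nonmember exists since $n\ge k+m$.  Hence the minimum equals this value, and (iii) follows.

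For the final sentence, with $n\ge mk$, I will exhibit $m$ members with direct sum.  Take a basis $w_1,\dots,w_m$ of $W$ and extend to a basis of $V$.  Since $n\ge mk$, there are $m(k-1)$ further basis vectors.  I will partition them into blocks $B_1,\dots,B_m$ of size $k-1$ and set $F_i=\langle w_i\rangle+\langle B_i\rangle$.  Each $F_i$ is a $k$-subspace meeting $W$ nontrivially, and the $F_i$ are spanned by disjoint parts of a basis, so their sum has dimension $mk$.  Together with (ii), this gives matching number exactly $m$.  The main point requiring care is the quotient identification in (iii); everything else is routine counting.
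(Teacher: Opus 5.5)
Your proposal is correct and follows the paper's proof essentially step for step: the complement count via Lemma~\ref{lem:gaussian}(iv) for (i), the independent vectors $w_i\in F_i\cap W$ for (ii), the passage to $V/y$ for the degree of a point outside $W$ in (iii), and an explicit construction of $m$ members with direct sum for the final claim. Your basis-block choice of the $F_i$ is the same as the paper's complement $U$ containing $(k-1)$-subspaces with direct sum, and your checks of the quotient correspondence and of the existence of a nonmember when $n\ge k+m$ supply details the paper leaves implicit.
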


\begin{proof}
Part (i) follows from Lemma~\ref{lem:gaussian}(iv), since the complement of
$\calB_q(V,k;W)$ is the family of $k$-subspaces disjoint from $W$.

Suppose, for a contradiction, that
$F_1,\ldots,F_{m+1}\in\calB_q(V,k;W)$ form a direct-sum matching.  For each
$i\in\{1,\ldots,m+1\}$, choose $0\ne w_i\in F_i\cap W$.  The direct-sum
condition would force $w_1,\ldots,w_{m+1}$ to be linearly independent in
the $m$-dimensional space $W$, a contradiction.  This proves (ii).

Fix a point $x\not\le W$.  Among all $k$-subspaces containing $x$, those
which avoid $W$ correspond in $V/x$ to $(k-1)$-subspaces disjoint from the
$m$-subspace $(W+x)/x$.  Therefore
\[
 d_{\calB_q(V,k;W)}(x)
 =\qbinom{n-1}{k-1}
 -q^{m(k-1)}\qbinom{n-m-1}{k-1}
 =D_q(n,k,m).
\]
Every point of $W$ has degree $\qbinom{n-1}{k-1}$.  Every nonmember of
$\calB_q(V,k;W)$ is disjoint from $W$, so all its $\qbinom{k}{1}$ points have degree
$D_q(n,k,m)$.  This proves (iii).

Finally, suppose $n\ge mk$.  Choose independent points
$x_1,\ldots,x_m$ spanning $W$ and a complement $U$ of $W$.  The space $U$
contains subspaces $A_1,\ldots,A_m$ of dimension $k-1$ whose sum is direct.
Then $F_i=x_i+A_i$ belong to $\calB_q(V,k;W)$ and their sum is direct.
\end{proof}

\section{Structural inputs}\label{sec:inputs}

This section records the external extremal results used below and briefly
mentions the recent low-dimensional structure theorem relevant to the
critical range.  We first need the complete vector-space Hilton--Milner
theorem.  The remaining binary boundary case left open in
\cite{BlokhuisEtAl} was settled in \cite{WangXuZhang}.

\begin{thm}[Blokhuis et al.; Wang--Xu--Zhang]
\label{thm:vector-HM}
Let $k\ge3$ and $n\ge2k+1$.  If
$\calF\subseteq\vbinom{V}{k}$ is nontrivial and intersecting, then
\[
 |\calF|\le
 \qbinom{n-1}{k-1}
 -q^{k(k-1)}\qbinom{n-k-1}{k-1}+q^k.
\]
\end{thm}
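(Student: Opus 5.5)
Since this is the cited theorem of Blokhuis et al.\ and Wang--Xu--Zhang, the plan below reconstructs the standard covering-number argument from their work rather than anything developed in this paper.

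\textbf{Setup.} Let $\calF$ be nontrivial intersecting, and let $\tau(\calF)$ be the least dimension of a subspace meeting every member of $\calF$. Nontriviality gives $\tau(\calF)\ge2$.

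\textbf{Case $\tau(\calF)\ge3$.} Here we use the branching bound. For any subspace $T$ of dimension $t<\tau$, there is a member $F\in\calF$ with $F\cap T=\{0\}$. Every member containing $T$ must meet $F$, so at most $\qbinom{k}{1}$ choices extend $T$ by one dimension. Iterating from $T=\{0\}$ gives
\[
|\calF|\le\qbinom{k}{1}^{3}\qbinom{n-3}{k-3}.
\]
We then check, with Lemma~\ref{lem:gaussian-difference} and Lemma~\ref{lem:gaussian}(v), that this is below the claimed bound for $n\ge2k+1$. The claimed bound is $\approx\qbinom{k}{1}\qbinom{n-2}{k-2}$, which exceeds the branching bound by a factor of order $q^{n-2k}$ up to constants. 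This comparison is tight only for very small $q$ and $n=2k+1$, where a sharper branching count (using two disjoint members) is needed.

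\textbf{Case $\tau(\calF)=2$.} Take a point $X$ of maximum degree and split $\calF=\calF_X\cup\calF'$, where $\calF'$ consists of the members not containing $X$; nontriviality gives $\calF'\neq\emptyset$. Each $B\in\calF'$ spans a $(k+1)$-space $M_B=B+X$. Every member of $\calF_X$ meets $B$, and by Lemma~\ref{lem:gaussian}(iv) the $k$-spaces through $X$ meeting $B$ number exactly
\[
\qbinom{n-1}{k-1}-q^{k(k-1)}\qbinom{n-k-1}{k-1}.
\]
\begin{enumerate}[(a)]
\item If all $B\in\calF'$ lie in a single $(k+1)$-space $M$, then $\calF'$ consists of $k$-subspaces of $M$ not through $X$. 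There are exactly $q^k$ such subspaces, so $|\calF'|\le q^k$ and $|\calF_X|$ is at most the count above. This gives the bound, with the Hilton--Milner family attaining it.
\item Otherwise $\calF'$ contains $B_1,B_2$ with $M_{B_1}\ne M_{B_2}$. Members of $\calF_X$ must meet both, so the number of $k$-spaces through $X$ meeting $B_1$ but missing $B_2$ is a loss. We count this loss in the quotient $V/X$: it is at least of order $q^{k(k-1)-?}\qbinom{n-3}{k-3}$, and precisely computable via Lemma~\ref{lem:gaussian}(iv) applied to $B_1\cap B_2$.
\end{enumerate}
In case (b) the task is to show that this loss exceeds the possible excess of $|\calF'|$ beyond $q^k$. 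To bound $|\calF'|$, note that every member of $\calF'$ meets $X+y$ for a fixed covering line, and that $X$ has maximum degree.

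\textbf{Main obstacle.} The difficult step is the boundary $n=2k+1$, especially $q=2$. There the loss in case (b) and the surplus $|\calF'|-q^k$ are of comparable size, and the crude estimates fail. I would first try a refined count of the members of $\calF'$ through the line $X+y$, separated according to their intersection dimension with a fixed $B_1$, combined with a double count over the set of $(k+1)$-spaces $M_B$. This is presumably where Wang--Xu--Zhang's extra argument enters.
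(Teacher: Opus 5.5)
Your sketch is not yet a proof. Both cases contain a genuine gap, and together these gaps are the whole difficulty of the theorem. (The paper does not reprove it; it imports it from \cite{BlokhuisEtAl,WangXuZhang}.)

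\textbf{The case $\tau(\calF)\ge3$.} Your branching bound is correct, but the comparison loses a factor $\qbinom{k}{1}$:
\[
 \frac{\qbinom{k}{1}^{3}\qbinom{n-3}{k-3}}{\qbinom{k}{1}\qbinom{n-2}{k-2}}
 =\frac{\qbinom{k}{1}^{2}\qbinom{k-2}{1}}{\qbinom{n-2}{1}}.
\]
This ratio is of order $q^{3k-2-n}$, not $q^{2k-n}$. So at $n=2k+1$ the branching bound exceeds the Hilton--Milner bound for every $q$ and every $k\ge3$. For $k=3$, $n=7$ it is $(q^2+q+1)^3=q^6+3q^5+6q^4+\cdots$ against $q^6+2q^5+3q^4+3q^3+2q^2+q+1$. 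For $(q,k,n)=(3,4,9)$ it is $23{,}296{,}000$ against $3{,}837{,}721$. This is not a small-$q$ boundary effect: the crude count only becomes competitive when $n$ is around $3k$. You need a genuinely sharper covering-number estimate. One example, for $\tau=3<k$ and with $p=\qbinom{k}{1}$, is the bound $\bigl((q+1)p^2+q^4p\bigr)\qbinom{n-3}{k-3}$ from \cite[Lemma~3.8]{CaoLvWangZhou}, used in Lemmas~\ref{lem:HM-critical-structure} and~\ref{lem:HM-boundary-structure}; it replaces one factor $p$ by roughly $q+1$. Even this fails at $q=2$, $n=2k+1$: at $(q,k,n)=(2,4,9)$ it gives $57{,}645$ against $35{,}731$. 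That is exactly the case left open in \cite{BlokhuisEtAl} and settled in \cite{WangXuZhang} via a Kruskal--Katona-type theorem for $q$-Kneser graphs.

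\textbf{The case $\tau(\calF)=2$.} Subcase (a) is fine. Subcase (b), where the theorem actually lives, is only announced:
the loss has an undetermined exponent; no upper bound on $|\calF'|$ is proved; it is not justified that a maximum-degree point $X$ lies on a $2$-dimensional cover; and the decisive comparison is left as ``the task''.

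Worse, a loss extracted from one pair $B_1,B_2$ cannot in general pay for the excess $|\calF'|-q^k$. Take $k=3$ and the family $\calH_3(Z)$, which is nontrivial with $\tau=2$. Its maximum-degree points lie in $Z$. For such an $X$, the members avoiding $X$ are the $q^2\bigl(\qbinom{n-2}{1}-1\bigr)$ $3$-subspaces $F\ne Z$ with $F\cap Z$ a $2$-subspace not containing $X$. Together with $X$ these span many different $4$-spaces $Z+y$, so $\calH_3(Z)$ falls into (b) and its excess is most of the family. Yet $|\calH_3(Z)|=\qbinom{3}{1}\qbinom{n-2}{1}-q\qbinom{2}{1}$ equals the Hilton--Milner bound.

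Thus in (b) the bound is attained, no strict ``loss beats excess'' inequality exists, and the loss must be counted exactly and cumulatively over all of $\calF'$. The covering-number analysis the paper relies on, \cite[Lemmas~3.3--3.7]{CaoLvWangZhou}, does this by classifying the family of all $2$-dimensional covers: whether there is a unique cover, whether the covers share a point and the dimension of their span, or whether they have no common point. This produces competing configurations, namely $\calH_2(X,M,C)$ with $\dim C\ge k+2$, $\calH_3^{(k)}(Z)$, and the further size bounds listed in Lemma~\ref{lem:HM-boundary-comparison}. Each must be compared with the target separately.
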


Ihringer and Kupavskii recently proved that large $t$-intersecting families
in the natural range $n\ge2k+1$ admit a governing low-dimensional structure
\cite{IhringerKupavskii}.  Their simplification framework explains why the
natural boundary is $n=2k+1$.  For the exact numerical threshold needed
below, however, we use the more specialized covering-number estimates of
Cao, Lv, Wang and Zhou.  At $q=2$ and $n=2k+1$, the presently available
remainder estimate in \cite{IhringerKupavskii} is not small enough by itself
to imply the sharp Ore threshold; this is the binary critical case recorded
in Remark~\ref{rema:hm-binary-boundary}.

For the sharp Ore--Hilton--Milner theorem we use the $t=1$ case of the large
nontrivial intersection theorem of Cao, Lv, Wang and Zhou
\cite{CaoLvWangZhou}.

For an intersecting family $\calF$, its \emph{covering number} is
\[
 \tau(\calF)=\min\{\dim T:T\le V,\ T\cap F\ne\{0\}
                    \text{ for every }F\in\calF\}.
\]
Thus $\calF$ is nontrivial precisely when $\tau(\calF)\ge2$.

Let $\calH_2(X,M,C)$ denote Family II of \cite{CaoLvWangZhou}, where
$X\le M\le C\le V$ and
\[
 \dim X=1,\qquad \dim M=k,\qquad
 \dim C=c\in\{k+1,\ldots,2k-1,n\}.
\]
We only use that $\calH_2(X,M,C)=\calH_1(X,C)$ when $c=k+1$, and the
size formula below.
For a $3$-subspace $Z$, define for arbitrary $k\ge3$
\[
 \calH_3^{(k)}(Z)=
 \left\{F\in\vbinom{V}{k}:\dim(F\cap Z)\ge2\right\}.
\]
The relevant sizes are
\begin{align*}
 h_2(n,k,c)=&\ \qbinom{n-1}{k-1}
 -q^{(k-1)^2}\qbinom{n-k}{k-1}
 +q^{(k-1)^2}\qbinom{n-c}{2k-c-1}
 +q^k\qbinom{c-k}{1},\\
 h_3(n,k)=&\ \qbinom{3}{1}\qbinom{n-2}{k-2}
 -q\qbinom{2}{1}\qbinom{n-3}{k-3}.
\end{align*}

\begin{thm}[Cao--Lv--Wang--Zhou]\label{thm:CLWZ-structure}
Let $k\ge4$ and $n\ge2k+3$.  Suppose that
$\calF\subseteq\vbinom{V}{k}$ is a maximal nontrivial intersecting family and
\[
 |\calF|\ge f_q(n,k):=
 \qbinom{k-1}{1}\qbinom{n-2}{k-2}
 -q\qbinom{k-1}{2}\qbinom{n-3}{k-3}.
\]
Then either
\begin{enumerate}[\rm(i)]
\item $\calF=\calH_2(X,M,C)$ for some
$c\in\{k+1,\ldots,2k-1,n\}$; or
\item $k=4$ and $\calF=\calH_3^{(4)}(Z)$.
\end{enumerate}
\end{thm}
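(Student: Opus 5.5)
The statement is the $t=1$ case of the Cao--Lv--Wang--Zhou theorem, and the plan is to reconstruct its proof by a covering-number analysis of a maximal nontrivial intersecting family $\calF$ with $|\calF|\ge f_q(n,k)$. Since $\calF$ is nontrivial, $\tau(\calF)\ge2$. The first step is to exclude $\tau(\calF)\ge3$ using the standard branching bound. Fix a member $F_0$; every member meets $F_0$ in some point $x$. If $\tau\ge3$, the family $\calF_x$ of members through $x$ is not covered by $x$, so some member avoids $x$. Branching on that member yields a $2$-space $x+y$, and since $\tau\ge 3$ one further branching step is available. This gives
\[
 |\calF|\le\qbinom{k}{1}^{3}\qbinom{n-3}{k-3}.
\]
By Lemma~\ref{lem:gaussian}(v), or by the finite estimate of Lemma~\ref{lem:gaussian-difference}, this is of order $q^{3(k-1)+(k-3)(n-k)}$. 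By contrast, $f_q(n,k)\approx q^{(k-2)+(k-2)(n-k)}$. The ratio is about $q^{2k-n}$ up to constants below $4^2$, so it is strictly smaller once $n\ge2k+3$, though I expect small $q$ to require a sharper version of the count. The refinement I would use is to count, for each $2$-space $x+y$ in the branching, only the members meeting a third fixed member outside $x+y$.

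So $\tau(\calF)=2$. Let $\calT$ be the set of $2$-subspaces meeting every member; it is nonempty. The second step is a dichotomy on the configuration of $\calT$. Each $L\in\calT$ gives the bound
\[
 |\calF|\le\sum_{x\le L}d_\calF(x),
\]
and a point lying on no covering line has degree at most $\qbinom{k}{1}\qbinom{n-2}{k-2}$ restricted to members through $x$ meeting a fixed avoiding member. Comparing with $f_q(n,k)$, whose leading term is $\qbinom{k-1}{1}\qbinom{n-2}{k-2}$, should show that the members are essentially those meeting some covering line in a point. Then two cases arise.

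In the first case, all members of $\calT$ pass through a common point $X$. Let $C$ be the span of the covering lines together with the members avoiding $X$. The members avoiding $X$ must all lie in a common $k$-space $M$ together with $X$: for $k\ge4$, two such members not in a common $(k+1)$-space cut down the number of members through $X$ by a factor of order $q^{-(n-2k)}$, which contradicts the size bound. Maximality then forces $\calF$ to be the full Family~II, $\calH_2(X,M,C)$, with $c=\dim C\in\{k+1,\dots,2k-1,n\}$. The values $c$ between $2k$ and $n-1$ are excluded because in that range the family is not maximal.

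In the second case, the covering lines have no common point. Then they pairwise meet, or else a member would have to meet two skew lines, giving a cost that is again too large. Lines pairwise meeting without a common point lie in a $3$-space $Z$, and every member meets $Z$ in dimension at least $2$. Hence $\calF\subseteq\calH_3^{(k)}(Z)$, with equality by maximality. Then $|\calF|=h_3(n,k)$, and comparing $h_3(n,k)$ with $f_q(n,k)$, that is, $\qbinom{3}{1}$ against $\qbinom{k-1}{1}$ in the leading term, shows that $h_3(n,k)\ge f_q(n,k)$ only when $k=4$.

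The main obstacle I expect is the first case: proving that the members avoiding $X$ are confined to a single configuration determined by $M$ and $C$, with exact constants valid for all $q\ge2$ at $n=2k+3$. There the crude bounds from Lemma~\ref{lem:gaussian}(v) lose a factor of $4$, so I would fall back on the telescoping identity \eqref{eq:telescoping-gaussian} to make the comparisons exact.
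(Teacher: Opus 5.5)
The paper does not prove this theorem itself. It imports it from \cite{CaoLvWangZhou}, and the covering-number skeleton of that proof (their Lemmas~3.3--3.8) appears in the proofs of Lemmas~\ref{lem:HM-critical-structure} and~\ref{lem:HM-boundary-structure}. Your outline has that shape, but two steps fail.

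\textbf{First gap: excluding $\tau(\calF)\ge3$.} Put $p=\qbinom{k}{1}$ and $R=\qbinom{n-3}{k-3}$. Then
$f_q(n,k)<\qbinom{k-1}{1}\qbinom{n-2}{k-2}=\frac{\qbinom{k-1}{1}\qbinom{n-2}{1}}{\qbinom{k-2}{1}}R\approx q^{n-2}R$, while your branching bound is $p^3R\approx q^{3k-3}R$. The ratio is therefore of order $q^{3k-1-n}$, not $q^{2k-n}$.
\begin{itemize}
\item In fact $p^3R>f_q(n,k)$ for every $q$ whenever $2k+3\le n\le 3k-1$, which is a nonempty range for each $k\ge4$. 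For example, $p^3R=860625>300475=f_2(11,4)$.
\item Counting only members that meet a third fixed member is exactly what the third branching level already does, so your proposed refinement saves nothing.
\item The natural repair is to start the branching inside a minimum cover of dimension $m=\tau(\calF)$ rather than at an arbitrary member. This replaces the first factor $p$ by $\qbinom{m}{1}$. Even so, $\qbinom{3}{1}p^2R=401625$ still exceeds $f_2(11,4)$.
\item What you actually need is the sharper estimate of \cite[Lemma~3.8]{CaoLvWangZhou} that the paper quotes: $|\calF|\le p^k$ if $m=k$, and $|\calF|\le u_1(m)=\bigl(\qbinom{m-1}{1}p^{m-1}+q^{2m-2}p^{m-2}\bigr)\qbinom{n-m}{k-m}$ for $3\le m<k$, with $u_1$ decreasing in $m$. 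Here $u_1(3)=233325<f_2(11,4)$.
\end{itemize}

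\textbf{Second gap: all $2$-dimensional covers through a common point $X$.} This case rests on a false claim.
\begin{itemize}
\item Take $\calH_2(X,M,C)$ with $c\ge k+2$. It is in the conclusion and has size $h_2(n,k,c)\ge f_q(n,k)$.
\item Its members avoiding $X$ are the spaces $H+u$, where $H$ is a hyperplane of $M$ not containing $X$ and $u\in C\setminus M$.
\item With $H_1\ne H_2$ and $u_1,u_2$ independent modulo $M$, two such members span the $(k+2)$-space $M+u_1+u_2$.
\item Yet the members through $X$ are not cut down at all, because every $F\ni X$ with $\dim(F\cap M)\ge2$ meets every hyperplane of $M$.
\end{itemize}
Your confinement would leave only $c=k+1$, so it cannot produce Family~II.

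The missing idea is to let $M$ be the span of the covers through $X$ and split on $l=\dim M$.
\begin{itemize}
\item Every $2$-space $X<L\le M$ is then a cover. Indeed, a member avoiding $X$ that meets $L_1$ and $L_2$ contains a $2$-space of $L_1+L_2$, and that $2$-space meets every $2$-space of $L_1+L_2$.
\item So for a member $F\not\ni X$, the space $F\cap M$ maps onto $M/X$, giving $\dim(F\cap M)=l-1$. This forces $l\le k+1$.
\item If $l=k+1$, you get $\calH_1(X,M)$.
\item If $l\le k-1$, you get the bounds of \cite[Lemma~3.7]{CaoLvWangZhou}, which must still be compared with $f_q$.
\item If $l=k$, every member avoiding $X$ meets $M$ in a hyperplane. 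Then $C$ is the span of $M$ and these members. Intersecting with them forces every member through $X$ with $F\cap M=X$ to satisfy $C\le F+M$.
\end{itemize}

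\textbf{Smaller gap: skew covers.} Your crude exclusion of skew covers works for $k\ge5$ but not for $k=4$. There $(q+1)^2\qbinom{n-2}{2}>\qbinom{3}{1}\qbinom{n-2}{2}>f_q(n,4)$, so a finer count is needed in that case too.
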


For the matching theorem, we use the direct-sum matching stability theorem of
Feng, Shangguan, Yang and Zhang \cite{FengShangguanYangZhang}.  We reproduce
the recursive extremal family needed in the proof.  Fix a point $I$ and a
$k$-subspace $E$ with $I\cap E=\{0\}$, and put
\begin{equation}\label{eq:linear-HM-base}
 \calL_q(n,k,1)=
 \left\{F\in\vbinom{V}{k}:I\le F,\ F\cap E\ne\{0\}\right\}
 \cup\vbinom{I+E}{k}.
\end{equation}
For $m\ge2$, write $V_n=V_{n-1}\oplus I_n$, where $I_n$ is a point, and
let $\pi:V_n\to V_{n-1}$ be the canonical projection.  Define recursively
\begin{equation}\label{eq:linear-HM-recursive}
 \calL_q(n,k,m)=
 \{F\in\vbinom{V_n}{k}:I_n\le F\}
 \cup
 \{F\in\vbinom{V_n}{k}:F\cap I_n=\{0\},\
      \ \pi(F)\in\calL_q(n-1,k,m-1)\}.
\end{equation}
Let $\ell_q(n,k,m)=|\calL_q(n,k,m)|$.  Since each lift of a
$k$-subspace of $V_{n-1}$ is the graph of a linear map into $I_n$,
\begin{equation}\label{eq:ell-recurrence}
 \ell_q(n,k,m)=\qbinom{n-1}{k-1}+q^k\ell_q(n-1,k,m-1)
 \qquad(m\ge2).
\end{equation}

\begin{thm}[Feng--Shangguan--Yang--Zhang]\label{thm:linear-stability}
Let $q$ be a prime power and let $k\ge2$, $m\ge1$.  Suppose that
\[
 n\ge(2m+1)k-m+3.
\]
If $\calF\subseteq\vbinom{V}{k}$ satisfies $\nu_{\oplus}(\calF)\le m$ and
is not contained in any family $\calB_q(V,k;W)$ with $\dim W=m$, then
\[
 |\calF|\le\ell_q(n,k,m).
\]
\end{thm}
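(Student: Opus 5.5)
The statement is the stability theorem of Feng, Shangguan, Yang and Zhang, which the paper imports. If I had to prove it myself, I would argue by induction on $m$, following the recursive shape of $\calL_q(n,k,m)$ in \eqref{eq:linear-HM-recursive}. The recurrence \eqref{eq:ell-recurrence} suggests peeling off one high-degree point at each step.

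\textbf{Base case $m=1$.} Here $\nu_{\oplus}(\calF)\le1$ means exactly that $\calF$ is intersecting. A family $\calB_q(V,k;W)$ with $\dim W=1$ is a full point-star, so the hypothesis says $\calF$ is nontrivial. The threshold $n\ge 3k+2\ge 2k+1$ is met, so Theorem~\ref{thm:vector-HM} applies and gives
\[
|\calF|\le\qbinom{n-1}{k-1}-q^{k(k-1)}\qbinom{n-k-1}{k-1}+q^k .
\]
By Lemma~\ref{lem:gaussian}(iv) this equals $|\calL_q(n,k,1)|$. That is a direct count: fix $I$ and $E$; the members through $I$ meeting $E$ are counted by passing to the quotient by $I$; and the members of $\vbinom{I+E}{k}$ avoiding $I$ number $q^k$.

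\textbf{Inductive step.} Take $m\ge2$ and let $\calF$ satisfy the hypotheses. I would split according to the maximum point-degree $\Delta=\max_x d_{\calF}(x)$, using a threshold $T$ of order $(mk)\,q^{(m-1)k}\qbinom{n-2}{k-2}$. Equivalently, $T$ is large enough that a point of degree at least $T$ admits, for any fixed subspace $U$ of dimension at most $mk$ avoiding it, a member through the point meeting $U$ only in $\{0\}$ modulo the point.

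\emph{Low-degree case, $\Delta<T$.} Fix a maximum direct-sum matching $F_1,\dots,F_{m'}$ with $m'\le m$, and let $U=F_1+\cdots+F_{m'}$, which has dimension at most $mk$. By maximality every member of $\calF$ meets $U$ nontrivially, so
\[
|\calF|\le\qbinom{mk}{1}\,T .
\]
By Lemma~\ref{lem:gaussian-difference}, $\ell_q(n,k,m)\ge(1-o(1))\qbinom{m}{1}\qbinom{n-1}{k-1}$, and $\qbinom{n-1}{k-1}/\qbinom{n-2}{k-2}\approx q^{n-k}$. So for $n\ge(2m+1)k-m+3$ the bound $\qbinom{mk}{1}T$ is smaller than $\ell_q(n,k,m)$, with room to spare. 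Verifying this inequality with the explicit finite estimates is routine but fiddly.

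\emph{High-degree case, $d_{\calF}(x)\ge T$ for some point $x$.} Split $\calF=\calF_x\cup\calF'$, where $\calF_x$ is the set of members containing $x$ and $\calF'$ is the set of members avoiding $x$. Choose a complement $V_{n-1}$ of $x$ and let $\pi$ be the projection along $x$, so every member of $\calF'$ is a lift of its image. Then
\[
|\calF|\le\qbinom{n-1}{k-1}+q^k|\pi(\calF')| .
\]
Next I would show $\nu_{\oplus}(\pi(\calF'))\le m-1$. Suppose $\pi(G_1),\dots,\pi(G_m)$ were direct. Then $G_1+\cdots+G_m$ is direct and avoids $x$, and the large degree of $x$ supplies a member of $\calF_x$ whose sum with them is direct. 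That is a matching of size $m+1$, a contradiction. The dimension condition descends correctly, since $n-1\ge(2(m-1)+1)k-(m-1)+3$. So if $\pi(\calF')$ is not contained in any $\calB_q(V_{n-1},k;W')$ with $\dim W'=m-1$, induction gives $|\pi(\calF')|\le\ell_q(n-1,k,m-1)$, and the recurrence \eqref{eq:ell-recurrence} finishes this case.

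\textbf{Main obstacle.} The hardest step is the remaining sub-case, where $\pi(\calF')\subseteq\calB_q(V_{n-1},k;W')$ for some $W'$ of dimension $m-1$. Then every member of $\calF'$ meets $W'+x$, but modulo $x$ rather than genuinely. I would lift $W'$ to an $(m-1)$-subspace $\widetilde W\le V$ and try to show either that $\calF\subseteq\calB_q(V,k;\widetilde W+x)$, which is excluded by hypothesis, or that some member of $\calF$ escapes. In the second case, combining that escaping member with the high degree of $x$ and $m-1$ disjoint members from $\calF'$ should either produce a matching of size $m+1$ or force $|\calF|$ far below $\ell_q(n,k,m)$. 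The natural first attempt is to count the members of $\calF'$ whose lift misses $\widetilde W+x$ and play them against the matching bound, using Lemma~\ref{lem:gaussian}(iv). I expect the slack in the hypothesis $n\ge(2m+1)k-m+3$ to be consumed precisely in this comparison.
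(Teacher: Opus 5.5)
The paper gives no proof of this theorem; it is imported from \cite{FengShangguanYangZhang}, so your sketch has to stand on its own. Its skeleton does yield a proof: peel off a point of large degree, project along it and induct on $m$, and otherwise bound $|\calF|$ through the points of a maximal direct-sum matching. As written, though, the step you call the main obstacle is in fact vacuous, and the step you call routine is where the hypothesis on $n$ is consumed.

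For the sub-case, suppose $\pi(\calF')\subseteq\calB_q(V_{n-1},k;W')$ with $\dim W'=m-1$. For each $G\in\calF'$ pick $g\in G$ with $0\ne\pi(g)\in W'$. Since $g-\pi(g)\in x$, the vector $g$ is a nonzero element of $G\cap(W'+x)$. Every member of $\calF_x$ contains $x\le W'+x$. Hence $\calF\subseteq\calB_q(V,k;W'+x)$ with $\dim(W'+x)=m$, contradicting the hypothesis. Meeting $W'$ modulo $x$ is the same as genuinely meeting $W'+x$, so no lifting, escaping member or counting is needed.

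A smaller omission concerns the base case: Theorem~\ref{thm:vector-HM} requires $k\ge3$. For $k=2$ you need the observation from the proof of Theorem~\ref{thm:ore-ekr} that a nontrivial intersecting family of $2$-spaces lies in a $3$-space. That gives $|\calF|\le\qbinom{3}{1}=\ell_q(n,2,1)$.

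In the low-degree case both of your estimates are wrong. The correct threshold is $T_0=\qbinom{n-1}{k-1}-q^{mk(k-1)}\qbinom{n-mk-1}{k-1}$, the number of $k$-spaces through $x$ meeting $U+x$ in dimension at least $2$. By Lemma~\ref{lem:gaussian-difference} it is at most $\qbinom{mk}{1}\qbinom{n-2}{k-2}$. That is of order $q^{mk-1}\qbinom{n-2}{k-2}$, not $mk\,q^{(m-1)k}\qbinom{n-2}{k-2}$.

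Likewise $\ell_q(n,k,m)$ is of order $\qbinom{m-1}{1}\qbinom{n-1}{k-1}$, not $\qbinom{m}{1}\qbinom{n-1}{k-1}$. The expansion \eqref{eq:ell-expanded} gives $\ell_q(n,k,m)\ge(1-2q^{-10})\qbinom{m-1}{1}\qbinom{n-1}{k-1}$. The upper bound \eqref{eq:ell-finite-upper} shows this is sharp up to lower-order terms.

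With the correct quantities you must show $\qbinom{mk}{1}^2\qbinom{n-2}{k-2}<\ell_q(n,k,m)$. Using $\qbinom{n-1}{k-1}=\qbinom{n-1}{1}\qbinom{n-2}{k-2}/\qbinom{k-1}{1}$, this follows, up to factors $1-O(q^{-10})$, from
\[
 \frac{q^{2mk+k-1}}{q-1}<\frac12\,q^{n+m-2}.
\]
This does hold for $n\ge(2m+1)k-m+3$. At $q=2$, $m=2$, $n=5k+1$, however, it holds only by a factor of $2$. So there is no room to spare. This comparison, not the sub-case, is what dictates the dimension threshold, and it has to be checked with explicit finite bounds. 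With these three repairs your induction is a complete proof.
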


\section{Ore--EKR and Ore--Hilton--Milner}\label{sec:ekr-hm}

We first isolate an exact comparison for the EKR argument.

\begin{lemma}\label{lem:ekr-separation}
Let $q$ be a prime power, $k\ge3$ and $n\ge2k+1$.  Then
\[
 \frac{\qbinom{n}{1}}{\qbinom{k}{1}}\qbinom{n-2}{k-2}>
 \qbinom{n-1}{k-1}
 -q^{k(k-1)}\qbinom{n-k-1}{k-1}+q^k.
\]
\end{lemma}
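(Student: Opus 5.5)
The plan is to bound the right-hand side from above by Lemma~\ref{lem:gaussian-difference} and then compare the result with the left-hand side by an elementary estimate. Put $p=\qbinom{k}{1}$ and $N=\qbinom{n}{1}$, as in Lemma~\ref{lem:ore-disjoint-average}.

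First, apply the upper bound of Lemma~\ref{lem:gaussian-difference} with $M=n-1$, $t=k$ and $r=k-1$. Its hypothesis $M\ge t+r$ reads $n-1\ge 2k-1$, which holds since $n\ge2k+1$. This gives
\[
 \qbinom{n-1}{k-1}-q^{k(k-1)}\qbinom{n-k-1}{k-1}\le p\qbinom{n-2}{k-2}.
\]
So the right-hand side of the lemma is at most $p\qbinom{n-2}{k-2}+q^k$. It therefore suffices to prove
\[
 \frac{N-p^2}{p}\qbinom{n-2}{k-2}>q^k .
\]

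Second, bound $N-p^2$ from below by multiplying through by $(q-1)^2$. Since $N$ is increasing in $n$, we may take $n=2k+1$ and get
\[
 (q-1)^2(N-p^2)\ge (q^{2k+1}-1)(q-1)-(q^k-1)^2
 =q^{2k}(q^2-q-1)+2q^k-q-1 .
\]
Two facts now finish this step. First, $q^2-q-1\ge(q-1)^2$ is equivalent to $q\ge2$. Second, $2q^k-q-1\ge0$ because $k\ge1$. Together these give $N-p^2\ge q^{2k}$.

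Third, use the trivial bounds $\qbinom{n-2}{k-2}\ge1$ and $p=(q^k-1)/(q-1)\le q^k-1<q^k$. These yield
\[
 \frac{N-p^2}{p}\qbinom{n-2}{k-2}>\frac{q^{2k}}{q^k}=q^k ,
\]
which is the required strict inequality.

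The only step needing care is the lower bound $N-p^2\ge q^{2k}$. It must hold uniformly in $q\ge2$, and the case $q=2$ is the tight one. For that case I will check the displayed expansion directly: $(2^{2k+1}-1)-(2^k-1)^2=2^{2k}+2^{k+1}-2\ge 2^{2k}$.
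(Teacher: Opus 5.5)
Your proof is correct and is essentially the paper's argument: the same upper bound from Lemma~\ref{lem:gaussian-difference} with $(M,t,r)=(n-1,k,k-1)$, followed by an elementary comparison. Where you use the bound $N-p^2\ge q^{2k}$, the paper instead uses $\qbinom{n}{1}/\qbinom{k}{1}>q^{n-k}\ge q^{k+1}$ together with $\qbinom{k}{1}\qbinom{n-2}{k-2}+q^k<q^k\bigl(\qbinom{n-2}{k-2}+1\bigr)\le q^{k+1}\qbinom{n-2}{k-2}$. One harmless slip: $(q^{2k+1}-1)(q-1)-(q^k-1)^2$ equals $q^{2k}(q^2-q-1)+2q^k-q$, not $q^{2k}(q^2-q-1)+2q^k-q-1$, and the correct value only strengthens your inequality.
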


\begin{proof}
Put $Q=\qbinom{n-2}{k-2}$.  The upper bound in
Lemma~\ref{lem:gaussian-difference}, with
$(M,t,r)=(n-1,k,k-1)$, gives
\[
 \qbinom{n-1}{k-1}
 -q^{k(k-1)}\qbinom{n-k-1}{k-1}
 \le\qbinom{k}{1}Q.
\]
On the other hand,
\[
 \frac{\qbinom{n}{1}}{\qbinom{k}{1}}Q
 =\frac{q^n-1}{q^k-1}Q
 >q^{n-k}Q\ge q^{k+1}Q.
\]
Since $\qbinom{k}{1}<q^k$ and $Q\ge1$,
\[
 \qbinom{k}{1}Q+q^k<q^k(Q+1)\le q^{k+1}Q.
\]
Combining the three inequalities proves the claim.
\end{proof}

\begin{proof}[Proof of Theorem~\ref{thm:ore-ekr}]
Suppose first that $\calF$ is contained in a full point-star.
Proposition~\ref{prop:star-degree} gives \eqref{eq:ore-ekr-bound}, with
equality only for the full  point-star.

Suppose next that $\calF$ is not contained in a full point-star. Then it is
nontrivial.  If $k\ge3$ and
$\sigma(\calF)\ge\qbinom{k}{1}\qbinom{n-2}{k-2}$, then
Lemma~\ref{lem:ore-size} gives
\[
 |\calF|\ge
 \frac{\qbinom{n}{1}}{\qbinom{k}{1}}\qbinom{n-2}{k-2}.
\]
This contradicts Theorem~\ref{thm:vector-HM} and
Lemma~\ref{lem:ekr-separation}.

It remains to treat $k=2$.  Choose distinct $A,B\in\calF$ and put
$X=A\cap B$.  Nontriviality provides $C\in\calF$ not containing $X$.
The two points $A\cap C$ and $B\cap C$ are distinct, so
$C\le A+B$.  If $D\in\calF$ does not contain $X$, the same argument gives
$D\le A+B$; if $X\le D$, then $D$ meets $C$ in a point different from
$X$, and again $D\le A+B$.  Thus every member of $\calF$ lies in the fixed
$3$-space $A+B$, and hence $|\calF|\le\qbinom{3}{1}$.  But
$\sigma(\calF)\ge\qbinom{2}{1}$ would imply
\[
 |\calF|\ge\frac{\qbinom{n}{1}}{\qbinom{2}{1}}
 >q^{n-2}\ge q^3>\qbinom{3}{1},
\]
a contradiction.  Therefore every nontrivial family has strictly smaller
Ore-degree than a full point-star, and the theorem follows.
\end{proof}

\begin{lemma}\label{lem:HM-separation}
Let $q$ be a prime power, $k\ge4$ and $n\ge2k+3$.  For every
$c\in\{k+2,\ldots,2k-1,n\}$,
\begin{align*}
 \frac{\qbinom{n}{1}}{\qbinom{k}{1}}
 \left(
 \qbinom{n-2}{k-2}
 -q^{k(k-2)}\qbinom{n-k-2}{k-2}
 \right)
 >\max\{f_q(n,k),h_3(n,k),h_2(n,k,c)\}.
\end{align*}
\end{lemma}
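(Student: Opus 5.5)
The plan is to compare everything against the single quantity $Q_3:=\qbinom{n-3}{k-3}$. The left side is of order $q^{n}Q_3$. Each term on the right is at most of order $q^{n-2}Q_3$ or $q^{n-k+2}Q_3$, up to absolute constants. With $k\ge4$ and $n\ge2k+3$, the powers of $q$ that separate the two sides should absorb the constants $4$ coming from Lemma~\ref{lem:gaussian}(v).

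\emph{Lower bound for the left side.} I apply Lemma~\ref{lem:gaussian-difference} with $(M,t,r)=(n-2,k,k-2)$, so that $M-t-r+2=n-2k+2\ge5$. This gives
\[
 \qbinom{n-2}{k-2}-q^{k(k-2)}\qbinom{n-k-2}{k-2}\ \ge\ (1-4q^{-5})\qbinom{k}{1}Q_3 .
\]
Hence the left side is at least $(1-4q^{-5})\qbinom{n}{1}Q_3$, and this exceeds $\tfrac78 q^{n-1}Q_3$ for every $q\ge2$.

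\emph{Upper bounds for the right side.}

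For $f_q(n,k)$ and $h_3(n,k)$, I drop the negative terms. Then I use $\qbinom{n-2}{k-2}=\frac{q^{n-2}-1}{q^{k-2}-1}Q_3<2q^{n-k}Q_3$. Since $\qbinom{k-1}{1}<2q^{k-2}$, this gives $f_q<4q^{n-2}Q_3$. Similarly, since $\qbinom{3}{1}<2q^2$ and $k\ge4$, it gives $h_3<4q^{n-k+2}Q_3\le4q^{n-2}Q_3$.

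These constants are too crude when $q=2$, because $4q^{n-2}=q^{n}$ exceeds $\tfrac78q^{n-1}$. So I will use the sharper estimates $\qbinom{k-1}{1}\le\frac{q}{q-1}q^{k-2}$ and $\frac{q^{n-2}-1}{q^{k-2}-1}\le\frac{q^{k-2}}{q^{k-2}-1}q^{n-k}$, and compare with the exact factor $\qbinom{n}{1}>q^{n-1}\frac{q}{q-1}$. This yields a ratio of roughly $\frac{q^{k-2}}{q^{k-2}-1}q^{-2}\le\frac{4}{3}\cdot\frac14<1$. This fine bookkeeping for $q=2$ is the one place where care is needed.

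For $h_2(n,k,c)$, I treat its four terms separately.

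1. The first two terms are at most $\qbinom{k-1}{1}\qbinom{n-2}{k-2}$, by the upper bound of Lemma~\ref{lem:gaussian-difference} with $(M,t,r)=(n-1,k-1,k-1)$. This is the same bound as for $f_q$.

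2. The third term vanishes when $c=n$. For $k+2\le c\le 2k-1$ it is maximised at $c=k+2$, where it equals $q^{(k-1)^2}\qbinom{n-k-2}{k-3}$. By Lemma~\ref{lem:gaussian}(v) this is below $4q^{(k-1)^2+(k-3)(n-2k+1)}$. Writing $Q_3\ge q^{(k-3)(n-k)}$, it is at most $4q^{2k-2}Q_3$. That is negligible against $q^{n-1}Q_3$, because $n-1\ge 2k+2$ leaves a factor $q^{4}\ge 16$.

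3. The last term satisfies $q^k\qbinom{c-k}{1}\le q^k\qbinom{n-k}{1}<2q^{n}$. This is smaller than the main term once $Q_3$ is large. The exceptional case $k=4$ with $Q_3=\qbinom{n-3}{1}\ge q^{n-4}$ still leaves ample room.

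\emph{Assembly and main obstacle.} Summing these bounds for $h_2$ gives a quantity strictly less than $(1-4q^{-5})\qbinom{n}{1}Q_3$. The main obstacle is not structural. It is keeping the constants honest in the tightest corner, $q=2$, $k=4$, $n=2k+3=11$. There I would first try to verify the inequality directly, or to absorb it by using exact product formulas instead of the factor-$4$ bound.
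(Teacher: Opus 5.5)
\textbf{Verdict: the approach is the paper's, but the proof is incomplete.} The one step with real content, the final summation for $h_2$ at $q=2$, is asserted rather than carried out. The arithmetic you give for it is also wrong.

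Your route matches the paper's in substance. You use the same lower bound from Lemma~\ref{lem:gaussian-difference} with $(M,t,r)=(n-2,k,k-2)$, and the same three-piece treatment of $h_2(n,k,c)$. The only difference is that you normalize by $\qbinom{n-3}{k-3}$ where the paper normalizes by $Q=\qbinom{n-2}{k-2}$. Your bounds on the individual terms of $f_q$, $h_3$ and $h_2$ are all valid.

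The problems are in the $q=2$ bookkeeping:
\begin{enumerate}
\item $\qbinom{n}{1}=\frac{q^n-1}{q-1}$ is \emph{smaller} than $q^{n-1}\frac{q}{q-1}$, not larger.
\item $\qbinom{k-1}{1}\qbinom{n-2}{k-2}$ is your common bound for $f_q$ and for the first two terms of $h_2$. With your estimates, its ratio to the left side is about $\frac{q^{k-2}}{q^{k-2}-1}\,q^{-1}(1-4q^{-5})^{-1}$, not $\frac{q^{k-2}}{q^{k-2}-1}\,q^{-2}$. At $(q,k)=(2,4)$ this is $\frac{16}{21}$, not $\frac13$. This still handles $f_q$ and $h_3$ on their own.
\item The middle term of $h_2$ is therefore not negligible. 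At $q=2$, $c=k+2$, $n=2k+3$, your bound $4q^{2k-2}\qbinom{n-3}{k-3}=\frac14 q^{n-1}\qbinom{n-3}{k-3}$ takes roughly another $\frac17$ of the left side. The total is near $\frac{19}{21}$ before the last term.
\end{enumerate}
The inequality does survive, but with a margin below ten percent. So ``summing these bounds gives a quantity strictly less'' is exactly what must be verified; it is not a formality.

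\textbf{How to close it.} The paper's normalization makes the check uniform in $n$:
\begin{enumerate}
\item Since $\qbinom{n-3}{k-3}=\frac{\qbinom{k-2}{1}}{\qbinom{n-2}{1}}Q$ and $\qbinom{n}{1}/\qbinom{n-2}{1}>q^2$, your lower bound becomes $(1-4q^{-5})q^2\qbinom{k-2}{1}Q$.
\item The middle term of $h_2$ is at most $4q^{k-5}Q$, using $Q\ge q^{(k-2)(n-k)}$ and $n\ge 2k+3$.
\item The last term of $h_2$ is less than $Q$.
\item $f_q$ and $h_3$ are at most $\qbinom{k-1}{1}Q$.
\end{enumerate}
Everything then reduces to the $n$-free inequality $\Delta=(1-4q^{-5})q^2\qbinom{k-2}{1}-\qbinom{k-1}{1}-4q^{k-5}-1>0$. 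This follows from
$q^3(q-1)\Delta=q^{k-1}(q^4-q^3-4)-q^5-q^4+2q^3+4\ge q^3(q^4-q^3-q^2-q-2)+4>0$.
At $(q,k)=(2,4)$ the comparison reads $10.5>10$, which confirms that the corner you flagged is genuinely tight.
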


\begin{proof}
Put $Q=\qbinom{n-2}{k-2}$.  Lemma~\ref{lem:gaussian-difference}, with
$(M,t,r)=(n-2,k,k-2)$, and the identity
$\qbinom{n-3}{k-3}=\frac{\qbinom{k-2}{1}}{\qbinom{n-2}{1}}Q$ give
\begin{align*}
 \frac{\qbinom{n}{1}}{\qbinom{k}{1}}
 \left(Q-q^{k(k-2)}\qbinom{n-k-2}{k-2}\right)>(1-4q^{-5})q^2\qbinom{k-2}{1}Q,
\end{align*}
because $n-2k+2\ge5$ and
$\qbinom{n}{1}/\qbinom{n-2}{1}>q^2$.
Moreover,
\[
 f_q(n,k)\le\qbinom{k-1}{1}Q,
 \qquad h_3(n,k)\le\qbinom{3}{1}Q
 \le\qbinom{k-1}{1}Q.
\]
For $h_2(n,k,c)$, another application of Lemma~\ref{lem:gaussian-difference}, now
with $(M,t,r)=(n-1,k-1,k-1)$, yields
\[
 \qbinom{n-1}{k-1}-q^{(k-1)^2}\qbinom{n-k}{k-1}
 \le\qbinom{k-1}{1}Q.
\]
If $k+2\le c\le2k-1$ and $r=2k-c-1$, then $0\le r\le k-3$ and
\[
 q^{(k-1)^2}\qbinom{n-c}{r}
 <4q^{(k-1)^2+(k-3)(n-2k+1)}
 \le4q^{k-5}Q.
\]
For $c=n$ this term is zero.  Also
\[
 q^k\qbinom{c-k}{1}<Q.
\]
Indeed, for $c\le2k-1$ its left-hand side is smaller than $q^{2k-1}$,
whereas $Q\ge q^{(k-2)(k+3)}$; for $c=n$ it is smaller than $q^n$,
whereas $Q\ge q^{(k-2)(n-k)}>q^n$.  Consequently,
\[
 h_2(n,k,c)<
 \left(\qbinom{k-1}{1}+4q^{k-5}+1\right)Q.
\]
Finally, if
\[
 \Delta=(1-4q^{-5})q^2\qbinom{k-2}{1}
 -\qbinom{k-1}{1}-4q^{k-5}-1,
\]
then
\begin{align*}
 q^3(q-1)\Delta
 &=q^{k-1}(q^4-q^3-4)-q^5-q^4+2q^3+4\\
 &\ge q^7-q^6-q^5-q^4-2q^3+4\\
 &=q^3(q^4-q^3-q^2-q-2)+4>0.
\end{align*}
The asserted strict inequalities follow.
\end{proof}

\begin{lemma}\label{lem:HM-critical-comparison}
Let $q\ge3$ and $k\ge4$.  Put
\[
 p=\qbinom{k}{1},\quad N=\qbinom{2k+1}{1},\quad
 Q=\qbinom{2k-1}{k-2},\quad R=\qbinom{2k-2}{k-3},
\]
\[
 D=Q-q^{k(k-2)}\qbinom{k-1}{1},
 \qquad L=\frac{N}{p}D.
\]
For \(3\le l\le k-1\), define
\begin{align*}
 U_0&=Q+q(q+1)\qbinom{k-1}{1}pR,\\
 U_l&=\qbinom{l-1}{1}Q
 +q^{l-1}\qbinom{k-l+1}{1}pR
 +q^k\qbinom{2k+1-l}{k-l+1},\\
 U_3^*&=\bigl((q+1)p^2+q^4p\bigr)R.
\end{align*}
Then
\begin{equation}\label{eq:critical-list}
 \max\bigl\{U_0,\ U_l,\ p^k,\ h_3(2k+1,k),\
 h_2(2k+1,k,c)\bigr\}<L
\end{equation}
for every \(3\le l\le k-1\) and
\(c\in\{k+2,\ldots,2k-1,2k+1\}\).  Moreover,
\[
 U_3^*<L
\]
unless \((q,k)=(3,4)\).  In that exceptional case,
\[
 p=40,\quad N=9841,\quad R=364,\quad D=14170,\quad
 L=\frac{13944697}{4},\quad U_3^*=3508960,
\]
and
\begin{equation}\label{eq:critical-exception-average}
 \frac{p(p-1)(U_3^*-1)}{N-p}<pD.
\end{equation}
\end{lemma}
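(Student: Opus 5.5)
The plan is to reduce every inequality in \eqref{eq:critical-list} to a comparison with a single reference quantity, essentially $Q$, by obtaining a clean lower bound for $L$ and crude upper bounds for each competitor. This mirrors the proof of Lemma~\ref{lem:HM-separation}.

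\emph{Lower bound for $L$.} Here $n=2k+1$, so $\qbinom{n-k-2}{k-2}=\qbinom{k-1}{k-2}=\qbinom{k-1}{1}$. Hence $D$ is exactly the middle expression of Lemma~\ref{lem:gaussian-difference} with $(M,t,r)=(2k-1,k,k-2)$. That lemma gives
\[
 D\ge(1-4q^{-3})\,p\,\qbinom{2k-2}{k-3}=(1-4q^{-3})pR.
\]
Therefore $L\ge(1-4q^{-3})NR$, with $N>q^{2k}$. I will use the identity $R=\frac{\qbinom{k-2}{1}}{\qbinom{2k-1}{1}}Q$ to move between $Q$ and $R$ as convenient. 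Roughly, $Q\approx q^{k+1}R$ and $L\approx q^{2k}R$.

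\emph{Upper bounds for the competitors.}
\begin{enumerate}[(a)]
\item For $U_0$: $Q\approx q^{k+1}R$ and $q(q+1)\qbinom{k-1}{1}pR\lesssim q^{2k}(1+q^{-1})\cdot$(small factors). So $U_0/(q^{2k}R)$ is bounded by something like $q^{1-k}+(1+1/q)\,q^{-1}\cdot\frac{q^2}{(q-1)^2}$. This is below $1-4q^{-3}$ when $q\ge3$, and I will check it by explicit rational functions of $q$.
\item For $U_l$: the first term is at most $q^{l-1}\cdot\frac{q}{q-1}Q$. The second term is at most about $q^{k-1}\cdot q^{k-l+1}\cdot R\cdot(q/(q-1))^2$. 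For the third term, Lemma~\ref{lem:gaussian}(v) gives $q^k\qbinom{2k+1-l}{k-l+1}<4q^{k+(k-l+1)k}$, which is tiny relative to $q^{2k}R\ge q^{2k+(k-3)(k+1)}$ for $l\ge3$. Since $U_l$ is maximised at an endpoint, $l=3$ or $l=k-1$, it suffices to check those two cases, and each reduces to a polynomial inequality in $q$ of the type $q^3(q-1)\Delta>0$ used earlier.
\item The terms $p^k$ and $h_3$ are immediate, since $p^k<q^{k^2}\cdot(\text{const})$ while $L$ has exponent about $2k+(k-3)(k+2)$. The term $h_2(2k+1,k,c)$ is handled exactly as in Lemma~\ref{lem:HM-separation}, with $n-2k+2=3$ in place of $5$.
\end{enumerate}

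\emph{$U_3^*$ and the exceptional case.} We have $U_3^*\approx(q+1)q^{2k}R\,(\text{factor})+q^{k+4}R$. Comparing with $L\approx q^{2k}R$ requires the sharper estimate $p^2=\qbinom{k}{1}^2$ against $N/(q+1)$, together with the exact value of $D/(pR)$ rather than the crude $(1-4q^{-3})$. I expect this to be the main obstacle, because the margin is thin and genuinely fails at $(q,k)=(3,4)$. The approach is to write $L-U_3^*$ as an explicit expression in $q$ and $k$, bound it below for $k\ge5$ or $q\ge4$ by monotonicity in $k$ and $q$, and verify the remaining small cases numerically. For $(q,k)=(3,4)$, I will compute directly: $p=40$, $N=9841$, $Q=\qbinom{7}{2}=99463$, $R=\qbinom{6}{1}=364$, $D=99463-3^8\cdot13=14170$, $L=9841\cdot14170/40$, $U_3^*=(4\cdot1600+81\cdot40)\cdot364=3508960$. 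Finally, \eqref{eq:critical-exception-average} follows by direct arithmetic: $40\cdot39\cdot3508959/9801\approx558517<566800=pD$.
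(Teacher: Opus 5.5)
Your strategy is the paper's: bound $L\ge c\,NR$, bound each competitor by an explicit multiple of $NR$, and do exact arithmetic at $(q,k)=(3,4)$. Your lower bound $L\ge(1-4q^{-3})NR$ from Lemma~\ref{lem:gaussian-difference} with $(M,t,r)=(2k-1,k,k-2)$ is correct. It is weaker than the paper's $L>\beta_qNR$ with $\beta_q\ge17/18$, which comes from keeping the $j=1$ term of the intersection-number expansion of $D$; you have only $23/27$ at $q=3$. The lemma consists of nothing but these finite comparisons, they are tight at $k=4$, and three of your stated estimates fail exactly there, so as written the proposal does not prove it.

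\emph{The $U_0$ comparison.} You replace $N$ by $q^{2k}$, which discards a factor $q/(q-1)=3/2$ at $q=3$. In that normalization the comparison is false, not merely unproved. Your own bound $q^{1-k}+(q+1)/(q-1)^2$ exceeds $1>23/27$ at $q=3$, and the true value of $U_0/(q^{2k}R)$ at $(3,4)$ is about $0.99$.

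\emph{The third summand of $U_l$.} It is not tiny. At $l=3$ your bound $4q^{k^2-k}$, taken relative to $q^{2k}q^{(k-3)(k+1)}$, equals $4q^{3-k}$, which is $4/3$ at $(3,4)$. Its true size there is about $NR/4$.

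\emph{The term $p^k$.} It is not immediate. The exponent $2k+(k-3)(k+2)$ belongs to $n=2k+2$. Here $NR$ has order $q^{k^2-2}/(q-1)$, which is below $q^{k^2}$, so only the factor $(q-1)^{-k}$ in $p^k$ saves the comparison. The clean bound $p^k/(NR)<q^4/((q+1)(q-1)^k)$ equals $81/64$ at $(3,4)$, so that case needs an exact check, $40^4=2560000<L$, as the paper does.

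Separately, the claim that $U_l$ is maximised at an endpoint is unproved. Bound each summand by its own maximum over $l$ instead; each summand is monotone in $l$.

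All of this is repairable within your framework.
\begin{itemize}
\item For $U_0$, keep $N$ exact. The inequality $(q^{k-1}-1)(q^k-1)<(q^{2k+1}-1)/q^2$ gives $q(q+1)\qbinom{k-1}{1}p/N<(q+1)/(q(q-1))\le2/3$, and hence $U_0/(NR)<1/36+2/3$.
\item For $U_l$, the three summands are below $q^{-2}NR$, $NR/(q(q-1))$, and about $NR/4$. The last uses the exact ratio $\qbinom{2k-2}{k-2}/R=(q^{k+1}-1)/(q^{k-2}-1)$ or the constant $2$ in $\qbinom{a}{b}<2q^{b(a-b)}$ for $q\ge3$.
\item For $U_3^*$, your worry is misplaced. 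The inequalities $(q^k-1)^2<(q^{2k+1}-1)/q$ and $q^{k+1}(q^k-1)<q^{2k+1}-1$ give $U_3^*/(NR)<(q+1)/(q(q-1))+q^{3-k}$. This is at most $7/9<23/27$ for $q=3$, $k\ge5$, and at most $2/3$ for $q\ge4$. So your crude bound on $L$ suffices, with no monotonicity argument or open-ended numerical checking. The paper's cruder bound $7/8$ would not suffice against your $23/27$.
\end{itemize}

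The $h_3$ and $h_2$ comparisons go through as you say. In $h_2$ the Gaussian error term becomes $4q^{k-3}Q$ rather than $4q^{k-5}Q$, and the resulting polynomial $q^{k-2}(q^2-q-4)-q^2-q+2+4/q$ is still positive for $q\ge3$. Your exceptional-case values and the check of \eqref{eq:critical-exception-average} are correct; the left side is about $558512$.
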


\begin{proof}
The Grassmann intersection-number formula gives
\[
 D=\sum_{j=1}^{k-2}
 q^{(k-j)(k-2-j)}
 \qbinom{k}{j}\qbinom{k-1}{k-2-j}.
\]
Keeping the term \(j=1\), we obtain
\[
 L\ge
 Nq^{(k-1)(k-3)}\qbinom{k-1}{2}.
\]
Furthermore,
\[
 \frac{q^{(k-1)(k-3)}\qbinom{k-1}{2}}{R}
 =\prod_{i=3}^{k-1}
 \frac{1-q^{-i}}{1-q^{-(k-1+i)}}
 >\beta_q:=1-\frac1{q^2(q-1)}.
\]
Thus
\begin{equation}\label{eq:critical-L-lower}
 L>\beta_qNR,\qquad \beta_q\ge\frac{17}{18}.
\end{equation}

We use
\[
 N>q^{2k-1}(q+1),\qquad
 R>q^{(k-3)(k+1)},\qquad
 \frac QR<\frac{q^{k+1}}{1-q^{-2}},
\]
together with
\(\qbinom{a}{b}<2q^{b(a-b)}\) for \(q\ge3\).  The latter follows by
factoring out \(q^{b(a-b)}\): the remaining denominator is bounded below
by
\[
 \prod_{i=1}^{\infty}(1-q^{-i})
 >1-\sum_{i=1}^{\infty}q^{-i}\ge\frac12.
\]
For \(U_0\), the two summands satisfy
\[
 \frac{Q}{NR}<\frac1{32},\qquad
 \frac{q(q+1)\qbinom{k-1}{1}p}{N}<\frac34.
\]
The same substitutions give
\begin{align}
 \frac{U_0}{NR}
 &<\frac34+\frac1{32}=\frac{25}{32},\label{eq:critical-U0}\\
 \max_{3\le l\le k-1}\frac{U_l}{NR}
 &<\frac9{64}+\frac3{16}+\frac12
 =\frac{53}{64},\label{eq:critical-Ul}\\
 \frac{h_3(2k+1,k)}{NR}
 &<\frac{27}{64}.\label{eq:critical-h3}
\end{align}
For example, the three summands in \eqref{eq:critical-Ul} follow,
respectively, from
\[
 \frac{q^2}{(q^2-1)^2},\qquad
 \frac{q}{(q-1)^2(q+1)},\qquad
 \frac{2q^{4-k}}{q+1},
\]
each of which is largest at \(q=3,k=4\).  Likewise,
\eqref{eq:critical-h3} follows from
\[
 \frac{\qbinom{3}{1}Q}{NR}
 <\frac{q^{7-k}}{(q^2-1)^2}\le\frac{27}{64}.
\]

For the Family II term, the Gaussian difference estimate gives
\[
 \qbinom{2k}{k-1}
 -q^{(k-1)^2}\qbinom{k+1}{k-1}
 \le\qbinom{k-1}{1}Q.
\]
If \(c\le2k-1\), put \(s=2k-c-1\), so
\(0\le s\le k-3\) and
\(\qbinom{2k+1-c}{2k-c-1}=\qbinom{s+2}{2}\);
for \(c=2k+1\) this term is zero.  The three parts of \(h_2(2k+1,k,c)/(NR)\)
are therefore smaller than
\[
 \frac{27}{64},\qquad
 \frac{2}{q(q+1)}\le\frac16,\qquad
 \frac{q^{-k^2+2k+5}}{(q-1)(q+1)}\le\frac1{216},
\]
respectively.  Hence
\[
 \frac{h_2(2k+1,k,c)}{NR}
 <\frac{1025}{1728}<\frac{17}{18}.
\]

Next,
\[
 \frac{p^k}{NR}
 <\frac{q^4}{(q+1)(q-1)^k}.
\]
This is smaller than \(17/18\) when \(q=3,k\ge5\) or
\(q\ge4,k\ge4\).  For \((q,k)=(3,4)\), direct substitution gives
\[
 \frac{p^k}{NR}=\frac{40^4}{9841\cdot364}<\frac{17}{18}.
\]
Finally,
\[
 \frac{U_3^*}{NR}
 <\frac{q}{(q-1)^2}+\frac{q^{5-k}}{q^2-1}.
\]
For \(q=3,k\ge5\), the right-hand side is at most \(7/8\);
for \(q\ge4,k\ge4\), it is at most \(32/45\).  This proves all
comparisons with \eqref{eq:critical-L-lower}, except
\((q,k)=(3,4)\).

The displayed exceptional values follow by direct evaluation.  In addition,
\[
 pD(N-p)-p(p-1)(U_3^*-1)=81230760>0,
\]
which proves \eqref{eq:critical-exception-average}.
\end{proof}

\begin{lemma}\label{lem:HM-critical-structure}
Let $q\ge3$, \(k\ge4\), and \(n=2k+1\).  Put
\[
 p=\qbinom{k}{1},\qquad
 D=\qbinom{n-2}{k-2}
 -q^{k(k-2)}\qbinom{n-k-2}{k-2}.
\]
If \(\calF\subseteq\vbinom{V}{k}\) is a maximal nontrivial intersecting
family and
\[
 \sigma_{k,q}(\calF)\ge pD,
\]
then \(\calF=\calH_1(X,M)\) for some \(X<M\le V\) with
\((\dim X,\dim M)=(1,k+1)\).
\end{lemma}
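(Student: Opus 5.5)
The plan is to convert the Ore hypothesis into a size bound and then rule out, by cardinality, every structure a maximal nontrivial intersecting family can have at $n=2k+1$, except $\calH_1(X,M)$. Since Theorem~\ref{thm:CLWZ-structure} needs $n\ge2k+3$, I would redo its covering-number case analysis at $n=2k+1$, with Lemma~\ref{lem:HM-critical-comparison} supplying every numerical comparison. I have not checked that the quantities $U_0,U_l,U_3^*$ of that lemma are exactly the case bounds below; that identification is the working assumption of the plan.

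\emph{Step 1.} Lemma~\ref{lem:ore-size} with $\sigma(\calF)\ge pD$ gives
\[
 |\calF|\ge \frac{N}{p^{2}}\,pD=\frac{N}{p}D=L .
\]
The same hypothesis will be reused in its raw form in the exceptional case.

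\emph{Step 2.} Split according to $\tau=\tau(\calF)$, which satisfies $2\le\tau\le k$ because $\calF$ is nontrivial and every member covers $\calF$.
\begin{enumerate}[\rm(a)]
\item If $\tau=k$, the standard branching argument over a covering $k$-space gives $|\calF|\le p^k$.
\item If $3\le\tau=l\le k-1$, fix a covering $l$-space $T$ and count the members through each point of $T$. Take a point of maximal degree and use that some member avoids it to bound all the other degrees. This is the Cao--Lv--Wang--Zhou estimate, and at $n=2k+1$ it should read $|\calF|\le U_l$.
\item If $\tau=2$, choose the covering $2$-spaces and a point $X$ of maximal degree. Let $M$ be the span of the intersections of members through $X$ with the members avoiding $X$, as in \cite{CaoLvWangZhou}. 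Then:
\begin{itemize}
\item[--] if some member avoids $X$ in a degenerate way, the crude bound is $U_0$;
\item[--] if $\dim M=c\ge k+2$, maximality forces $\calF=\calH_2(X,M,C)$ and so $|\calF|=h_2(2k+1,k,c)$;
\item[--] if the covering $2$-spaces all lie in a common $3$-space $Z$, then $\calF\subseteq\calH_3^{(k)}(Z)$, so $|\calF|\le h_3(2k+1,k)$ or the refined bound $U_3^*$ applies;
\item[--] if $\dim M=k+1$, maximality gives $\calF=\calH_2(X,M,C)=\calH_1(X,M)$.
\end{itemize}
\end{enumerate}

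\emph{Step 3.} By \eqref{eq:critical-list}, every bound in Step 2 other than $\calH_1(X,M)$ is strictly less than $L$, which contradicts Step 1. The same holds for $U_3^*$ unless $(q,k)=(3,4)$.

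\emph{Step 4.} In the case $(q,k)=(3,4)$ with $|\calF|\le U_3^*$, Step 1 does not give a contradiction. Instead apply Lemma~\ref{lem:ore-disjoint-average}:
\[
 \sigma(\calF)\le \frac{p(p-1)}{N-p}\,(|\calF|-1)\le \frac{p(p-1)(U_3^*-1)}{N-p}<pD,
\]
where the last inequality is \eqref{eq:critical-exception-average}. This contradicts the hypothesis $\sigma(\calF)\ge pD$, so only $\calF=\calH_1(X,M)$ survives.

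The main obstacle is Step 2 itself: re-deriving the case bounds at $n=2k+1$ and showing that each one is dominated by the corresponding quantity in Lemma~\ref{lem:HM-critical-comparison}. This requires going through the CLWZ proof and checking that each inequality used there remains valid at $n=2k+1$, only losing the asymptotic slack, which the explicit $U$-bounds are designed to absorb. I would start with the $\tau=2$ subcase where $M$ has small dimension, since that is where the $U_3^*$ refinement and the exception come from.
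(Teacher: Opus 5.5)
\textbf{There is a genuine gap.} Your architecture matches the paper's: $|\calF|\ge L$ from Lemma~\ref{lem:ore-size}, then the covering-number case analysis of \cite{CaoLvWangZhou} at $n=2k+1$, then the comparisons of Lemma~\ref{lem:HM-critical-comparison}, and finally Lemma~\ref{lem:ore-disjoint-average} for $(q,k)=(3,4)$. The paper does not re-derive the case bounds. It notes that \cite[Lemmas~3.3--3.8]{CaoLvWangZhou} already hold for $n\ge2k$; only the final theorem there needs $n\ge2k+3$.

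The gap is the identification you flagged as unchecked: it is wrong, and it is where the substance of the proof lies. The quantities $U_l$ ($3\le l\le k-1$) do not bound families with $\tau(\calF)=l$. They are the bounds from \cite[Lemma~3.7(ii)]{CaoLvWangZhou} in the case $\tau(\calF)=2$ where all $2$-dimensional covers share a common point and span an $l$-space; this subcase is missing from your Step~2(c).

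For $\tau(\calF)=m$ with $3\le m\le k-1$, what is actually available is $|\calF|\le\max\{u_1(m),u_2(m)\}$ from \cite[Lemma~3.8(ii),(iii)]{CaoLvWangZhou}, where for instance
\[
 u_1(m)=\Bigl(\qbinom{m-1}{1}p^{m-1}+q^{2m-2}p^{m-2}\Bigr)\qbinom{2k+1-m}{k-m}.
\]
The paper then needs an extra argument to reduce everything to $u_1(3)=((q+1)p^2+q^4p)R=U_3^*$. It shows $u_2(m)<u_1(m)$ by an adjacent-ratio comparison, using $q\ge3$, and that $u_1(m)$ decreases in $m$. Your Step~2(b) asserts $|\calF|\le U_l$ instead. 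That bound is unsupported and substantially stronger than what is known: at $(q,k)=(3,4)$ one has $U_3=1813903<L<U_3^*=3508960$. So your plan would close the $\tau=3$ case with no averaging at all, which shows the claimed estimate is not the real one.

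Conversely, $U_3^*$ has no place in the $\tau=2$ branch. When the $2$-covers have no common point, \cite[Lemmas~3.3 and~3.6]{CaoLvWangZhou} together with maximality give $\calF=\calH_3^{(k)}(Z)$ exactly, so $|\calF|=h_3(2k+1,k)<L$. Hence the exceptional case $(q,k)=(3,4)$ arises in the branch $3\le\tau(\calF)\le k-1$, not from a $3$-space of covers. Your Step~4 inequality chain is correct, but it must be applied there, after the reduction to $U_3^*$.

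To repair the proposal, reassign the bounds as follows:
\begin{itemize}
\item For $\tau=2$: $U_0$ when there is a single $2$-cover; $U_l$ when the covers share a point and span an $l$-space; $\calH_3^{(k)}(Z)$ when they have no common point; otherwise $\calH_2(X,M,C)$ with $\dim C\in\{k+1,\ldots,2k-1,2k+1\}$, where $\dim C=k+1$ is $\calH_1$.
\item For $\tau=k$: $p^k$.
\item For $3\le\tau\le k-1$: $u_1(m)$ and $u_2(m)$, together with the $u_1/u_2$ comparison and the monotonicity in $m$, which you must supply.
\end{itemize}
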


\begin{proof}
Use the notation of Lemma~\ref{lem:HM-critical-comparison}.  By
Lemma~\ref{lem:ore-size},
\[
 |\calF|\ge L.
\]
The covering-number analysis in
\cite[Lemmas~3.3--3.8]{CaoLvWangZhou} is valid under \(2k\le n\);
we specialize it to \(t=1\).

Suppose first that \(\tau(\calF)=2\), and let \(\calT\) be the family of
all \(2\)-dimensional covers.  The alternatives in
\cite[Lemmas~3.3--3.7]{CaoLvWangZhou} are bounded by \(U_0\), by some
\(U_l\) with \(3\le l\le k-1\), by \(h_3(2k+1,k)\), or by
\(h_2(2k+1,k,c)\) with
\[
 c\in\{k+2,\ldots,2k-1,2k+1\},
\]
unless \(\calF=\calH_1(X,M)\).  All the former possibilities contradict
\eqref{eq:critical-list}.

Now let \(\tau(\calF)=m\ge3\).  If \(m=k\),
\cite[Lemma~3.8(i)]{CaoLvWangZhou} gives
\(|\calF|\le p^k<L\).  If \(3\le m<k\), the bounds in
\cite[Lemma~3.8(ii),(iii)]{CaoLvWangZhou} are
\begin{align*}
 u_1(m)
 &=\left(\qbinom{m-1}{1}p^{m-1}
        +q^{2m-2}p^{m-2}\right)
   \qbinom{2k+1-m}{k-m},\\
 u_2(m)
 &=\qbinom{m-1}{1}\qbinom{m}{1}p^{m-2}
   \qbinom{2k+1-m}{k-m}\\
 &\quad
 +q^{m-1}\qbinom{k-m+1}{1}\qbinom{m}{1}p^{m-1}
   \qbinom{2k-m}{k-m-1}.
\end{align*}
The adjacent-ratio calculation gives \(u_2(m)<u_1(m)\).  A sufficient
inequality is
\[
 q^{k+2}>
 \frac{\qbinom{k-m+1}{1}}{\qbinom{k-m}{1}}\,
 \frac{\qbinom{m}{1}}{\qbinom{m-1}{1}}\,p,
\]
which follows from
\[
 \frac{\qbinom{r+1}{1}}{\qbinom{r}{1}}<q+1,\qquad
 p<\frac{q^k}{q-1},\qquad
 q^2>\frac{(q+1)^2}{q-1}\quad(q\ge3).
\]
Also \(u_1(m)\) decreases with \(m\).  For \(3\le m\le k-2\), it is
enough that
\[
 q^{k+1}\qbinom{m-1}{1}>
 \qbinom{m}{1}p+q^{2m}.
\]
Indeed, after division by \(q^{k+m-1}\), the left-hand side is at least
\(1+q^{-1}\), whereas the two terms on the right are smaller than
\(q/(q-1)^2\) and at most \(q^{-1}\), respectively.
Consequently,
\[
 |\calF|\le u_1(3)=U_3^*.
\]
Lemma~\ref{lem:HM-critical-comparison} now gives a contradiction unless
\((q,k)=(3,4)\).  In that exceptional case,
Lemma~\ref{lem:ore-disjoint-average} and
\eqref{eq:critical-exception-average} give
\[
 \sigma_{4,3}(\calF)
 \le\frac{p(p-1)(|\calF|-1)}{N-p}
 \le\frac{p(p-1)(U_3^*-1)}{N-p}<pD,
\]
again a contradiction.  Hence only the asserted \(\calH_1\) case remains.
\end{proof}

\begin{lemma}\label{lem:HM-boundary-comparison}
Let $q$ be a prime power and $k\ge4$.  Put
\[
 p=\qbinom{k}{1},\quad N=\qbinom{2k+2}{1},\quad
 Q=\qbinom{2k}{k-2},\quad R=\qbinom{2k-1}{k-3},
\]
\[
 D=Q-q^{k(k-2)}\qbinom{k}{2},
 \qquad L=\frac{N}{p}D.
\]
For \(3\le l\le k-1\), define
\begin{align*}
 U_0&=Q+q(q+1)\qbinom{k-1}{1}pR,\\
 U_l&=\qbinom{l-1}{1}Q
 +q^{l-1}\qbinom{k-l+1}{1}pR
 +q^k\qbinom{2k+2-l}{k-l+1},\\
 U_3^*&=\bigl((q+1)p^2+q^4p\bigr)R.
\end{align*}
Then
\begin{align}
 \max\bigl\{U_0,\ U_l,\ U_3^*,\ p^k,\ h_3(2k+2,k),\
 h_2(2k+2,k,c)\bigr\}<L                         \label{eq:boundary-list}
\end{align}
for every \(3\le l\le k-1\) and
\(c\in\{k+2,\ldots,2k-1,2k+2\}\).
\end{lemma}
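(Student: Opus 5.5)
The plan is to mirror the proof of Lemma~\ref{lem:HM-critical-comparison}, now at $n=2k+2$. The extra dimension makes $L$ larger by roughly a factor $q^{2}$ relative to the competitors. This should let $U_3^*$ be handled uniformly, with no exceptional pair, and for all prime powers including $q=2$.

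\emph{Lower bound for $L$.} First I produce a lower bound of the form $L>\beta NR$. Since $n-2=2k$ and $n-k-2=k$, the Grassmann intersection-number expansion gives
\[
 D=\sum_{j\ge1}q^{(k-j)(k-2-j)}\qbinom{k}{j}\qbinom{k}{k-2-j}.
\]
Keeping only the term $j=1$,
\[
 L\ge Nq^{(k-1)(k-3)}\qbinom{k}{1}\qbinom{k}{3}\Big/p=Nq^{(k-1)(k-3)}\qbinom{k}{3}.
\]
Comparing $q^{(k-1)(k-3)}\qbinom{k}{3}$ with $R=\qbinom{2k-1}{k-3}$ via the product formula, the ratio is roughly $q^{(k-1)(k-3)+3(k-3)-(k-3)(k+2)}=q^{0}$ times a product of factors of the form $(1-q^{-i})/(1-q^{-(\cdot)})$. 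So I expect $L>\beta_qNR$ with $\beta_q\ge\prod_{i\ge1}(1-q^{-i})$ up to a harmless correction, i.e.\ $\beta_q\ge 1/4$ for $q=2$ and near $1$ for $q\ge3$. To fix constants, I would rather keep the sharper Gaussian-difference lower bound of Lemma~\ref{lem:gaussian-difference} with $(M,t,r)=(2k,k,k-2)$. That gives $D\ge(1-4q^{-4})pQ'$ with $Q'=\qbinom{2k-1}{k-3}=R$, hence $L\ge(1-4q^{-4})NR$. This is usable for $q\ge3$; for $q=2$ the factor $1-4\cdot2^{-4}=3/4$ is still positive.

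\emph{Upper bounds for each competitor.} Next I bound each competitor divided by $NR$, using $N>q^{2k+1}$, $Q/R=\qbinom{2k}{k-2}/\qbinom{2k-1}{k-3}=\qbinom{2k}{1}/\qbinom{k-2}{1}<q^{k+2}\cdot\frac{q}{q-1}$, and $p<q^k/(q-1)$.
\begin{itemize}
\item $U_0$: $Q/(NR)=O(q^{1-k})$, and $q(q+1)\qbinom{k-1}{1}p/N=O(q^{-1})$.
\item $U_l$: the terms are $O(q^{l-k})$, $O(q^{l-3})$ times $q^{-1}$-ish factors, and $q^k\qbinom{2k+2-l}{k-l+1}/(NR)$, which is exponentially small.
\item $U_3^*$: $(q+1)p^2/N=O(q^{-1})$ and $q^4p/N=O(q^{3-k})$. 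These now carry an extra $q^{-1}$ compared with the critical case, so no exception arises.
\item $p^k$: $p^k/(NR)\lesssim q^{k^2}/(q^{2k+1}q^{(k-3)(k+2)}(q-1)^k)=q^{5}/(q-1)^k$. For $q=2$ this is $32$ at $k=4$, which is problematic.
\item $h_3$ and $h_2$: these reduce as before to $\qbinom{k-1}{1}Q+\dots$, i.e.\ ratio $O(q^{1-k}\cdot q^{k}\,q^{\,k+3-2k-1})$, which is small.
\end{itemize}

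\emph{Main obstacle.} The step I expect to be hardest is the small-$q$ regime, especially $q=2$ with $k=4,5$. There the crude bounds $\qbinom{a}{b}<4q^{b(a-b)}$ lose constant factors, and the estimate for $p^k$ above is too lossy. I would handle this by replacing asymptotic bounds with exact product-formula ratios where they are tight. For $p^k$ I would instead compare with the true size of $\qbinom{2k-1}{k-3}$ and the true $N$: $p^k\approx q^{k^2}\prod(1-q^{-1})^{-k}$ while $NR\approx q^{2k+2+(k-3)(k+2)}$, and the exponent gap is $k^2-(k^2+k-4)=4-k$. So the ratio behaves like $q^{4-k}(1-q^{-1})^{-k}/\prod$, which is $O(1)$ and must be checked numerically for $q=2,k\in\{4,\dots,7\}$ and then shown decreasing in $k$. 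For the finitely many remaining small cases, with $q\in\{2,3\}$ and $k\le 6$ say, I would verify \eqref{eq:boundary-list} by direct evaluation, as was done for $(3,4)$ in Lemma~\ref{lem:HM-critical-comparison}. For $k$ beyond that range, a monotonicity argument in $k$ on each ratio finishes the proof.
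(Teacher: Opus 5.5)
Your overall plan (a lower bound $L>\beta NR$, then each competitor divided by $NR$) is the paper's. There is a genuine gap, however: the constant you settle on is too weak at $q=2$, and this breaks the $U_3^*$ comparison for every $k$, not just for finitely many.

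From Lemma~\ref{lem:gaussian-difference} you get $L\ge(1-4q^{-4})NR$, which at $q=2$ is only $L\ge\frac34NR$; your first guess $\beta_2\ge\frac14$ is worse. But with $p=2^k-1$ and $N=4^{k+1}-1$,
\[
 \frac{U_3^*}{R}=3(2^k-1)^2+16(2^k-1)=3\cdot4^k+10\cdot2^k-13>\frac34\bigl(4^{k+1}-1\bigr)=\frac34N
\]
for all $k$. Thus $U_3^*/(NR)$ tends to $\frac34$ from above. No finite direct evaluation combined with monotonicity in $k$ can close this. Your claim that $(q+1)p^2/N=O(q^{-1})$ makes $U_3^*$ harmless overlooks that this term is essentially $\frac34$ at $q=2$. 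Similarly, $U_0/(NR)$ increases to $\frac34$ from below at $q=2$. With your constant that comparison therefore holds only through exact lower-order terms, and any monotonicity argument runs the wrong way. For $q\ge3$ your constant is adequate.

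The missing step is the exact evaluation of the $j=1$ term you had already isolated. The product formula gives
\[
 \frac{q^{(k-1)(k-3)}\qbinom{k}{3}}{R}=\prod_{i=4}^{k}\frac{1-q^{-i}}{1-q^{-(k-1+i)}}>1-\sum_{i\ge4}q^{-i}=1-\frac{1}{q^3(q-1)},
\]
with factors starting at $i=4$, not $i=1$. Hence $L>\frac78NR$ at $q=2$. This handles $U_3^*$ for $q=2$ and $k\ge5$.

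Even this sharper constant fails at $(q,k)=(2,4)$, where $\frac78NR\approx113681<U_3^*=116205$. So, contrary to your expectation, there is an exceptional pair. It is settled only by the exact values $U_3^*=116205<L=125147$.

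A minor point: your $p^k$ estimate should read $q^{5-k}/(q-1)^k$, not $q^5/(q-1)^k$. With the corrected estimate, $(2,4)$ and $(2,5)$ need exact checks: $15^4<125147$ and $31^5<167587875$.
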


\begin{proof}
The standard intersection-number formula in the Grassmannian gives
\[
 D=\sum_{j=1}^{k-2}
 q^{(k-j)(k-2-j)}
 \qbinom{k}{j}\qbinom{k}{k-2-j}.
\]
Keeping the term \(j=1\), we obtain
\[
 L\ge Nq^{(k-1)(k-3)}\qbinom{k}{3}.
\]
Moreover,
\[
 \frac{q^{(k-1)(k-3)}\qbinom{k}{3}}{R}
 =\prod_{i=4}^{k}
 \frac{1-q^{-i}}{1-q^{-(k-1+i)}}
 >\alpha_q:=1-\frac{1}{q^3(q-1)}.
\]
Consequently,
\begin{equation}\label{eq:boundary-L-lower}
 L>\alpha_qNR.
\end{equation}

We record the finite comparisons with the right-hand side of
\eqref{eq:boundary-L-lower}.  They follow by substituting the product
formula for the Gaussian coefficients and using the adjacent-ratio identity
\[
 \frac{Q}{R}=\frac{q^{2k}-1}{q^{k-2}-1}.
\]
When \(q=2\), this ratio is smaller than
\(\frac{16}{3}2^k\).  We also use
\[
 C_2:=\prod_{i=1}^{\infty}(1-2^{-i})^{-1}<\frac72.
\]
Indeed, the product of the first four factors before inversion is
\(315/1024\), and the remaining product is at least \(15/16\), so the
full product is larger than \(2/7\).  Direct simplification gives
\begin{align*}
 \frac{U_0}{R}
 &<3\cdot2^{2k}+\frac{16}{3}2^k,\\
 \max_{3\le l\le k-1}\frac{U_l}{R}
 &<\frac{10}{3}2^{2k},\\
 \frac{h_3(2k+2,k)}{R}
 &<\frac{112}{3}2^k<\frac{21}{8}2^{2k},\\
 \max_c\frac{h_2(2k+2,k,c)}{R}
 &<\frac{77}{24}2^{2k}+16.
\end{align*}
For the last line, the first two terms in the formula for \(h_2(2k+2,k,c)\) are at
most \(\qbinom{k-1}{1}Q<\frac73\,2^{2k}R\).  If
\(c\le2k-1\), write \(s=2k-c-1\), so \(0\le s\le k-3\); the third
term divided by \(R\) is then smaller than
\[
 C_2\,2^{(k-1)^2+3s-(k-3)(k+2)}
 \le\frac78\,2^{2k}.
\]
When \(c=2k+2\), that term is zero by convention.  In every case, the last
term divided by \(R\) is smaller than \(16\).
Each last expression is smaller than
\(\frac78(2^{2k+2}-1)=\alpha_2N\).  Also
\[
 \frac{U_3^*}{R}
 =3(2^k-1)^2+16(2^k-1)<\alpha_2N
 \qquad(k\ge5).
\]
For the one omitted value \((q,k)=(2,4)\), exact substitution gives
\[
 p=15,\quad R=127,\quad D=1835,\quad
 L=125147,\quad U_3^*=116205<L.
\]

For \(q\ge3\), we use
\[
 \frac{Q}{R}<\frac{q^{k+2}}{1-q^{-2}}
 \qquad\text{and}\qquad \alpha_q\ge\frac{53}{54}.
\]
The same substitutions yield
\begin{align*}
 \frac{U_0}{R}
 &<\frac38q^{2k+1},&
 \max_{3\le l\le k-1}\frac{U_l}{R}
 &<\frac{15}{8}q^{2k},\\
 \frac{h_3(2k+2,k)}{R}
 &<\frac9{16}q^{2k+1},&
 \max_c\frac{h_2(2k+2,k,c)}{R}
 &<\frac23q^{2k+1},\\
 \frac{U_3^*}{R}
 &<\frac{2q}{(q-1)^2}q^{2k}.&&
\end{align*}
For completeness, the \(h_2(2k+2,k,c)\) estimate follows from
\[
 \frac{\qbinom{k-1}{1}Q}{R}
 <\frac{q^{2k+1}}{(q-1)(1-q^{-2})}.
\]
For \(c\le2k-1\), the remaining Gaussian term, with
\(s=2k-c-1\in\{0,\ldots,k-3\}\), is smaller than
\[
 2q^{(k-1)^2+3s-(k-3)(k+2)}
 \le2q^{2k-2},
\]
because
\(\prod_{i\ge1}(1-q^{-i})^{-1}<2\) for \(q\ge3\); when
\(c=2k+2\), this term is zero.  The final term contributes less than
\(q^{2k+1}/((q-1)q^5)\).
At \(q=3\), the sum of the resulting three coefficients is already
smaller than \(2/3\), and it decreases with \(q\).
All five expressions are smaller than
\(\alpha_q q^{2k+1}<\alpha_qN\).

It remains only to include \(p^k\).  Since
\[
 R>q^{(k-3)(k+2)},\qquad N>q^{2k+1},\qquad
 p<\frac{q^k}{q-1},
\]
we have
\[
 \frac{p^k}{NR}<\frac{q^{5-k}}{(q-1)^k}.
\]
This is smaller than \(\alpha_q\) for \(q\ge3\), and also for
\(q=2,\ k\ge6\).  The remaining exact comparisons are
\[
 15^4<125147,\qquad
 31^5<167587875=L\big|_{(q,k)=(2,5)}.
\]
Together with \eqref{eq:boundary-L-lower}, these inequalities prove
\eqref{eq:boundary-list}.
\end{proof}

\begin{lemma}\label{lem:HM-boundary-structure}
Let $q$ be a prime power, $k\ge4$ and \(n=2k+2\).  With \(L\) as in
Lemma~\ref{lem:HM-boundary-comparison}, let
$\calF\subseteq\vbinom{V}{k}$ be a maximal nontrivial intersecting family.
If \(|\calF|\ge L\), then
\[
 \calF=\calH_1(X,M)
\]
for some \(X<M\le V\) with \((\dim X,\dim M)=(1,k+1)\).
\end{lemma}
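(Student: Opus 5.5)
The plan is to mirror the proof of Lemma~\ref{lem:HM-critical-structure}, now at $n=2k+2$. The numerical input is Lemma~\ref{lem:HM-boundary-comparison}, which, unlike the critical case, also covers $U_3^*$ and all $q\ge2$. We will not need the disjoint-subspace averaging of Lemma~\ref{lem:ore-disjoint-average}. Since $n=2k+2\ge2k$, the covering-number analysis of \cite[Lemmas~3.3--3.8]{CaoLvWangZhou} (specialized to $t=1$) applies to the maximal nontrivial intersecting family $\calF$. We split according to $\tau(\calF)\in\{2,\dots,k\}$, and in each case compare the resulting size bound with $L$.

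\textbf{Case $\tau(\calF)=2$.} Let $\calT$ be the family of $2$-dimensional covers. The alternatives in \cite[Lemmas~3.3--3.7]{CaoLvWangZhou} are:
\begin{enumerate}[\rm(a)]
\item $|\calF|\le U_0$;
\item $|\calF|\le U_l$ for some $3\le l\le k-1$;
\item $|\calF|\le h_3(2k+2,k)$;
\item $\calF=\calH_2(X,M,C)$ with $c\in\{k+2,\dots,2k-1,2k+2\}$, so that $|\calF|=h_2(2k+2,k,c)$;
\item $\calF=\calH_2(X,M,C)$ with $c=k+1$, that is, $\calF=\calH_1(X,M)$.
\end{enumerate}
Here $U_0$ and $U_l$ are the quantities defined in Lemma~\ref{lem:HM-boundary-comparison} with $n=2k+2$ substituted. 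By \eqref{eq:boundary-list}, each of (a)--(d) is strictly less than $L$, contradicting $|\calF|\ge L$. Only (e) survives.

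\textbf{Case $\tau(\calF)=m\ge3$.} If $m=k$, then \cite[Lemma~3.8(i)]{CaoLvWangZhou} gives $|\calF|\le p^k<L$. If $3\le m\le k-1$, the bounds of \cite[Lemma~3.8(ii),(iii)]{CaoLvWangZhou} at $n=2k+2$ read
\[
 u_1(m)=\Bigl(\qbinom{m-1}{1}p^{m-1}+q^{2m-2}p^{m-2}\Bigr)\qbinom{2k+2-m}{k-m},
\]
and correspondingly for $u_2(m)$. We will show two things, as in Lemma~\ref{lem:HM-critical-structure}: first $u_2(m)\le u_1(m)$, and second that $u_1(m)$ is nonincreasing in $m$. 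These together give $|\calF|\le u_1(3)$.

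The main obstacle is in this step. First, the quantity $U_3^*$ in Lemma~\ref{lem:HM-boundary-comparison} uses $R=\qbinom{2k-1}{k-3}$, which matches $\qbinom{2k+2-3}{k-3}$. So indeed $u_1(3)=U_3^*$, and then $|\calF|\le U_3^*<L$.

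Second, the monotonicity argument in Lemma~\ref{lem:HM-critical-structure} assumed $q\ge3$, and here $q=2$ is allowed. The ratio $u_1(m)/u_1(m+1)$ now carries an extra factor
\[
 \frac{\qbinom{2k+2-m}{k-m}}{\qbinom{2k+1-m}{k-m-1}}=\frac{q^{2k+2-m}-1}{q^{k-m}-1},
\]
which is roughly $q^{k+2}$, one power of $q$ larger than in the critical case. We expect this gain to absorb the weaker inequalities available at $q=2$. Concretely, we will verify
\[
 q^{k+2}\qbinom{m-1}{1}>\qbinom{m}{1}p+q^{2m}\qquad(3\le m\le k-2),
\]
and for the comparison $u_2<u_1$ we will verify
\[
 q^{k+3}>(q+1)^2p \quad\text{using } p<\frac{q^k}{q-1}.
\]
Both hold for $q=2$ as well, since $8>9/1$ fails but $q^{k+3}$ against $9\cdot2^k$ at $q=2$ holds because $2^3\cdot2^k\cdot\ldots$ must be rechecked. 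If $q=2$ is tight for small $k$, we will fall back on exact substitution for $k=4,5$, as the proof of Lemma~\ref{lem:HM-boundary-comparison} does.

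This leaves only alternative (e), so $\calF=\calH_1(X,M)$ for some $X<M\le V$ with $(\dim X,\dim M)=(1,k+1)$.
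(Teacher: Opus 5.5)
Your overall route is the paper's: the same split on $\tau(\calF)$, the same lemmas from \cite[Lemmas~3.3--3.8]{CaoLvWangZhou} at $t=1$, the identification $u_1(3)=U_3^*$, and Lemma~\ref{lem:HM-boundary-comparison} for the final comparisons. Your monotonicity inequality $q^{k+2}\qbinom{m-1}{1}>\qbinom{m}{1}p+q^{2m}$ is also the paper's, and it does hold at $q=2$. After dividing by $q^{k+m-1}$, the left side is at least $q+1$ and the right side is below $q/(q-1)^2+q^{-1}$.

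\textbf{The gap.} The step $u_2(m)<u_1(m)$ is not established for $q=2$, which is exactly the case this lemma adds over Lemma~\ref{lem:HM-critical-structure}. Your proposed sufficient inequality $q^{k+3}>(q+1)^2p$ reads $2^{k+3}>9(2^k-1)$ at $q=2$, i.e.\ $2^k<9$. So it fails for every $k\ge4$. This is not a small-$k$ effect, and checking $k=4,5$ by hand cannot close it. As written, the alternative in \cite[Lemma~3.8]{CaoLvWangZhou} bounded by $u_2(m)$ is not excluded when $q=2$.

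\textbf{The fix.} Keep the two adjacent ratios instead of bounding each by $q+1$. Make three moves:
\begin{enumerate}[\rm(a)]
\item use $p-\qbinom{m}{1}=q^m\qbinom{k-m}{1}$;
\item drop the positive term $q^{2m-2}p^{m-2}\qbinom{2k+2-m}{k-m}$ from $u_1(m)$;
\item use $\qbinom{2k+2-m}{k-m}=\frac{q^{2k+2-m}-1}{q^{k-m}-1}\qbinom{2k+1-m}{k-m-1}>q^{k+2}\qbinom{2k+1-m}{k-m-1}$.
\end{enumerate}
Then $u_2(m)<u_1(m)$ as soon as
\[
 q^{k+3}>\frac{\qbinom{k-m+1}{1}}{\qbinom{k-m}{1}}\,\frac{\qbinom{m}{1}}{\qbinom{m-1}{1}}\,p .
\]
The ratio $\qbinom{r+1}{1}/\qbinom{r}{1}$ decreases in $r$, and here $k-m\ge1$ and $m\ge3$. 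So at $q=2$ the two ratios are at most $3$ and $\qbinom{3}{1}/\qbinom{2}{1}=7/3$. Their product is at most $7<8=q^3$, and $p<2^k$ finishes the inequality. With this inserted, and your bound $q^3>(q+1)^2/(q-1)$ kept for $q\ge3$, your argument coincides with the paper's proof.
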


\begin{proof}
The covering-number analysis in
\cite[Lemmas~3.3--3.8]{CaoLvWangZhou} is valid under \(2k\le n\), even
though the final large-family theorem in that paper is stated in a smaller
range.  We specialize those lemmas to \(t=1\).

First suppose \(\tau(\calF)=2\), and let \(\calT\) be the family of all
\(2\)-dimensional covers.  If \(|\calT|=1\),
\cite[Lemma~3.7(i)]{CaoLvWangZhou} gives \(|\calF|\le U_0\).  If its
members have a common point and span an \(l\)-subspace with
\(3\le l\le k-1\), \cite[Lemma~3.7(ii)]{CaoLvWangZhou} gives
\(|\calF|\le U_l\).  If they have no common point,
\cite[Lemmas~3.3 and 3.6]{CaoLvWangZhou} gives
\(\calF=\calH_3^{(k)}(Z)\).  The remaining common-point cases in
\cite[Lemmas~3.4 and 3.5]{CaoLvWangZhou} give either
\(\calF=\calH_1(X,M)\), or
\(\calF=\calH_2(X,M,C)\) with
\[
 \dim C\in\{k+2,\ldots,2k-1,2k+2\}.
\]
Lemma~\ref{lem:HM-boundary-comparison} excludes every one of these
possibilities except \(\calH_1(X,M)\).

Now let \(\tau(\calF)=m\ge3\).  If \(m=k\),
\cite[Lemma~3.8(i)]{CaoLvWangZhou} gives \(|\calF|\le p^k<L\).
If \(3\le m<k\), the two bounds in
\cite[Lemma~3.8(ii),(iii)]{CaoLvWangZhou} are
\begin{align*}
 u_1(m)
 &=\left(\qbinom{m-1}{1}p^{m-1}
        +q^{2m-2}p^{m-2}\right)
   \qbinom{2k+2-m}{k-m},\\
 u_2(m)
 &=\qbinom{m-1}{1}\qbinom{m}{1}p^{m-2}
   \qbinom{2k+2-m}{k-m}\\
 &\quad
 +q^{m-1}\qbinom{k-m+1}{1}\qbinom{m}{1}p^{m-1}
   \qbinom{2k+1-m}{k-m-1}.
\end{align*}
The adjacent-ratio formula shows \(u_2(m)<u_1(m)\).  Indeed, after
normalizing their difference, it is enough that
\[
 q^{k+3}>
 \frac{\qbinom{k-m+1}{1}}{\qbinom{k-m}{1}}\,
 \frac{\qbinom{m}{1}}{\qbinom{m-1}{1}}\,p,
\]
which follows from
\[
 \frac{\qbinom{r+1}{1}}{\qbinom{r}{1}}<q+1
 \quad\text{and}\quad
 q^3>\frac{(q+1)^2}{q-1}\quad(q\ge3).
\]
For \(q=2\), the two adjacent ratios are at most \(3\) and \(7/3\),
respectively, so their product is smaller than \(8=q^3\).
Moreover \(u_1(m)\) decreases with \(m\).  For \(3\le m\le k-2\), a
sufficient form of the adjacent comparison is
\[
 q^{k+2}\qbinom{m-1}{1}>
 \qbinom{m}{1}p+q^{2m}.
\]
After division by \(q^{k+m-1}\), the left-hand side is
\(q^{3-m}\qbinom{m-1}{1}\ge q+1\), whereas the two terms on the
right are smaller than \(q/(q-1)^2\) and at most \(q^{-1}\),
respectively.  Since
\[
 q+1>\frac{q}{(q-1)^2}+q^{-1}
 \qquad(q\ge2),
\]
the required comparison follows.  Therefore
\[
 |\calF|\le u_1(3)
 =\bigl((q+1)p^2+q^4p\bigr)R=U_3^*<L,
\]
again a contradiction.  Hence only the asserted \(\calH_1(X,M)\) case remains.
\end{proof}

Now we are going to prove Theorem~\ref{thm:ore-hm-sharp}.

\begin{proof}[Proof of Theorem~\ref{thm:ore-hm-sharp}]
Let $\calF$ be nontrivial and intersecting, and extend it to a maximal
intersecting family $\calF^*$.  The extension remains nontrivial.  By
Observation~\ref{obs:monotone},
\[
 \sigma(\calF)\le\sigma(\calF^*).
\]
Put
\[
 p=\qbinom{k}{1},\qquad
 Q=\qbinom{n-2}{k-2},\qquad
 D=Q-q^{k(k-2)}\qbinom{n-k-2}{k-2},
\qquad
 L=\frac{\qbinom{n}{1}}{p}D.
\]

We first record the Ore-structural consequence
\begin{equation}\label{eq:HM-envelope}
 \sigma_{k,q}(\calF^*)\ge pD
 \quad\Longrightarrow\quad
 \calF^*=\calH_1(X,M)
 \ \text{or}\
 \bigl(k=3\ \text{and}\ \calF^*=\calH_3(Z)\bigr).
\end{equation}
For $k=3$, Proposition~\ref{prop:h3-degree} gives $D=p$, so
$L=\qbinom{n}{1}=:N$.  Lemma~\ref{lem:ore-size} gives
$|\calF^*|\ge N$.  De Boeck classified the large maximal intersecting
families of $3$-subspaces; see \cite{DeBoeck} and the eleven-family list in
\cite{IhringerKupavskii}.  The full list in the notation used
here is recorded in Appendix~\ref{app:k3-classification}.  Since
\[
 \qbinom{7}{1}-
 (3q^4+3q^3+2q^2+q+1)
 =q^2(q^4+q^3-2q^2-2q-1)>0,
\]
the size hypothesis places \(\calF^*\) in that list whenever \(n\ge7\).
Type \(1\) is a point-star and is impossible because \(\calF^*\) is
nontrivial, while Types \(2\) and \(3\) are \(\calH_1\) and
\(\calH_3\).  For \(n\ge8\), every remaining type has size smaller than
\(N\): Types \(5\)--\(9\) and \(11\) are already smaller than
\(\qbinom{7}{1}\), and at \(n=8\) the two remaining comparisons are
\[
 q^2(q^5-q^3-3q^2-2q-1)>0,\qquad
 q^2(q^5-q^2-q-1)>0
\]
for Types \(4\) and \(10\), respectively; each difference increases with
\(n\).

It remains in this argument to consider \(n=7\), which is part of the
theorem only when \(q\ge3\).  Type \(4\) is excluded by
\[
 \qbinom{7}{1}-|\mathcal F_{\rm type\,4}|
 =q^2(q^4-3q^2-2q-1)>0,
\]
and Types \(5\)--\(9\) and \(11\) are again smaller than \(N\).
For Type \(10\), there is a fixed \(5\)-space \(Y\) and
\(\calF^*=\vbinom{Y}{3}\).  Every point of \(Y\) has degree
\(\qbinom{4}{2}\), every point outside \(Y\) has degree zero, and a
\(3\)-space meeting \(Y\) in exactly one point exists.  Hence
\[
 \sigma_{3,q}\!\left(\vbinom{Y}{3}\right)=\qbinom{4}{2}
 <p^2,
 \qquad
 p^2-\qbinom{4}{2}=qp>0.
\]
Thus Type \(10\) is incompatible with the hypothesis in
\eqref{eq:HM-envelope}.  This proves \eqref{eq:HM-envelope} for \(k=3\).

Let $k\ge4$.  If $n=2k+2$, the implication is
Lemma~\ref{lem:HM-boundary-structure}, after applying
Lemma~\ref{lem:ore-size} to obtain \(|\calF^*|\ge L\).
If $n\ge2k+3$, the same size implication together with
Lemma~\ref{lem:HM-separation} shows that $L$ is strictly larger than the
structural threshold and than every alternative in
Theorem~\ref{thm:CLWZ-structure} except $\calH_1$.  Finally, if
\(n=2k+1\), the hypotheses of the theorem give \(q\ge3\), and
Lemma~\ref{lem:HM-critical-structure} applies directly.  This proves
\eqref{eq:HM-envelope} in all permitted cases.

Suppose for a contradiction that $\sigma(\calF)>pD$.  By
Observation~\ref{obs:monotone},
\(\sigma(\calF^*)\ge\sigma(\calF)>pD\), so
\eqref{eq:HM-envelope} applies.  Propositions
\ref{prop:hm-degree} and \ref{prop:h3-degree}, together with
Observation~\ref{obs:monotone}, give
\[
 \sigma(\calF)\le\sigma(\calF^*)=pD,
\]
a contradiction.  This proves the upper bound, and the same two
propositions prove sharpness.

It remains to classify equality.  Suppose that $\sigma(\calF)=pD$.
Observation~\ref{obs:monotone} gives
\(\sigma(\calF^*)\ge pD\), so \eqref{eq:HM-envelope} shows that either
\[
 \calF^*=\calH_1(X,M)
\]
for a \(X<M\le V\) with $(\dim X,\dim M)=(1,k+1)$, or $k=3$ and
$\calF^*=\calH_3(Z)$.

Suppose first that $\calF\subseteq\calH_1(X,M)$.  If
$S\in\vbinom{V}{k}$ is disjoint from $M$, then $S\notin\calF$, and
Proposition~\ref{prop:hm-degree} gives
\[
 pD=\sigma(\calF)\le d_{\calF}(S)
 \le d_{\calH_1(X,M)}(S)=pD.
\]
Every point of $S$ therefore has $\calF$-degree $D$, since its degree is at
most its degree $D$ in $\calH_1(X,M)$.  Moreover, every point
$y\not\le M$ lies in such a subspace $S$: choose a $k$-subspace of $V/M$
containing the image of $y$ and lift a basis so that the lift contains $y$.
Thus
\begin{equation}\label{eq:HM-equality-outside-degree}
 d_{\calF}(y)=D\qquad(y\not\le M).
\end{equation}
If a member $E$ of $\calH_1(X,M)\setminus\calF$ is not contained in
$M$, any point $y\le E$ outside $M$ would have
$d_{\calF}(y)<d_{\calH_1(X,M)}(y)=D$, contradicting
\eqref{eq:HM-equality-outside-degree}.  Hence
\[
 \calF=\calH_1^\circ(X,M)\cup\calG
\]
for some $\calG\subseteq\vbinom{M}{k}$.

The intersection of all members of $\calH_1^\circ(X,M)$ is exactly $X$.
To see this, for any point $z\ne X$ choose a $2$-subspace
$L$ with $X<L\le M$ and $z\nleq L$ when $z\le M$, and then extend $L$ to
a $k$-subspace $E$ satisfying $E\cap M=L$ and $z\nleq E$.  Such an
extension exists because $\dim(V/M)=n-k-1\ge k$: if $z\not\le M$, choose
the $(k-2)$-dimensional image of $E$ in $V/M$ to avoid the image of $z$.
It follows that the
displayed family is nontrivial if and only if $\calG$ contains a member
which does not contain $X$.  This proves the necessity of (i).

Conversely, let $\calF=\calH_1^\circ(X,M)\cup\calG$, where $\calG$ has
the stated property.  Then $\calF$ is a nontrivial intersecting family.
Every point outside $M$ has degree exactly $D$.  If $z\le M$ and
$z\ne X$, then
\[
 d_{\calF}(z)\ge
 d_{\calH_1^\circ(X,M)}(z)
 =Q-\qbinom{k-1}{1}.
\]
The same lower bound holds for $d_{\calF}(X)$, by fixing one such point
$z$ and counting the outer members through $X+z$.  Moreover,
\[
 q^{k(k-2)}\qbinom{n-k-2}{k-2}
 \ge q^{k(k-2)}>\qbinom{k-1}{1},
\]
because \(\qbinom{k-1}{1}<q^{k-1}\) and \(k(k-2)\ge k-1\).
Hence every point of $V$ has $\calF$-degree at least $D$.
Therefore every
nonmember has degree sum at least $pD$, while a $k$-subspace disjoint from
$M$ is a nonmember whose points all have degree $D$.  Thus
$\sigma(\calF)=pD$, proving the sufficiency of (i).

It remains to consider $\calF\subseteq\calH_3(Z)$, where necessarily
$k=3$ and $p=\qbinom{3}{1}$.  For every $3$-subspace $S$ disjoint from
$Z$,
\[
 p^2=\sigma(\calF)\le d_{\calF}(S)
 \le d_{\calH_3(Z)}(S)=p^2.
\]
As above, every point outside $Z$ has $\calF$-degree $p$, and every such
point lies in a $3$-subspace disjoint from $Z$.  Hence no deleted member of
$\calH_3(Z)$ can contain a point outside $Z$.  The only member contained in
$Z$ is $Z$ itself, so
\[
 \calF=\calH_3(Z)\qquad\hbox{or}\qquad
 \calF=\calH_3(Z)\setminus\{Z\}.
\]

Conversely, deleting $Z$ does not change the degree $p$ of any point
outside $Z$.  For a point $z\le Z$, its degree in
$\calH_3(Z)\setminus\{Z\}$ is
\[
 (q+1)\left(\qbinom{n-2}{1}-1\right)\ge p:
\]
choose the $q+1$ planes of $Z$ through $z$, and then a $3$-subspace
through the chosen plane other than $Z$.  Thus both displayed families are
nontrivial: for each point one can choose a plane of $Z$ and an outer
extension of that plane which avoids the point.  All their point degrees are
at least $p$, and a $3$-subspace disjoint from $Z$ witnesses Ore-degree
$p^2$.  This proves (ii) and completes the equality classification.
\end{proof}

\section{Direct-sum matchings and the Ramsey corollary}\label{sec:matching}

We compare the Ore lower bound forced by the construction
$\calB_q(V,k;W)$ with the stability extremal number in
Theorem~\ref{thm:linear-stability}.

\begin{lemma}\label{lem:matching-separation}
Let $q$ be a prime power, $k\ge2$, $m\ge2$, and
$n\ge(2m+1)k-m+3$.  Then
\begin{equation}\label{eq:L-greater-ell}
 \frac{\qbinom{n}{1}}{\qbinom{k}{1}}D_q(n,k,m)
 >\ell_q(n,k,m).
\end{equation}
\end{lemma}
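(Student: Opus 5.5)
The plan is to compare both sides with the common scale $\qbinom{n-1}{k-1}$, bounding the left side from below and $\ell_q(n,k,m)$ from above by a constant close to $m$ times this scale.

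\emph{Left-hand side.} By Lemma~\ref{lem:gaussian-difference} with $(M,t,r)=(n-1,m,k-1)$,
\[
 D_q(n,k,m)\ge\bigl(1-4q^{-(n-m-k+2)}\bigr)\qbinom{m}{1}\qbinom{n-2}{k-2}.
\]
Using $\qbinom{n-2}{k-2}=\frac{\qbinom{k-1}{1}}{\qbinom{n-1}{1}}\qbinom{n-1}{k-1}$, the left side of \eqref{eq:L-greater-ell} is at least
\[
 \bigl(1-4q^{-(n-m-k+2)}\bigr)\qbinom{m}{1}
 \frac{\qbinom{n}{1}\qbinom{k-1}{1}}{\qbinom{k}{1}\qbinom{n-1}{1}}
 \qbinom{n-1}{k-1}.
\]
Here $\qbinom{n}{1}/\qbinom{n-1}{1}>q$ and $q\qbinom{k-1}{1}/\qbinom{k}{1}=1-1/\qbinom{k}{1}\ge 1-1/(q+1)$. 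So the left side is roughly $\qbinom{m}{1}\qbinom{n-1}{k-1}$ up to a factor bounded below by about $\tfrac{q}{q+1}$. This loses too much when $m$ is small, so I will keep the exact factor $\qbinom{n}{1}\qbinom{k-1}{1}/(\qbinom{k}{1}\qbinom{n-1}{1})$ and compare it carefully later.

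\emph{Right-hand side.} I will unroll \eqref{eq:ell-recurrence}:
\[
 \ell_q(n,k,m)=\sum_{j=0}^{m-2}q^{jk}\qbinom{n-j-1}{k-1}+q^{(m-1)k}\ell_q(n-m+1,k,1).
\]
Then I bound $\ell_q(n',k,1)$ using \eqref{eq:linear-HM-base}: it equals the Hilton--Milner size $\qbinom{n'-1}{k-1}-q^{k(k-1)}\qbinom{n'-k-1}{k-1}+q^k$. By Lemma~\ref{lem:gaussian-difference} this is at most $\qbinom{k}{1}\qbinom{n'-2}{k-2}+q^k$, which is tiny relative to $\qbinom{n'-1}{k-1}$ since $n'=n-m+1\ge 2mk+4$ is large.

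\emph{Comparing the two sides.} The sum $\sum_{j=0}^{m-2}q^{jk}\qbinom{n-j-1}{k-1}$ is exactly $\qbinom{n}{k}-q^{(m-1)k}\qbinom{n-m+1}{k}$, by \eqref{eq:telescoping-gaussian}. The natural comparison is then with $|\calB_q(V,k;W)|$ for $\dim W=m$, which is the same telescoping sum extended to $j=m-1$; the extra term $q^{(m-1)k}\qbinom{n-m}{k-1}$ dominates the HM remainder $q^{(m-1)k}\bigl(\qbinom{k}{1}\qbinom{n-m-1}{k-2}+q^k\bigr)$ by a factor of about $q^{n-m-2k}$. Hence $\ell_q(n,k,m)<\qbinom{n}{k}-q^{mk}\qbinom{n-m}{k}$ with a gap of order $q^{(m-1)k}\qbinom{n-m}{k-1}$.

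It then remains to show that $\frac{\qbinom{n}{1}}{\qbinom{k}{1}}D_q(n,k,m)$ is at least $|\calB_q|$ minus a much smaller error. I compute this exactly as
\[
 \frac{\qbinom{n}{1}}{\qbinom{k}{1}}D_q(n,k,m)-|\calB_q|
 =\sum_{j=0}^{m-1}q^{j(k-1)}\qbinom{n-j-1}{k-1}\Bigl(\frac{\qbinom{n}{1}\qbinom{k-1}{1}}{\qbinom{k}{1}\qbinom{n-j-1}{1}}\cdot\frac{\qbinom{n-j-1}{1}}{\qbinom{n-j-1}{1}}\cdots\Bigr),
\]
that is, I compare termwise $\frac{\qbinom{n}{1}}{\qbinom{k}{1}}q^{j(k-1)}\qbinom{n-j-2}{k-2}$ against $q^{jk}\qbinom{n-j-1}{k-1}=q^{jk}\frac{\qbinom{n-j-1}{1}}{\qbinom{k-1}{1}}\qbinom{n-j-2}{k-2}$. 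Their ratio is
\[
 \frac{\qbinom{n}{1}\qbinom{k-1}{1}}{q^j\qbinom{k}{1}\qbinom{n-j-1}{1}}\approx\frac{q^{j+1}(q^{k-1}-1)}{q^j(q^k-1)},
\]
which is slightly below $1$. So each term loses a relative amount of about $q^{-k}$, and the $j$-th term deficit is about $q^{jk-k}\qbinom{n-j-1}{k-1}$.

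\emph{Main obstacle.} The $j=m-1$ deficit $q^{(m-2)k}\qbinom{n-m}{k-1}$ must be beaten by the gap $q^{(m-1)k}\qbinom{n-m}{k-1}$ found above, and this holds with room $q^k$. The hard step is making these termwise deficits and gaps precise for all $q\ge2$ and $k\ge2$ with explicit constants. I would do this by writing all terms in units of $q^{(m-1)k}\qbinom{n-m}{k-1}$ and using $n\ge(2m+1)k-m+3$ to make every error at most $q^{-3}$. I expect the $q=2$, $k=2$ regime to be the tightest, and I would check it separately if the general estimate fails.
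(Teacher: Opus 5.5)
\textbf{There is a genuine gap at the step you call the main obstacle.} The overall bookkeeping, writing both sides relative to $|\calB_q(V,k;W)|$ with $\dim W=m$, is viable. Your estimate of the gap $|\calB_q|-\ell_q(n,k,m)\approx q^{(m-1)k}\qbinom{n-m}{k-1}$ is also correct. But the deficit is mis-estimated, and the way you propose to close it cannot work.

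Write $T_j=q^{jk}\qbinom{n-j-1}{k-1}$. Then the left side is $\sum_{j=0}^{m-1}\rho_jT_j$ with
\[
 \rho_j=\frac{\qbinom{n}{1}\qbinom{k-1}{1}}{q^j\qbinom{k}{1}\qbinom{n-j-1}{1}}
 =\Bigl(1-\frac{1}{\qbinom{k}{1}}\Bigr)\Bigl(1+\frac{q^{j+1}-1}{q^n-q^{j+1}}\Bigr).
\]
So each term loses the fraction $1/\qbinom{k}{1}=(q-1)/(q^k-1)$, not about $q^{-k}$. Your own first paragraph has $q\qbinom{k-1}{1}/\qbinom{k}{1}=1-1/\qbinom{k}{1}$, which equals $2/3$ at $q=k=2$, not ``slightly below $1$''.

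Moreover, the losses from all $j=0,\dots,m-1$ accumulate, not only the one at $j=m-1$. Since $T_{j+1}/T_j$ is just below $q$, their total is about $\frac{q}{q-1}\cdot\frac{1}{\qbinom{k}{1}}T_{m-1}=\frac{q}{q^k-1}T_{m-1}$, against a gap of about $T_{m-1}$. At $q=k=2$ this is close to two thirds of the gap. For example, at the admissible $(q,k,m,n)=(2,2,5,20)$ one has $|\calB_q|=16252587$, left side $10835275$ and $\ell_q=7866027$. So the deficit $5417312$ is about $65\%$ of the gap $8386560$. This proportion does not depend on $n$, so no use of $n\ge(2m+1)k-m+3$ can push it below $q^{-3}$. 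The real room is a factor of about $(q^k-1)/q$, which is $3/2$ in the worst case, not $q^k$.

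\textbf{How to repair it.} Use $\rho_j\ge1-1/\qbinom{k}{1}$ and $T_j\le(1+\varepsilon)q^{-(m-1-j)}T_{m-1}$, with $\varepsilon$ exponentially small in $n-m-k$. Summing the losses geometrically gives
\[
 \frac{\qbinom{n}{1}}{\qbinom{k}{1}}D_q(n,k,m)-\ell_q(n,k,m)
 \ge\Bigl(1-\frac{q+\varepsilon}{q^k-1}\Bigr)T_{m-1}-q^{(m-1)k}\ell_q(n-m+1,k,1).
\]
The decisive input is then the $n$-free inequality $q^k-1>q$, with margin $1/3$ at $q=k=2$. Only $\varepsilon$ and the Hilton--Milner remainder need the dimension hypothesis.

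\textbf{Relation to the paper's proof.} Once corrected, your argument is the same comparison as the paper's, organized differently. The paper bounds $D_q(n,k,m)\ge\frac{31}{32}\qbinom{m}{1}\qbinom{n-2}{k-2}$ and $\ell_q(n,k,m)\le\qbinom{m-1}{1}\qbinom{n-1}{k-1}+q^{2m-2}\qbinom{k}{1}\qbinom{n-2}{k-2}+q^{km}$. It then uses exactly the crude factor $\frac{q}{q+1}$ that you discarded as losing too much for small $m$. It does not: the right side is about $\qbinom{m-1}{1}\qbinom{n-1}{k-1}$, not $\qbinom{m}{1}\qbinom{n-1}{k-1}$. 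Since $\frac{q}{q+1}\qbinom{m}{1}>\frac{q^2}{q+1}\qbinom{m-1}{1}\ge\frac43\qbinom{m-1}{1}$, that crude bound already suffices for every $m\ge2$.
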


\begin{proof}
Put
\[
 Q=\qbinom{n-2}{k-2},\quad
 R=\qbinom{n-1}{k-1},\quad
 X=\frac{\qbinom{n}{1}}{\qbinom{k}{1}},\quad
 Y=\frac{R}{Q}=\frac{\qbinom{n-1}{1}}{\qbinom{k-1}{1}},
 \quad c_0=\frac{31}{32}.
\]
Since $n-m-k+2\ge2m(k-1)+5\ge9$, the lower bound in
Lemma~\ref{lem:gaussian-difference}, applied with
$(M,t,r)=(n-1,m,k-1)$, gives
\begin{equation}\label{eq:D-matching-finite-lower}
 D_q(n,k,m)\ge c_0\qbinom{m}{1}Q.
\end{equation}

Iterating \eqref{eq:ell-recurrence} gives
\begin{equation}\label{eq:ell-expanded}
 \ell_q(n,k,m)=
 \sum_{j=0}^{m-2}q^{kj}\qbinom{n-j-1}{k-1}
 +q^{k(m-1)}\ell_q(n-m+1,k,1).
\end{equation}
The definition \eqref{eq:linear-HM-base} gives, for every admissible $N$,
\[
 \ell_q(N,k,1)=
 \qbinom{N-1}{k-1}
 -q^{k(k-1)}\qbinom{N-k-1}{k-1}+q^k.
\]
For $0\le j\le m-2$, comparison of consecutive Gaussian coefficients gives
\[
 q^{kj}\qbinom{n-j-1}{k-1}\le q^jR,
\]
and hence the sum in \eqref{eq:ell-expanded} is at most
$\qbinom{m-1}{1}R$.  Applying the Gaussian difference estimate with
$(M,t,r)=(n-m,k,k-1)$ also gives
\begin{align*}
 q^{k(m-1)}\ell_q(n-m+1,k,1)
 &\le
 q^{k(m-1)}\qbinom{k}{1}\qbinom{n-m-1}{k-2}+q^{km}\\
 &\le q^{2m-2}\qbinom{k}{1}Q+q^{km},
\end{align*}
where in the last step we used
\[
 \frac{\qbinom{n-m-1}{k-2}}{Q}
 \le q^{-(m-1)(k-2)}.
\]
Therefore
\begin{equation}\label{eq:ell-finite-upper}
 \ell_q(n,k,m)
 \le\qbinom{m-1}{1}R
 +q^{2m-2}\qbinom{k}{1}Q+q^{km}.
\end{equation}

Now
\[
 \frac{X}{Y}>
 \frac{1-q^{-(k-1)}}{1-q^{-k}}
 \ge\frac{q}{q+1}.
\]
Since $\qbinom{m}{1}=q\qbinom{m-1}{1}+1$ and
\[
 \frac{31}{32}\frac{q^2}{q+1}-1\ge\frac q8
 \qquad(q\ge2),
\]
we obtain
\[
 c_0\qbinom{m}{1}\frac{X}{Y}-\qbinom{m-1}{1}
 >\frac{q^{m-1}}8.
\]
It follows that the difference between
$c_0\qbinom{m}{1}XQ$ and the first term on the right of
\eqref{eq:ell-finite-upper} is greater than
\[
 \frac{q^{n-k+m-1}}8Q\ge q^{n-k+m-4}Q.
\]
The two remaining terms in \eqref{eq:ell-finite-upper}, after division by
$Q$, are respectively smaller than
\[
 q^{2m+k-2}
 \quad\text{and at most}\quad
 q^{km-(k-2)(n-k)}.
\]
Both exponents are at most $n-k+m-7$: indeed, the two relevant differences
are
\[
 n-2k-m-2\ge3,
 \qquad (k-1)(n-k-m)-4\ge3.
\]
Their sum is therefore smaller than $q^{n-k+m-4}$.  Combining this with
\eqref{eq:D-matching-finite-lower} and \eqref{eq:ell-finite-upper} proves
\eqref{eq:L-greater-ell}.
\end{proof}

\begin{proof}[Proof of Theorem~\ref{thm:ore-matching}]
If $s=2$, the expression in parentheses in
\eqref{eq:ore-matching-bound} equals $\qbinom{n-2}{k-2}$ by the Gaussian
recurrence.  A family with no direct-sum matching of size two is
intersecting, so the conclusion follows from Theorem~\ref{thm:ore-ekr}.

Now let $s\ge3$, put $m=s-1$, and suppose that
$\nu_{\oplus}(\calF)\le m$.  If $\calF$ is
contained in some $\calB_q(V,k;W)$ with $\dim W=m$, then
Observation~\ref{obs:monotone} and Proposition~\ref{prop:B-degree} give
\[
 \sigma(\calF)\le\qbinom{k}{1}D_q(n,k,m),
\]
a contradiction with \eqref{eq:ore-matching-bound}.

If $\calF$ is not contained in such a family, then
Theorem~\ref{thm:linear-stability} gives
\[
 |\calF|\le\ell_q(n,k,m)
\]
provided $n\ge(2m+1)k-m+3$.  On the other hand,
Lemma~\ref{lem:ore-size} and \eqref{eq:ore-matching-bound} give
\[
 |\calF|>
 \frac{\qbinom{n}{1}}{\qbinom{k}{1}}D_q(n,k,m),
\]
contradicting Lemma~\ref{lem:matching-separation}.  Hence
$\nu_{\oplus}(\calF)\ge s$.

The sharpness statement follows from Proposition~\ref{prop:B-degree} with
$\dim W=s-1$.
\end{proof}

\begin{proof}[Proof of Corollary~\ref{cor:ramsey}]
If $M=0$, then every $a_i=1$.  The inequality \eqref{eq:ramsey-ore}
reduces to $\sigma(\calF)>0$, so $\calF$ is nonempty and the conclusion is
immediate.  Assume $M\ge1$ and apply Theorem~\ref{thm:ore-matching} with
$s=M+1$.  The family $\calF$ contains a direct-sum matching of size $M+1$.
If colour $i$ occurred at most $a_i-1$ times on this matching for every $i$,
then the matching would have at most
\[
 \sum_{i=1}^{c}(a_i-1)=M
\]
members, a contradiction.  Thus some colour $i$ occurs on at least $a_i$
members, which form the required monochromatic direct-sum matching.
\end{proof}

There is a second natural obstruction outside the stability range.  Let $H$
be an $(sk-1)$-subspace and put $\calF=\vbinom{H}{k}$.  Then
$\nu_{\oplus}(\calF)\le s-1$.  Points in $H$ have degree
$\qbinom{sk-2}{k-1}$ in $\calF$, whereas points outside $H$ have degree
zero.  The minimum possible dimension of $S\cap H$ over nonmembers
$S\in\vbinom{V}{k}$ is $\max\{0,(s+1)k-n-1\}$.  Consequently,
\begin{equation}\label{eq:clique-barrier-ore}
 \sigma_{k,q}(\calF)=
 \qbinom{\max\{0,(s+1)k-n-1\}}{1}
 \qbinom{sk-2}{k-1}.
\end{equation}
In particular, a conjecture using only the threshold
\eqref{eq:ore-matching-bound} is false in the full range.  For example, when
$(q,k,s,n)=(2,3,4,13)$, the value in
\eqref{eq:clique-barrier-ore} is $174251$, whereas the right-hand side of
\eqref{eq:ore-matching-bound} is $100205$.  The two standard matching
barriers suggest the following corrected formulation.

\begin{conj}
\label{conj:full-range-matching}
Let $q$ be a prime power, $k\ge2$, $s\ge2$, and $n\ge ks$.  If
$\calF\subsetneq\vbinom{V}{k}$ satisfies
\[
 \sigma_{k,q}(\calF)>
 \max\left\{\qbinom{k}{1}D_q(n,k,s-1),
 \qbinom{\max\{0,(s+1)k-n-1\}}{1}
 \qbinom{sk-2}{k-1}\right\},
\]
then
$\nu_{\oplus}(\calF)\ge s$.
\end{conj}

\section{Concluding remarks}\label{sec:conclusion}

The projective Ore-degree in \eqref{eq:ore-definition} is a robust
Grassmannian analogue of the vertex Ore-degree for uniform hypergraphs.  The
incidence inequality \eqref{eq:ore-size} is exact and requires no asymptotic
hypothesis.  Theorem~\ref{thm:ore-hm-sharp} gives the exact
Ore--Hilton--Milner threshold for \(q\ge3,n\ge2k+1\), and for
\(q\ge2,n\ge2k+2\), by combining that inequality with large-family
structure, boundary covering-number estimates, disjoint-subspace averaging,
and finite Gaussian comparisons.  Its equality classification also exhibits
a precise rigidity--flexibility dichotomy: all members using points outside
the distinguished core space are forced, whereas the internal layer can be
thinned arbitrarily subject to nontriviality.

The distinction between partial-spread matchings and direct-sum matchings is
structural rather than cosmetic.  If a matching were defined only by pairwise
trivial intersections, the family of all $k$-subspaces meeting a fixed
$(s-1)$-subspace would no longer have matching number at most $s-1$: many
pairwise disjoint points can lie in a low-dimensional projective space.  In
that setting, unions of point-stars depend on the rank function of the centre
configuration, and the set-system threshold in Theorem~\ref{thm:ore-matching}
does not survive by a mechanical replacement of binomial coefficients with
Gaussian coefficients.  The direct-sum definition restores the rank-additive
feature of disjoint sets and produces the canonical construction
\eqref{eq:B-family}.

Two natural problems remain within the scope of the present paper.  The first
is the binary critical case recorded in
Remark~\ref{rema:hm-binary-boundary}: the structural theorem of Ihringer and
Kupavskii reaches $n=2k+1$, but a sharper treatment of its remainder is still
needed when \(q=2\).  The second is to lower the dimension threshold in
Theorem~\ref{thm:ore-matching}, ultimately toward the two-barrier
Conjecture~\ref{conj:full-range-matching}.

\begin{rema}[Further extensions]
The projective incidence method suggests three broader directions that are
not pursued here.  First, one may seek sharp product bounds for the
projective Ore-degrees of cross-intersecting families.  Second, one may
replace point degrees by degrees of $t$-subspaces and develop a higher-order
$t$-Ore theory for $t$-intersecting families; a suitable nondegenerate local
parameter and its sharp extremal constructions would have to be identified.
Third, one may formulate rainbow versions for several properly coloured
families and ask for projective Ore conditions forcing a rainbow direct-sum
matching.  A sharp result in the last direction is likely to require a
rainbow stability theorem for direct-sum matchings.
\end{rema}
\section*{Acknowledgement}
The authors acknowledge the use of AI tools during the exploratory stage of this project. All mathematical arguments and proofs in the final manuscript were checked and written by the authors.

\appendix
\refstepcounter{section}
\section*{Appendix~\thesection.\ Large maximal intersecting families for \(k=3\)}
\label{app:k3-classification}

For completeness, we record the classification of large maximal intersecting
families of \(3\)-subspaces due to De Boeck \cite{DeBoeck}, in the condensed
vector-space form given in \cite{IhringerKupavskii}.  Throughout
this appendix, an unqualified \(F\) ranges over \(\vbinom{V}{3}\).

\begin{thm}[De Boeck]\label{thm:k3-classification}
Let \(n\ge6\), and let
\(\calF\subseteq\vbinom{V}{3}\) be a maximal intersecting family satisfying
\[
 |\calF|\ge 3q^4+3q^3+2q^2+q+1.
\]
Then, up to isomorphism, \(\calF\) is one of the following eleven types.
\end{thm}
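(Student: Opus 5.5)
This statement is an external classification, so the plan is to import it rather than re-derive it. The first step is to take De Boeck's theorem \cite{DeBoeck} in its original projective language: it concerns intersecting families of planes of $\mathrm{PG}(n-1,q)$, which are exactly the $3$-subspaces of $V$. I would shift every projective dimension by one to obtain the vector-space version. Next I would check that De Boeck's size hypothesis, after this translation, is the bound $3q^4+3q^3+2q^2+q+1$ stated here, and that his range of ambient dimensions covers $n\ge6$. Finally I would compare his list entry by entry with the condensed eleven-type list of \cite{IhringerKupavskii}. For each type I would record a defining configuration (a point, a line, a solid, a $5$-space, and so on) and the exact size as a function of $n$, since these are the quantities used in the proof of Theorem~\ref{thm:ore-hm-sharp}.

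If a self-contained argument were wanted instead, I would organize it by the covering number $\tau(\calF)\in\{1,2,3\}$, in the style of \cite[Lemmas~3.3--3.8]{CaoLvWangZhou} specialized to $k=3$, $t=1$. The case $\tau=1$ gives the point-star, which is Type~1. For $\tau=2$, I would let $\calT$ be the family of $2$-dimensional covers and split according to the configuration of $\calT$:
\begin{enumerate}[\rm(a)]
\item $\calT$ has a common point and spans a $3$-space; this gives $\calH_1$, Type~2.
\item $\calT$ has no common point; this gives $\calH_3(Z)$, Type~3.
\item $\calT$ consists of a single line, or of lines spanning a larger space; these give the remaining $\tau=2$ types of the list.
\end{enumerate}
In each case, maximality would be used to identify the family uniquely. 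For $\tau=3$, every member must meet each of three members that span at most a $9$-space. A careful count bounds $|\calF|$ by a constant depending only on $q$, and comparison with the threshold then leaves only the families living in a bounded-dimensional space, such as $\vbinom{Y}{3}$ with $\dim Y=5$ (Type~10) and the Fano-type configurations.

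I expect the $\tau=3$ analysis to be the main obstacle. The crude bound $\qbinom{3}{1}^3$ exceeds $3q^4+\cdots$, so the threshold does not dispose of this case by counting alone. One needs De Boeck's finer geometric arguments, namely the study of the lines meeting many members and of the dual configurations inside a $5$-space, to pin down the exact list. For that reason I would rely on the citation for this part and verify only the translation and the size formulas.
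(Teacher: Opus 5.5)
Your main route is the paper's route: you translate De Boeck's projective-space classification and match it type by type with the condensed list of Ihringer--Kupavskii, while the paper gives no proof of its own and simply records the cited result. In your optional sketch, note that the $2$-dimensional covers of Type~2 $=\calH_1(P,P+Y)$ are the lines through $P$ in the $4$-space $P+Y$ (a common point with span a $3$-space gives Type~4 instead), and that the $\tau=3$ types on the list are Types~10 and~11 rather than any ``Fano-type'' configuration.
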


\begin{enumerate}[\rm(1)]
\item
For a point \(P\in\vbinom{V}{1}\), let
\[
 \calF=\{F:P\le F\}.
\]
Then
\[
 |\calF|=\qbinom{n-1}{2}.
\]

\item
Let \(P\in\vbinom{V}{1}\) and \(Y\in\vbinom{V}{3}\) satisfy
\(P\cap Y=\{0\}\).  Let
\[
 \calF=
 \{F:P\le F,\ \dim(F\cap Y)\ge1\}
 \cup
 \{F:Y\le P+F\}.
\]
Then
\[
 |\calF|=
 \qbinom{3}{1}\qbinom{n-2}{1}
 -q\qbinom{2}{1}.
\]

\item
For \(X\in\vbinom{V}{3}\), let
\[
 \calF=\{F:\dim(F\cap X)\ge2\}.
\]
Then
\[
 |\calF|=
 \qbinom{3}{1}\qbinom{n-2}{1}
 -q\qbinom{2}{1}.
\]

\item
Let \(P\in\vbinom{V}{1}\), \(X\in\vbinom{V}{3}\), and
\(Y\in\vbinom{V}{5}\) satisfy \(P\le X\le Y\).  Let
\[
 \begin{split}
 \calF={}&
 \{F:P\le F\le Y\}\\
 &{}\cup\{F:P\le F,\ \dim(F\cap X)=2\}\\
 &{}\cup\{F:F\le Y,\ \dim(F\cap X)=2\}.
 \end{split}
\]
Then
\[
 |\calF|=
 \qbinom{2}{1}\qbinom{n-2}{1}
 +2q^4+q^3-q.
\]

\item
Let \(L\in\vbinom{V}{2}\) and \(Y\in\vbinom{V}{5}\) satisfy \(L\le Y\).
Let
\[
 \calF=
 \{F:L\le F\}
 \cup
 \{F:F\le Y,\ \dim(F\cap L)=1\}.
\]
Then
\[
 |\calF|=
 \qbinom{n-2}{1}
 +q^2\qbinom{2}{1}\qbinom{3}{1}.
\]

\item
Let \(P_1,P_2\in\vbinom{V}{1}\) be distinct, \(L\in\vbinom{V}{2}\),
\(X_1,X_2\in\vbinom{V}{3}\), \(Y\in\vbinom{V}{4}\), and
\(Z_1,Z_2\in\vbinom{V}{5}\), where
\[
 P_1,P_2\le L=X_1\cap X_2,
 \qquad
 X_1,X_2\le Y=Z_1\cap Z_2.
\]
Let
\[
 \begin{split}
 \calF={}&
 \{F:L\le F\}\cup\{F:F\le Y\}\\
 &{}\cup
 \bigcup_{i=1}^{2}
 \{F:P_i\le F\le Z_1,\ \dim(F\cap X_i)\ge2\}\\
 &{}\cup
 \bigcup_{\{i,j\}=\{1,2\}}
 \{F:P_i\le F\le Z_2,\ \dim(F\cap X_j)\ge2\}.
 \end{split}
\]
Then
\[
 |\calF|=\qbinom{n-2}{1}+5q^3+q^2.
\]

\item
Let \(L\in\vbinom{V}{2}\), \(X\in\vbinom{V}{4}\), and
\(Y\in\vbinom{V}{6}\) satisfy \(Y=L\oplus X\).  Choose distinct points
\(P_1,P_2,P_3\le L\) and points \(Q_1,Q_2,Q_3,Q_4\le X\) satisfying
\(X=Q_1\oplus Q_2\oplus Q_3\oplus Q_4\), and put
\[
 \begin{array}{lll}
 L_1=Q_1+Q_2,&L_2=Q_1+Q_3,&L_3=Q_1+Q_4,\\
 \overline L_1=Q_3+Q_4,&
 \overline L_2=Q_2+Q_4,&
 \overline L_3=Q_2+Q_3.
 \end{array}
\]
Let
\[
 \begin{split}
 \calF={}&
 \{F:L\le F\}\\
 &{}\cup
 \bigcup_{i=1}^{3}\{F:P_i\le F\le L+L_i\}\\
 &{}\cup
 \bigcup_{i=1}^{3}\{F:P_i\le F\le L+\overline L_i\}.
 \end{split}
\]
Then
\[
 |\calF|=\qbinom{n-2}{1}+6q^2.
\]

\item
Let \(P_1,P_2\in\vbinom{V}{1}\) be distinct, \(L\in\vbinom{V}{2}\), and
\(X_1,X_2\in\vbinom{V}{4}\), where
\[
 P_1,P_2\le L=X_1\cap X_2.
\]
Let
\[
 \begin{split}
 \calF={}&
 \{F:L\le F\}\\
 &{}\cup
 \{F:P_1\le F,\ \dim(F\cap X_1)\ge2,\
                     \dim(F\cap X_2)\ge2\}\\
 &{}\cup\{F:P_2\le F\le X_1\}
 \cup\{F:P_2\le F\le X_2\}.
 \end{split}
\]
Then
\[
 |\calF|=\qbinom{n-2}{1}+q^4+2q^3+3q^2.
\]

\item
Let \(P_1,P_2\in\vbinom{V}{1}\) be distinct, \(L\in\vbinom{V}{2}\), and
\(X\in\vbinom{V}{4}\), where \(P_1,P_2\le L\) and \(L\cap X=\{0\}\).
Let
\[
 R_1,\ldots,R_{q+1},R'_1,\ldots,R'_{q+1}\in\vbinom{X}{2}
\]
satisfy
\[
 \dim(R_i\cap R'_j)=1\quad\text{for all }i,j,
\]
and
\[
 R_i\cap R_j=R'_i\cap R'_j=\{0\}
 \quad\text{whenever }i\ne j.
\]
Thus \(\{R_1,\ldots,R_{q+1}\}\) is a regulus and
\(\{R'_1,\ldots,R'_{q+1}\}\) is its opposite regulus.  Let
\[
 \begin{split}
 \calF={}&
 \{F:L\le F\}\\
 &{}\cup
 \bigcup_{i=1}^{q+1}\{F:P_1\le F\le L+R_i\}\\
 &{}\cup
 \bigcup_{i=1}^{q+1}\{F:P_2\le F\le L+R'_i\}.
 \end{split}
\]
Then
\[
 |\calF|=\qbinom{n-2}{1}+2q^2\qbinom{2}{1}.
\]

\item
For \(X\in\vbinom{V}{5}\), let
\[
 \calF=\{F:F\le X\}.
\]
Then
\[
 |\calF|=\qbinom{5}{3}.
\]

\item
Let \(L_1,L_2\in\vbinom{V}{2}\), \(X\in\vbinom{V}{4}\), and
\(Y_1,Y_2\in\vbinom{V}{5}\), where
\[
 L_1,L_2\le X=Y_1\cap Y_2,
 \qquad
 L_1\cap L_2=\{0\}.
\]
Let
\[
 \begin{split}
 \calF={}&
 \{F:F\le X\}
 \cup\{F:L_1\le F\le Y_1\}
 \cup\{F:L_2\le F\le Y_1\}\\
 &{}\cup
 \{F:F\le Y_2,\ \dim(F\cap L_1)\ge1,\
                    \dim(F\cap L_2)\ge1\}.
 \end{split}
\]
Then
\[
 |\calF|=q^4+3q^3+4q^2+q+1.
\]
\end{enumerate}

In the notation of the main text, Types \(1\), \(2\), and \(3\) are,
respectively, a full point-star, \(\calH_1(P,P+Y)\), and \(\calH_3(X)\).

\begin{thebibliography}{99}

\bibitem{BaloghPalmerRaeisi}
J. Balogh, C. Palmer and G. Raeisi,
\newblock Matchings in hypergraphs via Ore-degree conditions,
\newblock arXiv:2603.06415, 2026.

\bibitem{BlokhuisEtAl}
A. Blokhuis, A. E. Brouwer, A. Chowdhury, P. Frankl, T. Mussche,
B. Patk\'os and T. Sz\H{o}nyi,
\newblock A Hilton--Milner theorem for vector spaces,
\newblock {\em Electron. J. Combin.} 17 (2010), R71.

\bibitem{CaoLvWangZhou}
M. Cao, B. Lv, K. Wang and S. Zhou,
\newblock Nontrivial $t$-intersecting families for vector spaces,
\newblock {\em SIAM J. Discrete Math.} 36 (2022), 1823--1847.

\bibitem{DeBoeck}
M. De Boeck,
\newblock The largest Erd\H{o}s--Ko--Rado sets of planes in finite
projective and finite classical polar spaces,
\newblock {\em Des. Codes Cryptogr.} 72 (2014), 77--117.

\bibitem{ErdosKoRado}
P. Erd\H{o}s, C. Ko and R. Rado,
\newblock Intersection theorems for systems of finite sets,
\newblock {\em Quart. J. Math. Oxford Ser. (2)} 12 (1961), 313--320.

\bibitem{FengShangguanYangZhang}
B. Feng, C. Shangguan, Y. Yang and C. Zhang,
\newblock An Erd\H{o}s matching conjecture for vector spaces,
\newblock arXiv:2606.24529, 2026.

\bibitem{FranklWilson}
P. Frankl and R. M. Wilson,
\newblock The Erd\H{o}s--Ko--Rado theorem for vector spaces,
\newblock {\em J. Combin. Theory Ser. A} 43 (1986), 228--236.

\bibitem{FranklHanHaoYi}
P. Frankl, J. Han, H. Hao and Z. Yi,
\newblock A degree version of the Hilton--Milner theorem,
\newblock {\em J. Combin. Theory Ser. A} 155 (2018), 493--502.

\bibitem{GodsilMeagher}
C. Godsil and K. Meagher,
\newblock {\em Erd\H{o}s--Ko--Rado Theorems: Algebraic Approaches},
\newblock Cambridge Studies in Advanced Mathematics 149,
Cambridge University Press, Cambridge, 2016.

\bibitem{Hsieh}
W. N. Hsieh,
\newblock Intersection theorems for systems of finite vector spaces,
\newblock {\em Discrete Math.} 12 (1975), 1--16.

\bibitem{HiltonMilner}
A. J. W. Hilton and E. C. Milner,
\newblock Some intersection theorems for systems of finite sets,
\newblock {\em Quart. J. Math. Oxford Ser. (2)} 18 (1967), 369--384.

\bibitem{HuangZhao}
H. Huang and Y. Zhao,
\newblock Degree versions of the Erd\H{o}s--Ko--Rado theorem and
Erd\H{o}s hypergraph matching,
\newblock {\em J. Combin. Theory Ser. A} 150 (2017), 233--247.

\bibitem{IhringerKupavskii}
F. Ihringer and A. Kupavskii,
\newblock Structure of $t$-intersecting families of vector spaces,
\newblock arXiv:2605.02698, 2026.

\bibitem{Ore}
O. Ore,
\newblock Note on Hamilton circuits,
\newblock {\em Amer. Math. Monthly} 67 (1960), 55.

\bibitem{ShanZhouDegree}
Y. Shan and J. Zhou,
\newblock $d$-degree Erd\H{o}s--Ko--Rado theorem for finite vector spaces,
\newblock {\em Acta Math. Hungar.} 176 (2025), 215--235.

\bibitem{WangXuZhang}
J. Wang, A. Xu and H. Zhang,
\newblock A Kruskal--Katona-type theorem for graphs: $q$-Kneser graphs,
\newblock {\em J. Combin. Theory Ser. A} 198 (2023), 105766.

\end{thebibliography}
\end{document}